\numberwithin{equation}{section}
\theoremstyle{plain}
\newtheorem{prop}{Proposition}[section]
\newtheorem{coro}[prop]{Corollary}
\newtheorem{lemm}[prop]{Lemma}
\newtheorem{theorem}[prop]{Theorem}
\theoremstyle{definition}
\newtheorem{defi}[prop]{Definition}
\newtheorem{exam}[prop]{Example}
\newtheorem{rema}[prop]{Remark}
\renewcommand\aa{a}
\newcommand\AAA{\mathcal{A}}
\newcommand\bb{b}
\newcommand\BB{B}
\newcommand\MB{\mathcal{B}}
\newcommand\BC{\mathsf{BC}}
\newcommand\cc{c}
\newcommand\CCC{C}
\newcommand\cont{\mathsf{cont}}
\newcommand\dd{d}
\newcommand\ff{f}
\renewcommand\gg{g}
\newcommand\GK{Gelfand-Kirillov dimension}
\newcommand\gkd[1]{\mathsf{GKdim}(#1)}
\newcommand{\gb}{Gr\"{o}bner basis}
\newcommand\gbs{Gr\"{o}bner bases}
\newcommand{\gsb}{Gr\"{o}bner--Shirshov basis}
\newcommand\gsbs{Gr\"{o}bner--Shirshov bases}
\newcommand\hh{h}
\newcommand\HS[1]{\leavevmode\null\hspace{#1mm}}
\newcommand\id{\mathsf{Id}}
\newcommand\Id[1]{\id(#1)}
\newcommand\ii{i}
\newcounter{ITEM}
\newcommand\ITEM[1]{\setcounter{ITEM}{#1}\leavevmode\hbox{\rm(\roman{ITEM})}}
\newcommand\Irr{\mathsf{Irr}}
\newcommand\jj{j}
\newcommand\kk{k}
\newcommand\LLL{L}
\newcommand\lcm{\mathsf{lcm}}
\newcommand\Lnormed[1]{[#1]{_{_\mathsf{L}}}}
\newcommand\mB{\mathcal{B}}
\newcommand\minn{\mathsf{min}}
\newcommand\mm{m}
\newcommand\MMM{M}
\newcommand\nn{n}
\newcommand\NNN{N}
\newcommand\NF{\mathsf{NF}}
\newcommand\ov[1]{\overline{#1}}
\newcommand\qq{q}
\newcommand\pdots{\mathrel{\HS{0.2}{\cdot}{\cdot}{\cdot}\HS{0.2}}}
\newcommand\pp{p}
\newcommand\rr{r}
\newcommand\RRR{R}
\newcommand\resp{{\it resp.}}
\newcommand\Rnormed[1]{[#1]{_{_\mathsf{R}}}}
\renewcommand\ss{s}
\renewcommand\SS{S}
\newcommand{\sm}{\setminus}
\newcommand\Span{\mathsf{span}}
\newcommand\Sd{\mathsf{Sdeg}}
\renewcommand\tt{t}
\newcommand\TT{T}
\newcommand\uu{u}
\newcommand\UUU{U}
\newcommand\vv{v}
\newcommand\VVV{V}
\newcommand\wdots{, ...\HS{0.2}, }
\newcommand{\wt}{\mathsf{wt}}
\newcommand\ww{w}
\newcommand\xx{x}
\newcommand\XXX{X}
\newcommand\YYY{Y}
\newcommand\yy{y}
\newcommand\zz{z}
\title{Gelfand-Kirillov dimension of bicommutative algebras}
\author{Yuxiu Bai}
\address{Y.B., School of Mathematical Sciences, South China Normal University Guangzhou 510631, P. R. China}
\email{\small 550291315@qq.com}
\author{Yuqun Chen}
\address{Y.C., School of Mathematical Sciences, South China Normal University, Guangzhou 510631, P. R. China}
\email{yqchen@scnu.edu.cn}
\author{Zerui Zhang$^{\sharp}$}
\address{Z.Z., School of Mathematical Sciences, South China Normal University, Guangzhou 510631, P. R. China}
\email{\small zeruizhang@scnu.edu.cn}
\thanks{${}^{\sharp}$ Corresponding author}
\keywords{commutative algebra; bicommutative algebra; \GK; \gb; \gsb}
\begin{document}
\renewcommand{\thefootnote}{\fnsymbol{footnote}}
\footnotetext{2020 \emph{Mathematics Subject Classification.} 13P10, 16S15, 16P90, 17A30, 17A61.}
\renewcommand{\thefootnote}{\arabic{footnote}}
\begin{abstract}
We first offer a fast method for calculating the Gelfand-Kirillov dimension of a finitely presented commutative algebra by investigating certain finite set.
Then we establish  a \gsbs\ theory for bicommutative algebras,  and
show that every finitely generated bicommutative algebra has a finite \gsb.
As an application, we show that the \GK\ of a finitely generated bicommutative algebra is a nonnegative integer.
\end{abstract}
\maketitle
\section{Introduction}
The Gelfand-Kirillov dimension measures the asymptotic rate of  growth of algebras. And it has become an important tool in the study of algebras. There are many well-known results on the \GK s of algebras. For instance, the Gelfand-Kirillov dimension of a finitely generated commutative algebra~$\AAA$ over a field is the classical Krull dimension of~$\AAA$~\cite[Theorem 4.5]{Bell}, in particular, it is a nonnegative integer.

 There exist a lot of important results concerning the Gelfand-Kirillov dimensions of algebras. Bergman's gap Theorem~\cite{Bergman} shows that there is no algebra with \GK\ strictly between one and two.
Martinez and Zelmanov\cite{Jordan-GK1} studied  finitely generated linear Jordan algebras of \GK\ one and showed that the gap theorem holds for Jordan algebras.
Petrogradsky and Shestakov \cite{Jordan-super-growth} showed that the Gelfand-Kirillov dimension of a Jordan superalgebra can be an arbitrary number in $\{0\}\cup[1,+\infty]$.
The \GK\ of a Leavitt path algebra was investigated by Alahmadi et al. in~\cite{leavitt}.
Qi et al. \cite{nonsymmetric-operad} proved that no finitely generated locally finite nonsymmetric operad has Gelfand-Kirillov dimension strictly between 1 and 2.
And  Bao et al. \cite{symmetric-operad} classified  $\kk$-linear  symmetric operads of low Gelfand-Kirillov dimension.  Petrogradsky, Shestakov, Zelmanov \cite{nil-lie1} and
Shestakov, Zelmanov~\cite{nil-lie2} investigated the Gelfand-Kirillov dimensions of a family of infinite-dimensional nil Lie algebras. Drensky and Papistas~\cite{GKrelative} showed that, under some natural restrictions, the Gelfand-Kirillov dimension of the group of tame automorphisms of relatively free algebras $F_n(V)$ is equal to the
Gelfand-Kirillov dimension of the algebra $F_n(V)$.
Centrone \cite{GKPI} proved a strict relation between the Gelfand-Kirillov dimension of the relatively free (graded) algebra of a PI-algebra and its (graded) exponent.
Zhao and Zhang \cite{zhao2016} offered an algorithm for computing the Gelfand-Kirillov dimension of certain type of differential difference modules.

Recall that
an algebra $\mB$ over a field $\kk$ is called {\em bicommutative} if it satisfies the identities
\begin{eqnarray*}
x(yz)&=&y(xz)\ \ \ \ \mbox{(left commutativity)} \\
(xy)z&=&(xz)y\ \ \ \ \mbox{(right commutativity)}
\end{eqnarray*}
for all $x,y,z\in \mB$.  We denote by~$\BC(\XXX)$ the free bicommutative algebra
generated by~$\XXX$. In~\cite{bicom-basis}, Dzhumadil'daev,  Ismailov and Tulenbaev constructed a linear basis for~$\BC(\XXX)$ when~$\XXX$ is finite, which can be easily generalized to the case when~$\XXX$ is an arbitrary linearly ordered set, see also~\cite{bicom-variety}.
Drensky and Zhakhayev~\cite{bicom-variety} proved that every finitely generated bicommutative algebra is weakly noetherian and offered a positive solution to the finite basis problem for varieties of bicommutative algebras over an arbitrary field of any characteristic.  Classifications on low dimensional bicommutative algebras also attracts peoples' attraction, and it is proved that there are only three  pairwise nonisomorphic two-dimensional bicommutative algebras over an algebraically closed field by Drensky~\cite{classification}.
Kaygorodov,  P\'aez-Guill\'an and Voronin \cite{Ivan1} offered the algebraic and geometric classification of 4-dimensional nilpotent
bicommutative algebras.
Dzhumadil'daev and Ismailov \cite{Jordanelement} established criterions for elements of a free bicommutative algebra to be Lie or Jordan.

We continue to study bicommutative algebras and show that the Gelfand-Kirillov dimension of an arbitrary finitely generated bicommutative algebra is a nonnegative integer.
Moreover, we show that, in order to calculate the Gelfand-Kirillov dimension of a finitely generated bicommutative algebra, it suffices to investigate a finite specific set instead of constructing a possibly infinite linear basis for the bicommutative algebra under consideration.  The method we applied for the proof consists of using Gr\"obner bases and Gr\"obner--Shirshov bases, as introduced by Buchberger~\cite{Buth,Bupaper} and Shirshov~\cite{shirshov1962,shirshov-selectedwork}.
For more history on \gbs\ and \gsbs\ theory, we refer to \cite{survey}.

The paper is organized as follows. In Section~\ref{sec-gk-defi}, we recall the \gbs\ theory for commutative algebras and offer a fast method for calculating  the \GK \ of a finitely generated commutative algebra.  In Section~\ref{gsb}, we establish a \gsbs\ theory for bicommutative algebras and show that every finitely generated bicommutative algebra has a finite \gsb. In Section~\ref{sec-main-result}, we apply the method of \gsbs\ to prove that the \GK\ of a finitely generated bicommutative algebra is a nonnegative integer.  Moreover,  in Theorem~\ref{GKBC} we offer a fast method for calculating the Gelfand-Kirillov dimension of a finitely presented bicommutative algebra~$\mB$ by investigating certain finite set in~$\mB$.

\section{\GK\ of commutative algebras}\label{sec-gk-defi}
Our aim in this section is to offer a new proof for the known fact that the \GK\ of a finitely generated commutative algebra is a nonnegative integer by using the \gbs\ theory for commutative algebras.

Let~$(\XXX,\prec)$ be a fixed well-ordered set and let~$\mathbb{N}$ (\resp\ $\mathbb{N^+}$) be the set of all nonnegative integers (\resp\ all positive integers). Furthermore, we denote by~$[\XXX]$ (\resp\ $[\XXX]^+$) the free commutative monoid (\resp\ free commutative semigroup) generated by~$\XXX$, that is
$$[\XXX]=\{\xx_{i_1}^{\nn_{i_1}}\cdots\xx_{i_t}^{\nn_{i_t}}\mid \xx_{i_1},\dots,\xx_{i_t}\in\XXX,\ \xx_{i_1}\prec\cdots\prec\xx_{i_t},  t,\nn_{i_1}\wdots \nn_{i_t} \in \mathbb{N}\} $$
and $[\XXX]^+=[\XXX]\setminus \{\varepsilon\}$, where~$\xx^0=\varepsilon$ for every~${\xx\in \XXX}$ and~$\varepsilon$ is the unit of the monoid~$[\XXX]$.
When we write a monomial $\uu=\xx_{i_1}^{\nn_{i_1}}\cdots\xx_{i_t}^{\nn_{i_t}}$, we always assume that all the letters~$\xx_{i_1}\wdots\xx_{i_t}$ are pairwise different unless otherwise specified.
Finally, we denote by~$\kk[\XXX]$ the free commutative algebra generated by~$\XXX$, and for an arbitrary set~$\YYY$, we denote by~$\#\YYY$ the cardinality of~$\YYY$.

Now we recall the \gbs\ theory for commutative algebras, for more details, see~\cite{Buth,gsbook1}.  For every monomial~$u=\xx_{i_1}^{\nn_{i_1}}\cdots\xx_{i_t}^{\nn_{i_t}}\in [\XXX]$,  we define the \emph{length} of $\uu$, denoted by~$\ell(\uu)$, to be~$\sum_{j=1}^{t}\nn_{i_j}$.
Let $<$ be a monomial order on $[\XXX]$, that is,  $<$ is a well order and for all monomials~$u,v,w\in[X]$, we have $u<v\Rightarrow wu<wv$.
For every nonzero polynomial~$\ff=\sum \alpha_{i}\uu_i $ in~$\kk[\XXX]$, where each~$0\neq \alpha_i$ lies in~$\kk$, each~$\uu_i$ lies in~$[\XXX]$ and~$\uu_1>\uu_2>\cdots$,
we call~$\uu_1$ the \emph{leading monomial} of~$\ff$ and denote it by~$\bar\ff$.
We call~$\ff$ a \emph{monic polynomial} if~$\alpha_1=1$. And a set $\SS\subseteq\kk[\XXX]$ is called a \emph{monic set} if every polynomial in $\SS$ is monic.   By convention, the empty set~$S$ is a monic set but~$\{0\}$ is not. We denote by~$\Id\SS$ the ideal of~$\kk[\XXX]$ generated by $\SS$ and define~$\overline{S}=\{\overline{f}\mid f\in S\}$.

Let~$I$ be an ideal of~$\kk[\XXX]$. A monic set~$\SS$ is called a \emph{\gb}   (in~$k[\XXX]$) for the quotient algebra~$\kk[\XXX]/I$ if~$\Id\SS=I$ and for every nonzero polynomial~$\ff\in I$,
we have~${\overline {\ff}=\overline{\ss\vv}}$ for some~$\ss\in\SS$, $\vv\in[\XXX]$. By the \gbs\ theory for commuative algebras~\cite{Buth,gsbook1}, if~$S$ is a \gb\ for the algebra~$\kk[\XXX | \SS]:=\kk[\XXX]/\Id\SS$, then the set
$$\Irr(\SS)=[\XXX]\setminus\{\ww\in [\XXX]\mid \ww=\overline{\ss\vv}\ \mbox{for some}\ \ss\in\SS,\vv\in[\XXX]\}$$
forms  a linear basis for the algebra $\kk[\XXX | \SS]$.  In particular, if~$S$ is  empty, then~$S$ is a \gb\ for~$\kk[\XXX | \SS]=\kk[\XXX]$ and  we have~$\Irr(\SS)=[X]$.

Now we recall the definition of  the  \GK s for associative algebras. Let~$\VVV$ be a finite dimensional subspace of an associative algebra $\AAA$ with a linear basis $\{\aa_1\wdots\aa_{\rr}\}$. Define $\VVV^1=\VVV$, and for $n\geq 1$, denote by~$\VVV^{n}$ the subspace of~$\AAA$ spanned by all monomials of the form $\aa_{i_1}\cdots\aa_{i_n}$, where $\aa_{i_1}\wdots\aa_{i_n}\in\{\aa_1\wdots\aa_{\rr}\}$.  Finally, define $\VVV^{\leq n}=\VVV^{1}+\cdots+\VVV^{n}$.

\begin{defi}\cite{book}\label{assgkdefi}
Let~$\AAA$ be an associative algebra over a field~$\kk$. Then the \GK\ of~$\AAA$ is defined to be
$$ \gkd\AAA=\sup\limits_{\VVV}\overline{\lim\limits_{\nn \to \infty}}\log_{\nn}\mathsf{dim}(\VVV^{\leq\nn}),$$
where the supremum is taken over all finite dimensional subspaces of~$\AAA$.
\end{defi}
It is well-known that if~$\AAA$ is an associative algebra generated by a finite set~$\XXX$, then we have~$$\gkd\AAA=\overline{{\lim\limits_{\nn\to \infty}}}\log_{\nn}
\mathsf{dim}(  (\kk\XXX)^{\leq\nn}).$$
In particular,  for a commutative algebra~$\AAA=\kk[\XXX|\SS]$, where $\XXX$ is a finite set and $\SS$ is a \gb\ for $\AAA$, we have
$$\gkd\AAA=\overline{{\lim\limits_{\nn\to \infty}}}\log_{\nn}
\#\{\uu\in\Irr(\SS)\mid \ell(\uu)\leq\nn\}.$$

Obviously, for all commutative algebras,  their Gelfand-Kirillov dimensions have very close relations with their \gbs. Now we have a close look at the case of finitely generated commutative algebras.  It is well-known that every finitely generated commutative algebra has a finite \gb\  by using Hilbert's Theorem, see also~\cite[Sect. 1.6]{gsbook1}.

We recall that for all monomials~$\uu,\vv\in [\XXX]$, the monomial $u$ is called a \emph{multiple} of~$v$, denoted by~$v|u$, if~$\uu=\vv\ww$ in $[\XXX]$ for some monomial~$w\in [\XXX]$.
Let~$\XXX=\{\xx_1,\dots,\xx_\rr\}$ be a nonempty finite set and let~$\SS$ be a  finite \gb\ for~$k[X|S]$.
Define
\begin{eqnarray*}
\CCC_{\SS}=
\begin{cases}
0,& \mbox{if } \SS=\emptyset\\
\min\{\nn\in \mathbb{N}\mid \bar{\ss}|\xx_1^n\cdots\xx_{\rr}^n,\ss\in \SS\},& \mbox{if } \SS\neq\emptyset.
\end{cases}
\end{eqnarray*}
 In particular, if~$S=\{\varepsilon\}$, then we have $\CCC_{\SS}=0$.  For every $u=\xx_{1}^{\nn_{1}}\cdots\xx_{\rr}^{\nn_{\rr}}\in[\XXX]$, define $$\Sd(\uu)=\#\{i\mid \nn_{i}> \CCC_S, i\in\{1\wdots r\}\}. $$
It follows immediately that we have
$$\Sd(\varepsilon)=0.$$
Finally, for an arbitrary nonempty subset~$B\subseteq \Irr(\SS)$, we define
$$\Sd(B)=\max\{\Sd(\mu)\mid \mu\in B\}$$
and define
$$\Sd(\emptyset)=0.$$
We shall soon see that~$\Sd(B)$ is always finite if~$X$ is a finite set.

Now we offer a characterization for the Gelfand-Kirillov dimension of a finitely generated commutative algebra. In particular, as far as the authors know, it offers a new alternative proof for the fact that the Gelfand-Kirillov dimension of a finitely generated commutative algebra is a nonnegative integer.
\begin{lemm}\label{lemma-ca}
Let~$\XXX$ be a nonempty finite set and let~$S$ be a finite Gr\"obner  basis for~$k[\XXX|\SS]$. Then we have~$\gkd{k[\XXX|\SS]}=\Sd(\Irr(\SS))$.
\end{lemm}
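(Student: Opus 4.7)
The plan is to establish both inequalities $\gkd{\kk[\XXX|\SS]} \leq \dd$ and $\gkd{\kk[\XXX|\SS]} \geq \dd$, where $\dd := \Sd(\Irr(\SS))$, using the formula $\gkd{\kk[\XXX|\SS]} = \overline{\lim\limits_{\nn\to\infty}}\log_\nn f(\nn)$ with $f(\nn) := \#\{\uu \in \Irr(\SS) : \ell(\uu) \leq \nn\}$ recalled just above. It suffices to show $f(\nn) = O(\nn^\dd)$ for the upper bound and $f(\nn) \geq \binom{\nn - \ell(\uu_0) + \dd}{\dd}$ for some fixed witness $\uu_0$ and all large $\nn$ for the lower bound.

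For the upper bound, every $\uu = \xx_1^{\nn_1}\cdots\xx_\rr^{\nn_\rr} \in \Irr(\SS)$ satisfies $|\{\ii : \nn_\ii > \CCC_\SS\}| \leq \dd$ by definition of $\Sd$. I partition $\Irr(\SS) \cap \{\uu : \ell(\uu)\leq \nn\}$ according to this subset $T \subseteq \{1,\dots,\rr\}$ of ``big'' indices: for each fixed $T$ with $|T| \leq \dd$, the coordinates $\nn_\ii$ with $\ii \notin T$ lie in the finite set $\{0,\dots,\CCC_\SS\}$ (contributing $O(1)$ choices in total), while the coordinates $\nn_\ii$ with $\ii \in T$ contribute $O(\nn^{|T|}) \leq O(\nn^\dd)$ tuples with bounded sum. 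Since there are only $O(1)$ such subsets $T$ with $|T| \leq \dd$, I obtain $f(\nn) = O(\nn^\dd)$, and hence $\gkd{\kk[\XXX|\SS]} \leq \dd$.

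The substantive step is the lower bound. Fix $\uu_0 = \xx_1^{\aa_1}\cdots\xx_\rr^{\aa_\rr} \in \Irr(\SS)$ with $\Sd(\uu_0) = \dd$, and after reindexing assume its set of big positions is $\{1,\dots,\dd\}$, so $\aa_\ii > \CCC_\SS$ for $\ii \leq \dd$. The key claim is that the entire cone $\mathcal{C} := \{\uu_0 \cdot \xx_1^{\bb_1}\cdots\xx_\dd^{\bb_\dd} : \bb_1,\dots,\bb_\dd \in \mathbb{N}\}$ lies inside $\Irr(\SS)$. To see this, let $\ss \in \SS$ with $\bar\ss = \xx_1^{e_\jj}\cdots\xx_\rr^{e_\rr}$; by the very definition of $\CCC_\SS$ as the least $\nn$ with $\bar\ss \mid \xx_1^\nn\cdots\xx_\rr^\nn$ for every $\ss \in \SS$, each variable-exponent satisfies $e_\jj \leq \CCC_\SS$. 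If $\bar\ss$ divided some element of $\mathcal{C}$, then for $\jj \leq \dd$ we would have $e_\jj \leq \CCC_\SS < \aa_\jj$, while for $\jj > \dd$ the divisibility would directly force $e_\jj \leq \aa_\jj$; together these would give $\bar\ss \mid \uu_0$, contradicting $\uu_0 \in \Irr(\SS)$. Thus $\mathcal{C} \subseteq \Irr(\SS)$, and the cone contributes $\binom{\nn - \ell(\uu_0) + \dd}{\dd}$ irreducible monomials of length at most $\nn$, yielding $\gkd{\kk[\XXX|\SS]} \geq \dd$.

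The only genuinely non-trivial ingredient is the cone claim, and the main obstacle is seeing why the $\dd$ ``big'' directions of $\uu_0$ are safe for unbounded extension. This works precisely because $\CCC_\SS$ is a \emph{uniform} upper bound on the variable-exponents of every leading monomial in $\SS$ simultaneously, so any coordinate of $\uu_0$ strictly exceeding $\CCC_\SS$ automatically dominates the corresponding $e_\jj$ of every generator. Once this uniformity is exploited, both the lower and upper bounds reduce to routine partition counts.
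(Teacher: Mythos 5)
Your proof is correct and takes essentially the same route as the paper: both sandwich $\Irr(\SS)$ between the upper-bound set $\{\uu : \Sd(\uu)\leq d\}$ (giving $O(\nn^{d})$ growth) and a lower-bound family obtained by pumping the $d$ coordinates of a maximizing witness that exceed $\CCC_{\SS}$, the key point in each case being that every exponent of every leading monomial of $\SS$ is at most $\CCC_{\SS}$. Your translated cone $\uu_0\xx_1^{b_1}\cdots\xx_d^{b_d}$ is only a cosmetic variant of the paper's witness set of all monomials in the $d$ big variables, with the paper's minimal-counterexample step replaced by the same uniform-bound observation stated directly.
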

\begin{proof}
Assume that~$\XXX=\{\xx_1,\dots,\xx_\rr\}$. Define $\mm=\Sd(\Irr(\SS))$.
 If~$\mm=0$, then we have~$\Irr(\SS)\subseteq \{\xx_{1}^{\nn_{1}}\cdots\xx_{\rr}^{\nn_{\rr}}\mid \nn_i\leq \CCC_S, i\in\{1\wdots r\}\}$. It follows that~$\gkd{k[\XXX|S]}=0$.

Now we suppose~$\mm\geq 1$. Since~$\mm=\Sd(\Irr(\SS))$, for every~$u\in \Irr(\SS)\subseteq [X]$, we obtain that~$\Sd(\uu)\leq m$.  Therefore, we have
$$\Irr(\SS)\subseteq\{\uu\in[\XXX]\mid \Sd(\uu)\leq\mm\}.$$
Moreover, there exists $\uu\in\Irr(\SS)$ such that $\Sd(\uu)=\mm$. Without loss of generality, we assume $\uu=\xx_{1}^{\nn_{1}}\cdots\xx_{\rr}^{\nn_{\rr}}$, where each $\nn_i\in \mathbb{N}$ and $\nn_{1},\dots,\nn_{\mm}> \CCC_{\SS}$.
For every~$w\in \Irr(\SS)$ and for every monomial~$v\in [X]$ such that~$v|w$, it is clear that~$v$ also lies in~$\Irr(\SS)$. So we deduce that $$\xx_{1}^{\CCC_{\SS}+1}\cdots\xx_{\mm}^{\CCC_{\SS}+1}\in\Irr(\SS).$$
Now we show that
$$\Irr(\SS)\supseteq   \{\xx_{1}^{\pp_{1}}\cdots\xx_{\mm}^{\pp_{\mm}}\mid \pp_{1},\dots,\pp_{\mm}\in\mathbb{N}\}\sm\{\varepsilon\}.
$$
It suffices to show that $\xx_{1}^{\nn}\cdots\xx_{\mm}^{\nn}$ belongs to $\Irr(\SS)$ for every $\nn>\CCC_{\SS}$.  Suppose for the contrary that
${\xx_{1}^{\nn}\cdots\xx_{\mm}^{\nn}\notin\Irr(\SS)}$ for some integer~$\nn>\CCC_{\SS}+1$.
Then we define
$$\nn_0=\minn\{\nn\mid \xx_{1}^{\nn}\cdots\xx_{\mm}^{\nn}\notin\Irr(\SS)\}.$$
Clearly, we have~$$\nn_0>\CCC_{\SS}+1$$ and  $\xx_{1}^{\nn_0-1}\cdots\xx_{\mm}^{\nn_0-1}\in\Irr(\SS)$.
Therefore,  there exists some element~$s\in S$ such
that we have~$\bar\ss|\xx_{1}^{\nn_0}\cdots\xx_{\mm}^{\nn_0}$ while $\bar\ss\nmid\xx_{1}^{\nn_0-1}\cdots\xx_{\mm}^{\nn_0-1}$.
By the definition of~$\CCC_S$, we deduce~$\CCC_S\geq n_0$, which contradicts with the fact that~$\nn_0>\CCC_{\SS}+1$.

Therefore we have
$$\{\xx_{1}^{\pp_{1}}\cdots\xx_{\mm}^{\pp_{\mm}}\mid \pp_{1},\dots,\pp_{\mm}\in\mathbb{N}\}\sm\{\varepsilon\}\subseteq\Irr(\SS)\subseteq
\{\uu\in[\XXX]\mid \Sd(\uu)\leq\mm\}.$$
By straightforward calculations, we obtain~$\gkd{k[\XXX|S]}=\mm$ immediately.
\end{proof}

\begin{theorem}\label{thm-ca}
Let~$\XXX=\{\xx_1,\dots,\xx_{\rr}\}$ be a nonempty finite set and~$S$ a finite Gr\"obner  basis for~$k[\XXX|\SS]$. Then we have
$$\gkd{k[\XXX|\SS]}=\Sd(\{\uu\mid \uu=\xx_{1}^{\nn_{1}}\cdots\xx_{\rr}^{\nn_{\rr}}\in\Irr(\SS),    \nn_i\in \{0, \CCC_S+1\}, i\in\{1\wdots r\}\}).$$
\end{theorem}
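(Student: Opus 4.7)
The plan is to apply Lemma~\ref{lemma-ca} to reduce the theorem to a statement about a finite set of ``corner'' monomials. Writing $\mm=\Sd(\Irr(\SS))$ and
$$T=\{\uu\mid \uu=\xx_1^{\nn_1}\cdots\xx_\rr^{\nn_\rr}\in\Irr(\SS),\ \nn_i\in\{0,\CCC_\SS+1\},\ i\in\{1\wdots \rr\}\},$$
it suffices by Lemma~\ref{lemma-ca} to prove $\Sd(T)=\mm$. The inclusion $T\subseteq\Irr(\SS)$ immediately yields $\Sd(T)\leq\mm$, so only the reverse inequality requires work.

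For $\Sd(T)\geq\mm$ I would first dispose of the case $\mm=0$, where the inequality is vacuous by the convention that $\Sd$ is nonnegative. If $\mm\geq 1$, pick a witness $\uu=\xx_1^{\nn_1}\cdots\xx_\rr^{\nn_\rr}\in\Irr(\SS)$ with $\Sd(\uu)=\mm$, and, without loss of generality, assume $\nn_1,\dots,\nn_\mm>\CCC_\SS$ while $\nn_{\mm+1},\dots,\nn_\rr\leq\CCC_\SS$. Define
$$\vv=\xx_1^{\CCC_\SS+1}\cdots\xx_\mm^{\CCC_\SS+1}.$$
Since the $\nn_i$ are integers strictly greater than $\CCC_\SS$, we have $\nn_i\geq\CCC_\SS+1$ for $i\leq\mm$, so $\vv$ divides $\uu$ in $[\XXX]$.

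The key closure property I would invoke is that $\Irr(\SS)$ is closed under divisors in $[\XXX]$: if some $\bar\ss$ divided $\vv$, it would also divide $\uu$, contradicting $\uu\in\Irr(\SS)$. Hence $\vv\in\Irr(\SS)$, and since all exponents of $\vv$ already lie in $\{0,\CCC_\SS+1\}$ by construction, we get $\vv\in T$ with $\Sd(\vv)=\mm$, establishing $\Sd(T)\geq\mm$ and completing the proof.

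No serious obstacle is expected; the theorem is essentially a sharpening of Lemma~\ref{lemma-ca}, replacing the a priori infinite set $\Irr(\SS)$ by an explicit finite set of at most $2^\rr$ monomials. The only points requiring a little care are the edge case $\mm=0$ and the elementary observation that integer exponents strictly exceeding $\CCC_\SS$ must exceed it by at least one, which is exactly what allows the divisibility $\vv\mid\uu$ that powers the whole argument.
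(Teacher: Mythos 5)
Your proof is correct and follows essentially the same route as the paper: reduce to Lemma~\ref{lemma-ca}, and for the nontrivial inequality truncate a monomial of maximal $\Sd$-value to exponents in $\{0,\CCC_\SS+1\}$, using that $\Irr(\SS)$ is closed under taking divisors. The only (cosmetic) difference is that you split cases on $\mm=\Sd(\Irr(\SS))$ rather than on the $\Sd$-value of the finite corner set, which if anything handles the degenerate case slightly more cleanly than the paper does.
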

\begin{proof}
Define~$$\mm'=\Sd(\{\uu\mid \uu=\xx_{1}^{\nn_{1}}\cdots\xx_{\rr}^{\nn_{\rr}}\in\Irr(\SS),    \nn_i\in \{0, \CCC_S+1\}, i\in\{1\wdots r\}\})$$
and
$$\mm=\Sd(\Irr(\SS)).$$
Clearly we have~$\mm'\leq\mm$. By Lemma~\ref{lemma-ca},  it suffices to show~$m\leq m'$.

If $\mm'=0$, then $\{\uu\mid \uu=\xx_{1}^{\nn_{1}}\cdots\xx_{\rr}^{\nn_{\rr}}\in\Irr(\SS)\sm\{\varepsilon\},    \nn_i\in \{0, \CCC_S+1\}, 1\leq i\leq r\}$ is an empty set. It follows that we have~$\Irr(\SS)\subseteq \{\xx_{1}^{\nn_{1}}\cdots\xx_{\rr}^{\nn_{\rr}}\mid \nn_i\leq \CCC_S, 1\leq i\leq r\}$.  Thus we have~$\mm=0=\mm'$.

Suppose $0<\mm'$. Then we have~$m\geq m'>0$.  So there exists a monomial~$\uu\in\Irr(\SS)$ with~$\Sd(\uu)=\mm>0$. Thus we may assume~$$\uu=\yy_1^{\tt_1}\cdots\yy_{m}^{\tt_{m}}\uu',$$
where~$\uu'\in[\XXX]$, $\Sd(\uu')=0$, $\tt_1\wdots\tt_{m}\geq\CCC_{\SS}+1$, $y_1\wdots y_{m}\in X$ and~$y_1<\cdots < y_{m}$.
Then~$y_{1}^{\CCC_{\SS}+1}\cdots y_{m}^{\CCC_{\SS}+1}$ also lies in $\Irr(\SS)$, in other words,  we have~$m'\geq \Sd(u)=\mm$.

Therefore, by Lemma~\ref{lemma-ca}, we obtain
$$\gkd{k[\XXX|S]}=m=\Sd(\{\uu\mid \uu=\xx_{1}^{\nn_{1}}\cdots\xx_{\rr}^{\nn_{\rr}}
\in\Irr(\SS),    \nn_i\in \{0, \CCC_S+1\}, i\in\{1\wdots r\}\}).$$
The proof is completed.
\end{proof}
 Theorem~\ref{thm-ca} offers a fast method for calculating the Gelfand-Kirillov dimension of a finitely presented commutative algebra by investigating the finite set
$$\{\uu\in\Irr(\SS)\mid \uu=\xx_{1}^{\nn_{1}}\cdots\xx_{\rr}^{\nn_{\rr}},   \nn_i\in \{0, \CCC_S+1\}, i\in\{1\wdots r\}\}$$
 rather than~$\Irr(\SS)$.
 As a special case, if~$X=\{\xx_1,\dots,\xx_{\rr}\}$ and~$S=\emptyset$, then we have~$\CCC_S=0$, $x_1^1\cdots x_r^1\in \Irr(S)$, and thus~$\gkd{\kk[X]}=r$.

\begin{rema}
Theorem~\ref{thm-ca} offers an algorithm for computing the Gelfand-Kirillov dimension (or equivalently, Krull dimension~\cite[Proposition 5.1]{Bell}) of a finitely presented commutative algebra. More precisely, let $\XXX=\{\xx_1,\dots,\xx_{\rr}\}$ be a nonempty finite
set and let~$T$ be a finite subset of~$k[X]$. Then by applying Buchberger's algorithm~\cite[Algotithm 1.7.1]{gsbook1}, we can find a finite \gb\ $\SS$ for the algebra~$\AAA:=k[X|T]$. Then we obtain the nonnegative integer~$\CCC_{\SS}$
and the set~$\BB:=\{\uu\mid \uu=\xx_{1}^{\nn_{1}}\cdots\xx_{\rr}^{\nn_{\rr}}\in\Irr(\SS),    \nn_i\in \{0, \CCC_S+1\}, i\in\{1\wdots r\}\}$.
Since~$B$ and~$S$ are finite sets, there exists an algorithm for calculating~$\Sd(B)$, which is exactly~$\gkd{\AAA}$.
\end{rema}
\begin{exam}
Let $\XXX=\{\xx_1\wdots\xx_6\}$ and~$\SS=\{\xx_1^2\xx_2^3,\xx_3\xx_4\}$.
Then~$\CCC_\SS=3$.
For every subset~$\{\yy_1\wdots\yy_5\}\subseteq \XXX$, we have~$\yy_1^4\cdots\yy_5^4\notin\Irr(\SS)$. Since~$\xx_2^4\xx_3^4\xx_5^4\xx_6^4$ lies in~$\Irr(\SS)$.
Thus we obtain~$\gkd{k[\XXX|\SS]}=4$.
\end{exam}

\begin{coro}\label{porp-1}
Let $\SS$ be a nonempty subset of~$[\XXX]^+$ and~$\uu\in[\XXX]^+$. Suppose that $\gkd{k[\XXX|\SS]}=p\geq1$. If there exists a subset~$\{y_1\wdots y_p\}\subseteq\XXX$ such that some monomial in~$\Irr(S)$ of the form~$\yy_{1}^{\CCC_S+1}\cdots\yy_{p}^{\CCC_S+1}$  lies in~$\Irr(\SS\cup\{\uu\})$,
where $\yy_i\neq\yy_j$ for all distinct~$i,j\leq p$,
then we have
$$\gkd{k[\XXX|\SS\cup\{\uu\}]}=\gkd{k[\XXX|\SS]};$$ Otherwise,  we have $$\gkd{k[\XXX|\SS\cup\{\uu\}]}=\gkd{k[\XXX|\SS]}-1.$$
\end{coro}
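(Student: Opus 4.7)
The plan is to apply Theorem~\ref{thm-ca} to both $\kk[\XXX|\SS]$ and $\kk[\XXX|\SS\cup\{\uu\}]$. Since $\SS\subseteq[\XXX]^+$ and $\uu\in[\XXX]^+$ consist of monomials, both sets are automatically Gr\"obner bases for the monomial ideals they generate, and one has $\Irr(\SS\cup\{\uu\})=\Irr(\SS)\sm\{\ww\in[\XXX]\mid \uu|\ww\}\subseteq\Irr(\SS)$. Write $\qq=\gkdim{\kk[\XXX|\SS\cup\{\uu\}]}$.

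First I would establish the general inequality $\qq\leq\pp$. By Theorem~\ref{thm-ca} applied to $\SS\cup\{\uu\}$, there exist distinct $\zz_1\wdots\zz_\qq\in\XXX$ with $\zz_1^{\CCC_{\SS\cup\{\uu\}}+1}\cdots\zz_\qq^{\CCC_{\SS\cup\{\uu\}}+1}\in\Irr(\SS\cup\{\uu\})\subseteq\Irr(\SS)$. Since $\CCC_{\SS\cup\{\uu\}}\geq\CCC_\SS$ and $\Irr(\SS)$ is closed under taking divisors, the smaller monomial $\zz_1^{\CCC_\SS+1}\cdots\zz_\qq^{\CCC_\SS+1}$ also lies in $\Irr(\SS)$, so Theorem~\ref{thm-ca} forces $\qq\leq\pp$.

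For the first conclusion (Case~1), the hypothesis supplies distinct $\yy_1\wdots\yy_\pp\in\XXX$ with $\yy_1^{\CCC_\SS+1}\cdots\yy_\pp^{\CCC_\SS+1}\in\Irr(\SS\cup\{\uu\})$. I would mimic the inductive argument used in the proof of Lemma~\ref{lemma-ca}, now applied to $\SS\cup\{\uu\}$, to conclude $\yy_1^{\nn}\cdots\yy_\pp^{\nn}\in\Irr(\SS\cup\{\uu\})$ for every $\nn>\CCC_{\SS\cup\{\uu\}}$: any $\tt\in\SS\cup\{\uu\}$ whose leading monomial divides $\yy_1^{\nn_0}\cdots\yy_\pp^{\nn_0}$ but not $\yy_1^{\nn_0-1}\cdots\yy_\pp^{\nn_0-1}$ would force some variable of $\ov{\tt}$ to have degree exactly $\nn_0$, while the definition of $\CCC_{\SS\cup\{\uu\}}$ bounds every such degree by $\CCC_{\SS\cup\{\uu\}}$. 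The resulting monomial then verifies Theorem~\ref{thm-ca}'s criterion for $\qq\geq\pp$, yielding $\qq=\pp$.

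For the second conclusion (Case~2), the contrapositive of the extension argument above shows $\qq\leq\pp-1$: any putative witness $\zz_1^{\CCC_{\SS\cup\{\uu\}}+1}\cdots\zz_\pp^{\CCC_{\SS\cup\{\uu\}}+1}\in\Irr(\SS\cup\{\uu\})$ for $\qq\geq\pp$ would, via the first step, produce a corner monomial $\zz_1^{\CCC_\SS+1}\cdots\zz_\pp^{\CCC_\SS+1}\in\Irr(\SS)\cap\Irr(\SS\cup\{\uu\})$, contradicting the Case~2 hypothesis. For the matching bound $\qq\geq\pp-1$, I would take a $\pp$-element witness $\{\yy_1\wdots\yy_\pp\}$ for $\gkdim{\kk[\XXX|\SS]}$ and identify an index $\ii$ whose removal leaves a $(\pp-1)$-element corner monomial surviving in $\Irr(\SS\cup\{\uu\})$, then extend it via the Case~1 mechanism. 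The main obstacle lies in the Case~1 extension step: reconciling the $\CCC_\SS+1$ threshold appearing in the hypothesis with the $\CCC_{\SS\cup\{\uu\}}+1$ threshold demanded by Theorem~\ref{thm-ca}, since the monotonicity argument from Lemma~\ref{lemma-ca} is what drives the contradiction.
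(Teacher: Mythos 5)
Your overall strategy is the same as the paper's (apply Theorem~\ref{thm-ca} to both $\kk[\XXX|\SS]$ and $\kk[\XXX|\SS\cup\{\uu\}]$, use that $\Irr(\cdot)$ is closed under divisors, and in the second case delete a variable dividing $\uu$), but the step you yourself flag as ``the main obstacle'' is a genuine gap, not a loose end. In Case~1 you claim that the minimal-counterexample argument of Lemma~\ref{lemma-ca}, applied to $\SS\cup\{\uu\}$, upgrades $\yy_1^{\CCC_\SS+1}\cdots\yy_p^{\CCC_\SS+1}\in\Irr(\SS\cup\{\uu\})$ to $\yy_1^{\nn}\cdots\yy_p^{\nn}\in\Irr(\SS\cup\{\uu\})$ for all $\nn>\CCC_{\SS\cup\{\uu\}}$. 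It does not: that argument only shows that a first failure $\nn_0$ satisfies $\nn_0\leq\CCC_{\SS\cup\{\uu\}}$, while your base case only covers $\nn\leq\CCC_\SS+1$, so nothing excludes a failure in the window $\CCC_\SS+1<\nn_0\leq\CCC_{\SS\cup\{\uu\}}$ --- exactly where $\uu$ itself strikes when its support lies in $\{\yy_1\wdots\yy_p\}$ but one of its exponents exceeds $\CCC_\SS+1$. Indeed Case~1 as literally stated fails: take $\XXX=\{\xx_1,\xx_2\}$, $\SS=\{\xx_2^2\}$ (so $\CCC_\SS=2$ and $\gkd{\kk[\XXX|\SS]}=1$) and $\uu=\xx_1^4$; then $\xx_1^{3}\in\Irr(\SS)\cap\Irr(\SS\cup\{\uu\})$, so the hypothesis holds, yet $\kk[\XXX|\SS\cup\{\uu\}]$ is finite dimensional and has Gelfand--Kirillov dimension $0$. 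So your Case~1 step cannot be closed as written; the hypothesis must be taken with the threshold $\CCC_{\SS\cup\{\uu\}}+1$ (your inequality $\CCC_{\SS\cup\{\uu\}}\geq\CCC_\SS$ and the divisor argument for $\qq\leq\pp$ are the correct ingredients), and with that threshold Case~1 is immediate from Theorem~\ref{thm-ca} with no extension argument at all. (For what it is worth, the paper's own proof simply asserts Case~1, so it glosses over the same point; but a review must record that the step fails, not that it ``remains to be reconciled''.)

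Your Case~2 is incomplete in the same place but, unlike Case~1, it is repairable, and here your sketch is roughly at the level of the paper's. The upper bound $\qq\leq\pp-1$ by contraposition (a witness with exponents $\CCC_{\SS\cup\{\uu\}}+1$ has a divisor with exponents $\CCC_\SS+1$ surviving in $\Irr(\SS\cup\{\uu\})$, contradicting the Case~2 hypothesis) is fine. For the lower bound you lean on the ``Case~1 mechanism'', which is the broken step; the observation that actually saves this case, and which neither you nor the paper states, is that the Case~2 hypothesis forces $\uu$ to divide some $\yy_1^{\CCC_\SS+1}\cdots\yy_p^{\CCC_\SS+1}\in\Irr(\SS)$, hence every exponent of $\uu$ is at most $\CCC_\SS+1$ and so $\CCC_{\SS\cup\{\uu\}}\leq\CCC_\SS+1$. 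After deleting a variable $\yy_i$ with $\yy_i|\uu$, the monomial in the remaining $p-1$ variables with exponents $\CCC_{\SS\cup\{\uu\}}+1$ avoids every $\ss\in\SS$ (its exponents are $\leq\CCC_\SS$ and its support cannot sit inside the witness variables) and avoids $\uu$ (whose support contains the deleted variable), so Theorem~\ref{thm-ca} gives $\qq\geq\pp-1$. Without this bridging of the two thresholds, your Case~2, like the paper's, exhibits only the exponent-$(\CCC_\SS+1)$ monomial and is not yet a proof.
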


\begin{proof}
Obviously, every nonempty subset of $[\XXX]^+$ is a \gb\ in $\kk[\XXX]$, and thus~$S$ is a \gb\ for~$k[\XXX|\SS]$.
 By assumption, we have
$$\gkd{k[\XXX|\SS]}=\Sd(\{\xx_{1}^{\nn_{1}}\cdots\xx_{\rr}^{\nn_{\rr}}
\in\Irr(\SS)\mid \nn_i\in \{0, \CCC_S+1\}, 1\leq i\leq r\})=p, \mbox{ where } p\geq1.$$
If there exists a subset~$\{y_1\wdots y_p\}\subseteq\XXX$, such that some monomial in~$\Irr(S)$ of the form~$\yy_{1}^{\CCC_S+1}\cdots\yy_{p}^{\CCC_S+1}$  lies in~$\Irr(\SS\cup\{\uu\})$,
where $\yy_i\neq\yy_j$ for all distinct~$i,j\leq p$,
then we have
$$\gkd{k[\XXX|\SS\cup\{\uu\}]}=p=\gkd{k[\XXX|\SS]}.$$

Now we assume that for every subset~$\{y_1\wdots y_p\}\subseteq\XXX$, every monomial in~$\Irr(S)$ of the form~$\yy_{1}^{\CCC_S+1}\cdots\yy_{p}^{\CCC_S+1}$ does not belong to the set~$\Irr(\SS\cup\{\uu\})$,
then we have~$\uu|\yy_{1}^{\CCC_S+1}\cdots\yy_{p}^{\CCC_S+1}$.
If~$p=1$, then~$\{\xx_{1}^{\nn_{1}}\cdots\xx_{\rr}^{\nn_{\rr}}\in\Irr(\SS\cup\{\uu\})\mid \nn_i\in \{0, \CCC_S+1\}, i\in\{1\wdots r\}\}=\{\varepsilon\}$, and thus
$$\gkd{k[\XXX|\SS\cup\{\uu\}]}=0=p-1=\gkd{k[\XXX|\SS]}-1.$$
If~$p>1$, then without loss of generality, we may assume~$\xx_{i_1}^{\CCC_S+1}\cdots\xx_{i_p}^{\CCC_S+1}\in \Irr(\SS)$
and~$\xx_{i_1}|\uu$.  It follows that~$\xx_{i_2}^{\CCC_S+1}\cdots\xx_{i_p}^{\CCC_S+1}$
lies in~$\Irr(\SS\cup\{\uu\})$. Therefore we obtain that
$$\gkd{k[\XXX|\SS\cup\{\uu\}]}=p-1=\gkd{k[\XXX|\SS]}-1.$$
The proof is completed.
\end{proof}

The following proposition offers a rough estimation of the Gelfand-Kirillov dimension of a finitely presented commutative algebra.
\begin{prop}\label{gkc}
Let~$\XXX=\{\xx_1,\dots,\xx_{\rr}\}$ be a nonempty finite set and let~$S=\{\ff_1\wdots\ff_t\}$ be a finite Gr\"obner  basis for~$k[\XXX|\SS]$.
Then the following hold:

\ITEM1 If~$1\leq t\leq r$ and $\varepsilon\notin\SS$, then we have
$$\gkd{k[\XXX|\SS]}\in\{ r-t\wdots r-1\};$$

\ITEM2 If~$t>r$, then we have
$$\gkd{k[\XXX|\SS]}\in \{0\wdots r-1\}.$$
Moreover, if $\rr>1$ and~$\tt<\rr$, then~$\gkd{k[\XXX|\SS]}=0\Leftrightarrow \varepsilon\in\SS$.
\end{prop}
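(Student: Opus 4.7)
The plan is to reduce the problem to monomial ideals by replacing $S$ with its leading-monomial set $\overline{S} = \{\overline{f_1}, \ldots, \overline{f_t}\}$: we have $\Irr(S) = \Irr(\overline{S})$, the constant $C_S$ depends only on $\overline{S}$, and $\overline{S} \subseteq [X]^+$ precisely when $\varepsilon \notin S$---this is exactly the shape of input required by Corollary~\ref{porp-1}. Throughout, I would invoke Lemma~\ref{lemma-ca} to identify $\gkd{k[X|S]}$ with $\Sd(\Irr(\overline{S}))$.

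For the upper bound $\gkd{k[X|S]} \leq r - 1$, valid in both parts whenever $\varepsilon \notin S$, I would pick $s^* \in S$ achieving $C_S = \max_i \deg_{x_i}(\overline{s^*}) \geq 1$. For any $u = x_1^{n_1} \cdots x_r^{n_r} \in \Irr(S)$, the non-divisibility $\overline{s^*} \nmid u$ forces some index $i$ with $n_i < \deg_{x_i}(\overline{s^*}) \leq C_S$, so that this $i$ does not contribute to $\Sd(u)$ and hence $\Sd(u) \leq r - 1$. Lemma~\ref{lemma-ca} then closes the bound. In the remaining subcase of (ii) where $\varepsilon \in S$, the algebra is zero and $\gkd = 0 \leq r - 1$ automatically (since $r \geq 1$).

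For the lower bound in (i), I would induct on $t$. The base case $t = 1$ uses that $\overline{f_1} \neq \varepsilon$ involves some variable $x_j$, so $\prod_{i \neq j} x_i^{C_S + 1}$ is not divisible by $\overline{f_1}$, lies in $\Irr(\{\overline{f_1}\})$ with $\Sd$-value equal to $r - 1$, and Lemma~\ref{lemma-ca} yields $\gkd \geq r - 1$. For the inductive step $2 \leq t \leq r$, the hypothesis gives $\gkd{k[X|\{\overline{f_1}, \ldots, \overline{f_{t-1}}\}]} \geq r - t + 1 \geq 1$, so Corollary~\ref{porp-1} applied with $u = \overline{f_t}$ drops the dimension by at most one and gives $\gkd{k[X|S]} \geq r - t$. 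The lower bound $\gkd \geq 0$ in (ii) is trivial. Finally, for the moreover clause, $(\Leftarrow)$ follows from the triviality of the algebra; for $(\Rightarrow)$, assuming $r > 1$, $t < r$ and $\varepsilon \notin S$, either $t = 0$ so $\gkd = r \geq 2$, or $1 \leq t < r$ so (i) forces $\gkd \geq r - t \geq 1$, and in either case $\gkd > 0$ contradicts $\gkd = 0$.

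The step I expect to require the most care is maintaining the hypothesis $p \geq 1$ of Corollary~\ref{porp-1} at every iteration of the induction, which is exactly where the constraint $t \leq r$ in (i) is essential; once $t > r$ the chain of inductive applications can run down to dimension zero, which is why (ii) only affords the trivial lower bound $\gkd \geq 0$.
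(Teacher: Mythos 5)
Your proposal is correct and takes essentially the same route as the paper: identify $\gkd{k[X|S]}$ with $\Sd(\Irr(S))$ via Lemma~\ref{lemma-ca}, pass to the leading-monomial set $\ov{S}$, settle $t=1$ with an explicit monomial, and induct on $t$ through Corollary~\ref{porp-1}, with $t\leq r$ guaranteeing the hypothesis $p\geq 1$ at each step. The only (harmless) differences are cosmetic: you obtain the bound $\gkd{k[X|S]}\leq r-1$ directly from non-divisibility by an $s^*$ realizing $C_S$ instead of citing Theorem~\ref{thm-ca} with $x_1^{C_S+1}\cdots x_r^{C_S+1}\notin\Irr(S)$, and you spell out the \emph{moreover} clause, which the paper leaves implicit.
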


\begin{proof}
\ITEM1  We use induction on~$t$.
If $t=1$, then we may assume that~$\bar\ff_1=\xx_{1}^{\nn_{1}}\cdots\xx_{\rr}^{\nn_{\rr}}$, where~$\nn_i\in \mathbb{N}$ and~$\nn_1>0$.
Thus we have~$\CCC_S=\max\{\nn_{1}\wdots \nn_{\rr}\}$. Clearly we have
$$x_2^{\CCC_S+1}\cdots x_r^{\CCC_S+1}\in \Irr(S) \mbox{ and } x_1^{\CCC_S+1}x_2^{\CCC_S+1}\cdots x_r^{\CCC_S+1}\notin \Irr(S). $$
  Therefore by Theorem~\ref{thm-ca}, we
obtain
$$\gkd{k[\XXX|\SS]}=\Sd(\{\uu\mid \uu=\xx_{1}^{\nn_{1}}\cdots\xx_{\rr}^{\nn_{\rr}}\in\Irr(\SS),    \nn_i\in \{0, \CCC_S+1\}, i\in\{1\wdots r\}\})=r-1.$$
Now we assume~$t\geq 2$.
Obviously, $\{\ov{\ff_1}\wdots\ov{\ff_{t-1}}\}$ is a \gb\ in $\kk[\XXX]$ and~$\Irr(\SS)=\Irr(\ov{\SS})$.
By induction hypothesis, we have $r-t+1\geq1$ and $$p:=\gkd{k[\XXX|\{\ov{\ff_1}\wdots\ov{\ff_{t-1}}\}]}\in\{r-t+1\wdots r-1\}.$$
By Corollary~\ref{porp-1}, we have
$$\gkd{k[\XXX|\ov{\SS}]}=\gkd{k[\XXX|\SS]}\in \{p,p-1\}\subseteq \{r-t\wdots r-1\}.$$

\ITEM2
Since~$x_1^{\CCC_S+1}x_2^{\CCC_S+1}\cdots x_r^{\CCC_S+1}\notin \Irr(S)$, by Theorem \ref{thm-ca}, the result follows.
\end{proof}

\begin{exam} Let~$\XXX=\{\xx_1,\dots,\xx_{\rr}\}$ be a nonempty finite set.
Given a positive integer~$t$, for every integer~$\pp$ such that~$1\leq p\leq \min\{t,r\}$, we define $m=t-p$ and define
$$\SS=\{\xx_i^{4m+1}\mid1\leq i\leq p\}\cup\{\xx_{1}^j\xx_r^{3m-j}\mid m\leq j\leq 2m-1\}.$$
Note that if~$t=p$, then we have~$\SS=\{\xx_i\mid1\leq i\leq t\}$.
Clearly~$\SS$ is a \gb\ for~$k[\XXX|\SS]$ with~$\#\SS=t$ and we have~$C_S=4m+1$. Let
$$B:=\{\uu\in[\XXX]\mid \uu=\xx_{1}^{\nn_{1}}\cdots\xx_{\rr}^{\nn_{\rr}},   \nn_i\in \{0, \CCC_S+1\}, i\in\{1\wdots r\}\}.$$
 If~$p<r$, then we have
$$\xx_{p+1}^{\CCC_S+1}\cdots\xx_{r}^{\CCC_S+1}\in B\cap\Irr(\SS),$$
and for every monomial~$\uu\in B$ with $\Sd(\mu)=r-p+1$, $\uu$ does not lie in~$\Irr(\SS)$.
Thus we obtain
$$\gkd{k[\XXX|\SS]}=r-p\in\{r-t\wdots r-1\}.$$
If~$p=r$, then we have
$B\cap\Irr(\SS)=\{\varepsilon\}.$
Thus
$\gkd{k[\XXX|\SS]}=r-p=0.$

It follows that if~$S$ is a \gb\ consisting of~$t\geq 1$ elements and~$\varepsilon\notin\SS$, then the possible Gelfand-Kirillov dimension of the algebra~$k[X|S]$ can be an arbitrary integer between~$\max\{r-t,0\}$ and~$r-1$.

\end{exam}

\begin{rema}
In Proposition~\ref{gkc},  a \gb~$S$ with smaller cardinality might improve the estimation.   So in  Proposition~\ref{gkc}, we may assume~$S$ to be a minimal \gb. Recall that a \gb~$S$ is called \emph{minimal} if for all~$f,g\in S$, $\bar\ff$ is not a multiple of~$\bar\gg$. Moreover,  when the order is fixed, the cardinality of a minimal \gb\ is also minimum among all the possible cardinalities of \gbs\ for~$k[X|S]$.
If we use another monomial order~$\prec$ on~$[\XXX]$, then we can find a minimal \gb~$T$ for~$\kk[\XXX|S]$  with respect to the order~$\prec$ and probably we have~$\#T\neq \#S$.
\end{rema}

 \section{\gsbs\ theory for bicommutative algebras}\label{gsb}
Our aim in this section is to
establish  a \gsbs\ theory for bicommutative algebras and show that every finitely generated bicommutative algebra has a finite \gsb.
\subsection{Composition-Diamond lemma for bicommutative algebras}\label{gsbforbi}
We first recall that \emph{terms} over~$\XXX$ are ``words with brackets''; they are
 defined by the following induction:

\ITEM1 Every element of~$\XXX$ is a term over~$\XXX$;

\ITEM2 If~$\mu, \nu$ are terms over~$\XXX$, then so is~$(\mu\nu)$.

\noindent We denote by~$\XXX^{(*)}$ the set of all terms over~$\XXX$.

Let~$\kk\XXX^{(*)}$ be the free nonassociative algebra generated by~$\XXX$. Then clearly $\XXX^{(*)}$ forms a linear basis for~$\kk\XXX^{(*)}$.
For every~$\mu$ in~$\XXX^{(*)}$, we define the \emph{content}~$\cont(\mu)$ of~$\mu$ to be the set of all letters that appear in $\mu$ and define the \emph{length}~$\ell(\mu)$ of~$\mu$ to be the number of letters (with repetition) that appear in $\mu$. For instance, for $\mu=((\xx_1\xx_1)\xx_2)\in \XXX^{(\ast)}$, we have~$\cont(\mu)=\{\xx_1,\xx_2\}$ and~$\ell(\mu)=3$.

For all~$\aa_1\wdots\aa_n$ ($\nn\geq 1$) in a bicommutative algebra~$\mB$, we define
$$
\Lnormed{\aa_1\wdots\aa_\nn}=(\cdots (((\aa_1 \aa_2)  \aa_3)   \cdots \aa_{\nn-1})  \aa_\nn) \ \ \mbox{(left-normed bracketing),}
$$
and define
$$
\Rnormed{\aa_1\wdots\aa_\nn}=(\aa_1 (\aa_2 \cdots   (\aa_{\nn-2}  (\aa_{\nn-1}  \aa_\nn))\cdots)) \ \ \mbox{(right-normed bracketing).}
$$
In particular, we have~$[\aa_1]_\mathsf{_L}=[\aa_1]_\mathsf{_R}=\aa_1$ and~$[\aa_1,\aa_2]_\mathsf{_L}=[\aa_1,\aa_2]_\mathsf{_R}=
(\aa_1 \aa_2)$.
For all elements~$\aa_1\wdots\aa_\pp,\bb_1\wdots\bb_\qq\in\mB$, where~$\pp,\qq\in \mathbb{N}^+$, we define
$$
[\aa_1\cdots\aa_\pp;\bb_1\cdots\bb_\qq]
=\Rnormed{\aa_1\wdots\aa_{\pp-1}, \Lnormed{\aa_\pp,\bb_1\wdots\bb_\qq}}.
$$

Now we recall some known identities in bicommutative algebras established in~\cite{bicom2003,bicom-basis}, see also~\cite{sz}.
\begin{lemm}\emph{\cite{bicom2003,bicom-basis}}\label{lemm-ass}
  For all~$\aa_1\wdots\aa_\pp,\bb_1\wdots\bb_\qq,
  \cc_1\wdots\cc_\mm,\dd_1\wdots\dd_\nn\in\mB$, for all permutations~$\sigma\in S_\pp, \delta\in\SS_\qq$, where~$\pp,\qq,\mm,\nn\in \mathbb{N}^+$, and $\SS_p, \SS_q$ are symmetric groups of orders $p,q$ respectively, we have

\ITEM1 ~$((\aa_1(\aa_2\aa_3))\aa_4)
=(\aa_1((\aa_2\aa_3)\aa_4))$,

  \ITEM2 $[\aa_1\cdots\aa_\pp;\bb_1\cdots\bb_\qq]=\Rnormed{\aa_1\wdots\aa_{\pp-1}, \Lnormed{\aa_\pp,\bb_1\wdots\bb_\qq}}
=\Lnormed{{\Rnormed{\aa_1\wdots\aa_\pp,\bb_1},\bb_2\wdots\bb_\qq}}$. In particular, we have~$[\aa_1\cdots\aa_\pp;\bb_1\cdots\bb_\qq]
=[\aa_{\sigma(1)}\cdots\aa_{\sigma(\pp)};
\bb_{\delta(1)}\cdots\bb_{\delta(\qq)}]$.

\ITEM3 $([\aa_1\cdots\aa_\pp;\bb_1\cdots\bb_\qq]
[\cc_1\cdots\cc_\mm;\dd_1\cdots\dd_\nn])
=[\aa_1\cdots\aa_\pp\cc_1\cdots\cc_\mm;
\bb_1\cdots\bb_\qq\dd_1\cdots\dd_\nn]$.
\end{lemm}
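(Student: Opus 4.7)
My plan is to prove the three parts sequentially, using (i) as the foundation for (ii), and both (i) and (ii) to support (iii). For (i), I would first unfold each side by one application of the defining identities: right commutativity with $x=a_1$, $y=a_2 a_3$, $z=a_4$ rewrites the left-hand side as $(a_1 a_4)(a_2 a_3)$, and left commutativity with the same assignment rewrites the right-hand side as $(a_2 a_3)(a_1 a_4)$. The identity thus reduces to the auxiliary claim that any two ``dyadic'' products commute, $(xy)(zw) = (zw)(xy)$. I would derive this via two complementary three-step chains: an LC--RC--LC chain yields the right-factor swap $(xy)(zw) = (xw)(zy)$, and an RC--LC--RC chain yields the left-factor swap $(xy)(zw) = (zy)(xw)$; combining these gives $(xw)(zy) = (zy)(xw)$, exactly the commutativity we want.

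For (ii), I would first establish the alternative form $[a_1 \cdots a_p; b_1 \cdots b_q] = \Lnormed{\Rnormed{a_1, \dots, a_p, b_1}, b_2, \dots, b_q}$ by induction on $q$. The base case $q=1$ is immediate from unwinding definitions. For the step, writing $[a_1 \cdots a_p; b_1 \cdots b_{q+1}] = \Rnormed{a_1, \dots, a_{p-1}, (Y \cdot b_{q+1})}$ with $Y = \Lnormed{a_p, b_1, \dots, b_q}$, I iteratively apply (i) at each nesting level---each application is licensed because the middle subterm is always a product---to peel $b_{q+1}$ all the way out, obtaining $[a_1 \cdots a_p; b_1 \cdots b_{q+1}] = [a_1 \cdots a_p; b_1 \cdots b_q] \cdot b_{q+1}$, after which the inductive hypothesis closes the step. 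Once the alternative form is in hand, symmetry in $a_1, \dots, a_p$ follows from iterated LC applied to the right-normed inner structure $\Rnormed{a_1, \dots, a_p, b_1}$, and symmetry in $b_1, \dots, b_q$ follows from iterated RC applied to the left-normed inner structure of the original form together with the outer left-normed structure of the alternative form.

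For (iii), two immediate consequences of (ii) streamline the induction: right-multiplication of $U = [a_1 \cdots a_p; b_1 \cdots b_q]$ by a single element appends it to the $b$-side (via the alternative form), and left-multiplication prepends it to the $a$-side (via the original form). I then induct on $m$. The base $m=1$, where $V = \Lnormed{c_1, d_1, \dots, d_n}$, is handled by a sub-induction on $n$: at $n=1$, LC gives $U(c_1 d_1) = c_1(U d_1)$, after which the two single-factor rules plus (ii)'s $a$-symmetry produce the claimed right-hand side; for $n \geq 2$, identity (i) in the form $U(V' \cdot d_n) = (UV') \cdot d_n$---valid because $V' = \Lnormed{c_1, d_1, \dots, d_{n-1}}$ is a product---together with the inductive hypothesis completes the step. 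For $m \geq 2$, write $V = c_1 \cdot V''$ with $V'' = [c_2 \cdots c_m; d_1 \cdots d_n]$; LC yields $UV = c_1(UV'')$, and the inductive hypothesis on $UV''$ followed by the prepend rule and (ii)'s $a$-symmetry finishes.

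The main obstacle is the auxiliary commutativity of dyadic products inside the proof of (i): this identity is invisible to either LC or RC alone and genuinely requires the two complementary three-step chains to be played off against each other. Once (i) is secured, the rest is careful bookkeeping---(ii) opens up via a single induction driven by (i), and (iii) yields to a nested double induction combining LC, (i), and (ii).
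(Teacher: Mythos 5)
Your proposal is correct. Note that the paper itself offers no proof of this lemma: it is quoted from the references on bicommutative algebras, so there is no internal argument to compare against. Your self-contained derivation checks out at every step: the two three-step chains do give $(xy)(zw)=(xw)(zy)$ (LC--RC--LC) and $(xy)(zw)=(zy)(xw)$ (RC--LC--RC), whose combination yields commutativity of products of two products, and since RC and LC rewrite the two sides of (i) as $(a_1a_4)(a_2a_3)$ and $(a_2a_3)(a_1a_4)$ respectively, (i) follows. The induction on $q$ for (ii) is sound because at every nesting level the middle factor being peeled past is itself a product, so (i) applies, and the symmetry claims are exactly the adjacent transpositions furnished by LC in the right-normed segment and RC in the left-normed segment. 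The double induction for (iii), driven by the append/prepend rules extracted from the two normal forms in (ii) together with LC and (i), closes correctly, with the $a$-symmetry from (ii) used to move the prepended $c_1$ to its proper place. This is essentially the standard argument from the cited literature, and it would serve as a complete proof if one wished to make the paper self-contained.
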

Let~$\BC(\XXX)$ be the free bicommutative algebra
generated by a well-ordered set~$\XXX$.
Note that by Lemma~\ref{lemm-ass}, the ordering of~$\aa_\ii$ and that of~$\bb_\jj$ do not make difference in the monomial~$[\aa_1\cdots\aa_\pp;\bb_1\cdots\bb_\qq]$, and what is important is whether they are on the ``left hand side" or on the ``right hand side".
In particular, for all~$\pp,\qq\in \mathbb{N}^{+}$, for all  letters~$\zz_1\wdots\zz_\pp,\yy_1\wdots\yy_\qq\in \XXX$, when we write an element of the form~$[\zz_1\cdots\zz_\pp;\yy_1\cdots\yy_\qq]\in\BC(\XXX)$, it is not necessary to assume~$\zz_1\leq \cdots\leq \zz_\pp$ or~$\yy_1\leq \cdots\leq \yy_\qq$.

Moreover, for all~$\uu=\zz_1\cdots\zz_\pp,\vv=\yy_1\cdots\yy_\qq\in[\XXX]^+$, we define $$[\uu;\vv]=[\zz_1\cdots\zz_\pp;\yy_1\cdots\yy_\qq],$$
 and
  $$\NF(\XXX)=\{[\uu;\vv]\mid \uu,\vv\in[\XXX]^+\}\cup\XXX.$$
Then by~\cite{bicom-basis},  $\NF(\XXX)$ forms a linear basis for~$\BC(\XXX)$.
For the convenience of the readers,  we list the multiplication table below. For all~$\zz,\yy\in \XXX$ and $[\uu;\vv],
[\uu';\vv']\in \NF(\XXX)\sm\XXX$, we have
\begin{equation}\label{multiplication}
(\zz\yy)=[\zz;\yy];\
(\zz[\uu;\vv])
=[\zz\uu;\vv];\
([\uu;\vv]\zz)
=[\uu;\vv\zz];\
([\uu;\vv]
[\uu';\vv'])=
[\uu\uu';
\vv\vv'].
\end{equation}

In light of the multiplication table~\eqref{multiplication} above, we can generalize the notion of multiple in~$\NF(X)$.  More precisely, for all~$\mu, \nu\in \NF(X)$, we call~$\nu$ a \emph{multiple} of~$\mu$, denoted by~$\mu|\nu$, if one of the followings holds:
 \ITEM1  $\mu=\xx\in\XXX$ and $\xx\in\cont(\nu)$;
 \ITEM2 $\mu=[\uu;\vv]$, $\nu=[\uu';\vv']\in\NF(\XXX)$ for some~$\uu,\vv,\uu',\vv'\in[\XXX]^+$,  and we have~$\uu |\uu'$, $\vv |\vv'$ in $[\XXX]^+$.

Let $<$ be a well order on~$\NF(\XXX)$.
For every nonzero polynomial~$\ff=\sum \alpha_{i}\mu_i $ in~$\BC(\XXX)$, where each~$0\neq \alpha_i$ lies in~$\kk$, each~$\mu_i$ lies in~$\NF(\XXX)$ and~$\mu_1>\mu_2>\cdots$,
we call~$\mu_1$ the \emph{leading monomial} of~$\ff$, denoted by~$\bar\ff$; and call~$\alpha_1$ the \emph{leading coefficient} of~$\ff$,  denoted by~$\mathsf{lc}(\ff)$.
We call~$\ff$ a \emph{monic polynomial} if~$\mathsf{lc}(\ff)=1$.  Finally,  a set $\SS\subseteq\BC(\XXX)$ is called a \emph{monic set} if every polynomial in~$\SS$  is monic. By convention, the empty set~$S$ is a monic set but~$\{0\}$ is not.

A well order $<$ on~$\NF(\XXX)$ is called a \emph{monomial order} if  for all monomials~$\mu,\nu,\nu'\in\NF(\XXX)$, we have $\mu<\nu\Rightarrow \overline{(\mu\nu')}<\overline{(\nu\nu')} \mbox{ and }\overline{(\nu'\mu)}<\overline{(\nu'\nu)}$.
 Here we offer two specific examples of monomial order on~$\NF(\XXX)$.

\begin{exam}
Let~$\prec$ be a monomial order on $[\XXX]^+$.
For every $\mu\in \NF(\XXX)$, we define
\begin{displaymath}
\wt(\mu)=
\begin{cases}
(1,1,\xx), &\text{if}\ \mu=\xx\in\XXX,\\
(\ell(u)+\ell(v),\ell(u),\uu,\vv) &\text{if}\ \mu=[\uu;\vv]\in\NF(\XXX)\sm\XXX.
\end{cases}
\end{displaymath}
For all $\mu,\nu\in\NF(\XXX)$, we define
$$\mu<\nu \text{ if and only if } \wt(\mu)<\wt(\nu) \text{ lexicographically},
$$
where the elements in~$[\XXX]^+$ are compared by~$\prec$. Then clearly~$<$ is a monomial order on~$\NF(\XXX)$.
\end{exam}

\begin{exam}
Let $\prec$ be a monomial order on $[\XXX]^+$. For every $\mu\in \NF(\XXX)$, we define
\begin{displaymath}
\wt(\mu)=
\begin{cases}
(\xx,\xx,\xx), &\text{if}\ \mu=\xx\in\XXX,\\
(\uu\vv,\uu,\vv) &\text{if}\ \mu=[\uu;\vv]\in\NF(\XXX)\sm\XXX,
\end{cases}
\end{displaymath}
where $\uu\vv$ is a commutative word in $[\XXX]^+$.
For all $\mu,\nu\in\NF(\XXX)$, we define
$$\mu<\nu \text{ if and only if } \wt(\mu)<\wt(\nu) \text{ lexicographically},
$$
where the elements in~$[\XXX]^+$ are compared by~$\prec$. Then clearly~$<$ is a monomial order on~$\NF(\XXX)$.
\end{exam}

From now on, let~$<$ be a fixed monomial order on~$\NF(X)$ and let~$\SS$ be a monic subset of~$\BC(\XXX)$.  Denote by~$\Id\SS$  the ideal of $\BC(\XXX)$ generated by~$\SS$. For every~$s\in S$, we call~$s$ an~\emph{$S$-polynomial}, and for every~$S$-polynomial~$h$, for every~$\mu$ in~$\NF(\XXX)$, both~$(\mu\hh)$ and~$(\hh\mu)$ are called~\emph{$S$-polynomials}.
Obviously every element in~$\Id\SS$ can be written as a linear combination of $S$-polynomials.
Among~$S$-polynomials, we  introduce some special polynomials in~$\Id\SS$, which will form a linear generating set of~$\Id\SS$ under some conditions.

\begin{defi}
Let  $\SS$ be a monic set in $\BC(\XXX)$.  For every~$\ss\in\SS$,\\
\ITEM1  the polynomial~$\ss$ is called a \emph{normal~$\SS$-polynomial};\\
\ITEM2 if~$\ell(\bar\ss)\geq 2$, then for all~$\zz_1\wdots\zz_p,\yy_1\wdots\yy_q\in\XXX$, $\pp,\qq\in\mathbb{N}$, we call~$\Rnormed{\zz_1\wdots\zz_p,\Lnormed{\ss,\yy_1\wdots\yy_q}}$ a \emph{normal~$\SS$-polynomial}.
\end{defi}

The following lemma  shows that the set:
$$
\Irr(S):=\NF(\XXX)\sm\{\mu\in \NF(\XXX)\mid \mu= \overline{\hh}, \hh \mbox{ is a normal }\SS\mbox{-polynomial} \}
$$
is a linear generating set of the bicommutative algebra~$\BC(\XXX|\SS):=\BC(\XXX)/\Id\SS$.
\begin{lemm}\label{f=Irr+n-s-polynomials}
 For every~$\ff\in \BC(\XXX)$, we have
$$\ff=\sum_{\mu_i\leq \bar f }\alpha_i\mu_i+
\sum_{\ov{h_{j}}\leq\bar \ff}\beta_jh_{j},
$$
where each $\alpha_i,\beta_j\in \kk$, $ \mu_i\in \Irr(\SS)$ and $\hh_{j}$ is
a normal $\SS$-polynomial.  In particular, it follows that   $\Irr(\SS)$ is a set of linear generators of the
algebra $ \BC(\XXX|\SS)$.
\end{lemm}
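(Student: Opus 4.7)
My plan is to prove the lemma by well-founded induction on the leading monomial $\bar{f}$ with respect to the monomial order $<$ on $\NF(\XXX)$. The base case is $\ff=0$, for which both sums are empty (and, by convention, the conditions $\mu_i\leq \bar \ff$ and $\overline{\hh_j}\leq \bar \ff$ are vacuously satisfied).

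For the inductive step, assume the claim holds for every $g\in\BC(\XXX)$ with $\bar g < \bar \ff$. Let $\alpha=\mathsf{lc}(\ff)$. There are two cases, according to whether $\bar \ff$ is irreducible. First, if $\bar \ff\in\Irr(\SS)$, then $\ff-\alpha\bar \ff$ is either $0$ or has $\overline{\ff-\alpha\bar \ff}<\bar \ff$; applying the inductive hypothesis to $\ff-\alpha\bar \ff$ and then adding the term $\alpha\bar \ff$ to the first sum yields the desired decomposition (note $\alpha\bar f\leq \bar f$). Second, if $\bar \ff\notin\Irr(\SS)$, then by the very definition of $\Irr(\SS)$ there exists a normal $\SS$-polynomial $\hh$ with $\overline{\hh}=\bar \ff$. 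I will check that every normal $\SS$-polynomial is monic: the case $\hh=\ss\in\SS$ is immediate because $\SS$ is a monic set, and the case $\hh=\Rnormed{\zz_1\wdots\zz_\pp,\Lnormed{\ss,\yy_1\wdots\yy_\qq}}$ follows by iterated application of the monomial-order axioms together with the multiplication table~\eqref{multiplication}: multiplying a monic polynomial on the left or right by a generator produces a polynomial whose leading term is the image of the old leading term under the corresponding multiplication in $\NF(\XXX)$, with coefficient unchanged (strictly smaller terms remain strictly smaller). Given this, $\ff-\alpha\hh$ has leading monomial strictly less than $\bar \ff$ (or is zero), so the inductive hypothesis applies and, placing $\alpha\hh$ in the second sum (with $\overline{\alpha\hh}=\bar \ff\leq\bar \ff$), we obtain the claimed expression for $\ff$.

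The principal technical point, which is also the main obstacle, is the monicity claim for normal $\SS$-polynomials of the form $\Rnormed{\zz_1\wdots\zz_\pp,\Lnormed{\ss,\yy_1\wdots\yy_\qq}}$: one must verify that the leading monomial really is $\Rnormed{\zz_1\wdots\zz_\pp,\Lnormed{\bar\ss,\yy_1\wdots\yy_\qq}}\in\NF(\XXX)$ and that the leading coefficient stays equal to $\mathsf{lc}(\ss)=1$. This is precisely where the compatibility of $<$ with left and right multiplication, inherent in the monomial-order definition, is essential; the multiplication table~\eqref{multiplication} ensures that products of basis monomials are again basis monomials (so no collapse or rescaling occurs).

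Finally, the ``in particular'' assertion is an immediate corollary: for every $\ff\in\BC(\XXX)$, the sum $\sum_j\beta_j\hh_j$ lies in $\Id\SS$, because every normal $\SS$-polynomial is a product of an element of $\SS$ with elements of $\NF(\XXX)\cup\{1\}$ and therefore belongs to $\Id\SS$. Hence modulo $\Id\SS$ the image of $\ff$ equals $\sum_i\alpha_i\mu_i$ with $\mu_i\in\Irr(\SS)$, showing that $\Irr(\SS)$ linearly generates $\BC(\XXX|\SS)$.
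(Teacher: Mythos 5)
Your argument is correct and is essentially the paper's own proof, which simply says ``the result follows by induction on $\bar\ff$'': you carry out exactly that induction on the leading monomial, splitting on whether $\bar\ff\in\Irr(\SS)$ and subtracting either $\alpha\bar\ff$ or $\alpha\hh$ for a normal $\SS$-polynomial $\hh$ with $\ov\hh=\bar\ff$. The extra detail you supply---that a normal $\SS$-polynomial is monic with leading monomial $\Rnormed{\zz_1\wdots\zz_\pp,\Lnormed{\bar\ss,\yy_1\wdots\yy_\qq}}$, via the monomial-order axioms and the multiplication table---is precisely the routine verification the paper leaves implicit, so no further changes are needed.
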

\begin{proof}
The result follows by induction on $\bar \ff $.
\end{proof}
\begin{defi}\label{defi-gsb}
  Let~$I$ be an ideal of~$\BC(\XXX)$. A  monic set~$S$ is called a \emph{Gr\"obner-Shirshov basis} (in~$\BC(\XXX)$) for~$\BC(\XXX)/I$, if~$\Id\SS=I$ and for every nonzero polynomial~$\ff\in \Id\SS$, we have~$\overline {\ff}=\overline{\hh}$ for some normal~$\SS$-polynomial~$\hh$.
\end{defi}
We shall see that, if~$\SS$ is a \gsb\ for~$\BC(\XXX|\SS)$, then~$\Irr(\SS)$ forms a linear basis for~$\BC(\XXX|\SS)$. In particular, if~$S$ is an empty set, then~$S$ is a \gsb\ for~$\BC(\XXX|\SS)$ and  we have~$\Irr(\SS)=\NF(\XXX)$.  But the conditions in Definition~\ref{defi-gsb} is not tractable in practice. Therefore,  we shall introduce certain special polynomials (hereafter called compositions) in~$\Id\SS$, such that if these polynomials satisfy some easily managed conditions, then~$S$ forms a \gsb.

Before going there, we introduce the notion of least common multiple in~$\NF(\XXX)\sm \XXX$.
For all $\uu,\vv\in[\XXX]$, we denote by $\lcm(\uu,\vv)$  the least common multiple of~$u$ and~$v$ in~$[\XXX]$.
 For all~$\mu=[\uu;\vv],\nu=[\uu';\vv']\in\NF(\XXX)$, where~$\uu,\vv,\uu',\vv'$ lie in $[\XXX]^+$, we call $[\lcm(\uu,\uu');\lcm(\vv,\vv')]$
the \emph{least common multiple} of~$\mu$ and~$\nu$, and denote it by~$\lcm(\mu,\nu)$.

\begin{defi}\label{defi-gsb}
Let $\SS$ be a monic subset of~$ \BC(\XXX)$. For all  $\ff,\gg\in \SS$, we define \emph{compositions} of~$S$ as follows:

\ITEM1 If $\ell(\ov{\ff})= 1$, then for every $\zz\in\XXX$, we call $(\zz\ff)$ a \emph{left multiplication composition} and~$(\ff\zz)$ a \emph{right multiplication composition};

\ITEM2 If $\overline{\ff}=\overline{\gg}\in\XXX$, then we call $(\ff,\gg)_{\overline{\ff}}=\ff-\gg$ an \emph{inclusion composition};

\ITEM3 If $\ell(\ov{\ff})\geq 2$ and $\ff\notin \Span_\kk(\NF(\XXX)\sm \XXX)$, where~$\Span_\kk(\NF(\XXX)\sm \XXX)$ is the subspace of~$\BC(\XXX)$ spanned by~$\Span_\kk(\NF(\XXX)\sm \XXX)$, then for all $\yy,\zz\in\XXX$, we call
$$(f,f)_{\ov{(\yy(\ff\zz))}}=(\yy(\ff\zz))-((\yy\ff)\zz)$$ a \emph{multiplication composition};

\ITEM4
If $\bar\ff=[\uu,\vv],\bar\gg=[\uu',\vv']\in\NF(\XXX)\sm\XXX$,
and $\ell(\lcm(\bar\ff,\bar\gg))<
\ell(\bar\ff)
+\ell(\bar\gg)$, where~$\uu,\vv,\uu',\vv'\in[\XXX]^+$, $\lcm(\uu,\uu')=\uu\aa_1\cdots\aa_{p}
=\uu'\bb_1\cdots\bb_{m}$,
$\lcm(\vv,\vv')=\vv\cc_1\cdots\cc_{q}=\vv'\dd_1\cdots\dd_{n}$,
and~$
\aa_1\wdots\aa_{p}$,
$\bb_1\wdots\bb_{m}$,
$\cc_1\wdots\cc_{q}$,
$\dd_1\wdots\dd_{n}
\in\XXX$, $p,q,m,n\in\mathbb{N}$, then we call
$$(\ff,\gg)_{\lcm(\bar\ff,\bar\gg)}=
\Rnormed{\aa_1\wdots\aa_{p},\Lnormed{\ff,\cc_1\wdots\cc_{q}
}}
-\Rnormed{\bb_1\wdots\bb_{m},\Lnormed{\gg,\dd_1\wdots\dd_{n}
}}
$$
 an \emph{intersection composition}.
 \end{defi}

  A composition~$h$ in \ITEM1 of the form~$(zf)$ or~$(fz)$ is called \emph{trivial}  (modulo $\SS$) if~$h$ can be written as a linear combination of normal~$\SS$-polynomials such that their leading monomials are $\leq\ov\hh$. And a composition~$h$ in \ITEM2-\ITEM4 of the form~$(\ff_1,\ff_2)_{\mu}$ is called \emph{trivial} (modulo~$\SS$ with respect to~$\mu$) if~$\hh$ can be written as a linear combination of normal~$\SS$-polynomials such that their leading monomials are $<\mu$.
 Finally, for every~$\hh\in\BC(\XXX)$ and~$\mu\in \NF(\XXX)$,  the polynomial~$h$ is said to be \emph{trivial modulo $\SS$ with respect to~$\mu$}, denoted by
$$\hh \equiv0  \ \mod(\SS,\mu), $$  if~$\hh$ can be written as a linear combination of normal~$\SS$-polynomials such that their leading monomials are $<\mu$;

Now we show that, under certain conditions,  we can rewrite some~$S$-polynomials into normal $S$-polynomials.

\begin{lemm}\label{Ids}
Let $\SS$ be a monic subset of $\BC(\XXX)$ such that all the left multiplication compositions,  right multiplication compositions and multiplication compositions of $\SS$ are trivial. Then the followings hold:

\ITEM1 Every $\SS$-polynomial~$\hh$ can be written as a linear combination of normal~$S$-polynomials whose leading monomials are~$\leq \ov\hh$.

\ITEM2
If $\hh=\Rnormed{\zz_1\wdots\zz_{p-1},\zz_p,
\Lnormed{\ss,\yy_1,\yy_2\wdots\yy_q}}$, where $\ss\in\SS$,  $\ell(\overline{\ss})\geq2$, $\zz_1\wdots\zz_p,\yy_1\wdots\yy_q\in\XXX$
and~$\pp,\qq\in \mathbb{\NNN}^+$, then we have~$
\hh-\Rnormed{\zz_1\wdots\zz_{p-1},\Lnormed{\zz_p,\ss,\yy_1,\yy_2\wdots\yy_q}}
\equiv0  \ \mod(\SS,\ov\hh)$.
\end{lemm}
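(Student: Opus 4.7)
My plan is to prove (i) first by a double induction, and then to deduce (ii) from (i) together with the triviality of multiplication compositions. The main technical tool for (i) will be a sub-claim handling multiplications by a single letter, from which general $\NF(\XXX)$ multiplications can be reduced via the bicommutative identities of Lemma~\ref{lemm-ass}.

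I would first establish the sub-claim: for every normal $\SS$-polynomial $\hh$ and every letter $\xx\in\XXX$, both $(\xx\hh)$ and $(\hh\xx)$ admit a decomposition as linear combinations of normal $\SS$-polynomials whose leading monomials are $\le\ov{(\xx\hh)}$ and $\le\ov{(\hh\xx)}$ respectively. This splits into three cases by the shape of $\hh$: if $\hh=\ss$ with $\ell(\ov\ss)=1$, then $(\xx\ss)$ and $(\ss\xx)$ are precisely left and right multiplication compositions, and the triviality hypothesis applies directly; if $\hh=\ss$ with $\ell(\ov\ss)\ge2$, then $(\xx\ss)=\Rnormed{\xx,\ss}$ and $(\ss\xx)=\Lnormed{\ss,\xx}$ are themselves normal $\SS$-polynomials; and for a general $\hh=\Rnormed{\zz_1\wdots\zz_p,\Lnormed{\ss,\yy_1\wdots\yy_q}}$, the product $(\xx\hh)$ is normal by prepending $\xx$ to the $R$-list, while $(\hh\xx)$ is handled by iteratively applying Lemma~\ref{lemm-ass}(i) to pull $\xx$ through each $R$-layer. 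The pull-through works termwise whenever every intermediate monomial lies in $\NF(\XXX)\sm\XXX$, which is automatic when $\qq\ge1$ (each monomial of $\Lnormed{\ss,\yy_1\wdots\yy_q}$ is a genuine product); the remaining case $\qq=0$ produces discrepancies coming from letter summands of $\ss$, which are precisely multiplication compositions and hence trivial by hypothesis.

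With the sub-claim in hand I would prove (i) by structural induction on the build structure of the $\SS$-polynomial. The base $\ss\in\SS$ is already normal. For $(\mu\hh)$ or $(\hh\mu)$ the outer inductive hypothesis rewrites $\hh$ as a combination of normal $\SS$-polynomials, reducing the task to the case of normal $\hh$. I would then perform a secondary induction on $\ell(\mu)$: the base $\ell(\mu)=1$ is the sub-claim, and for $\ell(\mu)\ge2$ I would decompose $\mu=(\mu_1\mu_2)$ as a nonassociative product with $\ell(\mu_i)<\ell(\mu)$; right commutativity then gives $(\mu\hh)=((\mu_1\hh)\mu_2)$, so applying the inner IH first to $\mu_1\hh$ and then to the right-multiplication of the resulting normal $\SS$-polynomials by $\mu_2$ (which, unwinding, reduces to iterated single-letter multiplications) completes the step. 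The expression $(\hh\mu)$ is handled symmetrically via left commutativity. The monomial-order axiom ensures all intermediate leading monomials stay within the prescribed bound.

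For (ii), since $\Rnormed{\zz_1\wdots\zz_{p-1},\cdot}$ is linear in its last slot, writing the difference as $\Rnormed{\zz_1\wdots\zz_{p-1},\Delta_q}$ with $\Delta_q:=(\zz_p\Lnormed{\ss,\yy_1\wdots\yy_q})-\Lnormed{\zz_p,\ss,\yy_1\wdots\yy_q}$ reduces (ii) to showing $\Delta_q\equiv 0\mod(\SS,\ov{(\zz_p\Lnormed{\ss,\yy_1\wdots\yy_q})})$, which I would prove by induction on $\qq$. For $\qq=1$, $\Delta_1=(\zz_p(\ss\yy_1))-((\zz_p\ss)\yy_1)$ vanishes termwise on the summands of $\ss$ lying in $\Span_\kk(\NF(\XXX)\sm\XXX)$ by Lemma~\ref{lemm-ass}(i), while on the letter components of $\ss$ it coincides with the multiplication composition $(\ss,\ss)_{\ov{(\zz_p(\ss\yy_1))}}$, trivial by hypothesis. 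For $\qq\ge2$, every monomial of $\Lnormed{\ss,\yy_1\wdots\yy_{q-1}}$ lies in $\NF(\XXX)\sm\XXX$, so Lemma~\ref{lemm-ass}(i) termwise yields $\Delta_q=\Delta_{q-1}\cdot\yy_q$, and the inductive hypothesis combined with part~(i) applied to each summand closes the step. Finally, wrapping by $\Rnormed{\zz_1\wdots\zz_{p-1},\cdot}$ and invoking part~(i) once more produces the desired decomposition of $\hh-\Rnormed{\zz_1\wdots\zz_{p-1},\Lnormed{\zz_p,\ss,\yy_1\wdots\yy_q}}$ as normal $\SS$-polynomials with leading monomials strictly below $\ov\hh$. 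The main hurdle will be the careful bookkeeping of leading-monomial bounds across the nested reductions, since the triviality guarantees from the hypotheses must be threaded through both the secondary induction on $\ell(\mu)$ in (i) and the induction on $\qq$ in (ii).
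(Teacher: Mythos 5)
Your proposal is correct and follows essentially the same route as the paper's proof: part (i) by induction on $\ell(\mu)$ with the single-letter base case split according to the shape of the normal $\SS$-polynomial (left/right multiplication compositions when $\ell(\ov\ss)=1$, direct normality when $\qq\geq1$, and the multiplication composition absorbing the $\qq=0$ discrepancy), and part (ii) by isolating the difference $(\zz_p(\ss\yy_1))-((\zz_p\ss)\yy_1)$ and re-normalizing via part (i) and the monomial-order property. Your peeling of the letters $\yy_q$ one at a time in (ii) is only a cosmetic variant of the paper's single application of Lemma~\ref{lemm-ass}\ITEM2.
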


\begin{proof}\ITEM1 Since every element in~$S$ is a normal~$S$-polynomial, it suffices to show that, for every normal~$S$-polynomial~$\hh$, for every~$\mu\in \NF(\XXX)$,  the polynomial~$(\mu\hh)$ (resp. $(\hh\mu)$)
can be written as a linear combination of normal~$\SS$-polynomials whose leading monomials are less than or equal to $\overline{(\mu\hh)}$ (resp. $\overline{(\hh\mu)}$).

Assume that $\hh$ is a normal $\SS$-polynomial.
We use induction on $\ell(\mu)$ to prove \ITEM1.
Suppose that $\ell(\mu)=1$ and $\mu=\xx\in\XXX$.
If $\ell(\bar\hh)=1$, that is, $\hh=\ss\in\SS$ and $\ell(\bar\ss)=1$, then we have $(\mu\hh)=(\xx\ss)$ and $(\hh\mu)=(\ss\xx)$. Since all left multiplication compositions and  right multiplication compositions of $\SS$ are trivial, there is nothing to prove.
If  $\ell(\bar\hh)>1$, then we may assume
$$\hh=\Rnormed{\zz_1\wdots\zz_{p},\Lnormed{\ss,\yy_1\wdots\yy_q}},$$ where~$\zz_1\wdots\zz_{p},\yy_1\wdots\yy_q\in\XXX$ and  $\pp,\qq\in \mathbb{\NNN}$.
Thus~$(\mu\hh)=(\xx\hh)=\Rnormed{\xx,\zz_1\wdots\zz_{p},
\Lnormed{\ss,\yy_1\wdots\yy_q}}$ is already a normal $\SS$-polynomial.
Moreover, since~$<$ is a monomial order, it follows immediately that, for an arbitrary normal~$S$-polynomial~$f$,
for all~$\xx_{i_1}\wdots\xx_{i_t}\in \XXX$,  the polynomial~$\Rnormed{\xx_{i_1}\wdots\xx_{i_t},f}$ can be written as a linear combination of normal~$\SS$-polynomials whose leading monomials are less than or equal to $\ov{\Rnormed{\xx_{i_1}\wdots\xx_{i_t},\ff}}$.

 If $\qq=\pp=0$ or~$\qq>0$, then by Lemma~\ref{lemm-ass}\ITEM2, we have
 $$(\hh\mu)=(\ss\xx)  \mbox{  or  }  (\hh\mu)=(\hh\xx)=\Rnormed{\zz_1\wdots\zz_{p},
\Lnormed{\ss,\yy_1\wdots\yy_q,\xx}},$$ both of which are  already normal $\SS$-polynomials.  Note that every multiplication composition of $\SS$ is trivial. If~$\qq=0$ and~$\pp>0$, then we may assume
$$((\zz_{p}\ss)\xx)-(\zz_{p}(\ss\xx))=\sum_i \alpha_i \hh_i,$$
where each~$\alpha_i$ belongs to~$\kk$ and each~$\hh_i$ is a normal $\SS$-polynomial with~$\ov{\hh_i}<\ov{((\zz_{p}\ss)\xx)}$.
By  Lemma~\ref{lemm-ass}\ITEM2 again, we obtain
\begin{align*}
(\hh\mu)
&=(\Rnormed{\zz_1\wdots\zz_{p-1},\Lnormed{\zz_{p},\ss}}\xx)\\
&=\Rnormed{\zz_1\wdots\zz_{p-1},\Lnormed{\zz_{p},\ss,\xx}}\\
&=\Rnormed{\zz_1\wdots\zz_{p-1},((\zz_{p}\ss)\xx)}\\
&=\Rnormed{\zz_1\wdots\zz_{p-1},
(\zz_{p}(\ss\xx))
}+\sum_i \alpha_i\Rnormed{\zz_1\wdots\zz_{p-1},
\hh_i
}\\
&=\Rnormed{\zz_1\wdots\zz_{p},\Lnormed{\ss,\xx}}+\sum \gamma_t\hh''_t,
\end{align*}
where~$\Rnormed{\zz_1\wdots\zz_{p},\Lnormed{\ss,\xx}}$ is a normal~$\SS$-polynomial satisfying~$\ov{\Rnormed{\zz_1\wdots\zz_{p},\Lnormed{\ss,\xx}}}=\overline{(\hh\mu)}$,
and each $\gamma_t$ lies in~$\kk$, each $\hh''_t$ is a normal $\SS$-polynomial satisfying
$$\ov{\hh''_t} \leq\overline{\Rnormed{\zz_1\wdots\zz_{p-1},\hh_i}}
<\overline{\Rnormed{\zz_1\wdots\zz_{p-1},((\zz_{p}\ss)\zz)}}
=\overline{(\hh\xx)}=\overline{(\hh\mu)}.$$

Now suppose that $\ell(\mu)>1$ and $\mu=(\mu_1\mu_2)$, where $\mu_1,\mu_2\in\NF(\XXX)$. Since~$\ell(\mu_1)<\ell(\mu)$, by induction hypothesis, we may assume $$(\mu\hh)=((\mu_1\mu_2)\hh)=((\mu_1\hh)\mu_2)=\sum_i \alpha_i(\hh_i\mu_2),$$
where each $\alpha_i$ belongs to $\kk$, each $\hh_i$ is a normal $\SS$-polynomial satisfying~$\ov{\hh_i}\leq\overline{(\mu_1\hh)}$. Similarly we have~$(\hh_i\mu_2)=\sum_j \beta_{ij}\hh'_{ij}$, where each $\beta_{ij}\in\kk$, each~$\hh'_{ij}$ is a normal $\SS$-polynomial with~$\ov{\hh'_{ij}}\leq \overline{(\hh_i\mu_2)}\leq \overline{(\ov{(\mu_1\hh)}\mu_2)}=\overline{((\mu_1\hh)\mu_2)}
=\overline{(\mu\hh)}$.

Finally, note that~$(\hh\mu)=(\hh(\mu_1\mu_2))=(\mu_1(\hh\mu_2))$, the claim follows immediately.

\ITEM2
We use induction on $\pp+\qq$.
If $\pp=\qq=1$, then since every multiplication composition of $\SS$ is trivial,
 we have
$$\Rnormed{\zz_1,\Lnormed{\ss,\yy_1}}-
\Lnormed{\zz_1,\ss,\yy_1}\equiv 0 \mod(\SS,\overline{\Rnormed{\zz_1,
\Lnormed{\ss,\yy_1}}}).$$

In particular, if $\pp+\qq>2$, then we assume
$$((\zz_p(\ss\yy_1))
-((\zz_p\ss)\yy_1)) =\sum_i\beta_ih_i',$$
where each~$\beta_i$ lies in~$k$ and each~$h_i'$ is a normal~$S$-polynomial satisfying~$\ov{h_i'}\leq \ov{((\zz_p(\ss\yy_1))}$.
By Lemma~\ref{lemm-ass}, we have
\begin{align*}
&\hh
-\Rnormed{\zz_1\wdots\zz_{p-1},\Lnormed{((\zz_p\ss)\yy_1),\yy_2\wdots\yy_q}}\\
=&  \Rnormed{\zz_1\wdots\zz_{p-1},\Lnormed{(\zz_p(\ss\yy_1)),\yy_2\wdots\yy_q}}
-\Rnormed{\zz_1\wdots\zz_{p-1},\Lnormed{((\zz_p\ss)\yy_1),\yy_2\wdots\yy_q}}\\
=&\Rnormed{\zz_1\wdots\zz_{p-1},\Lnormed{((\zz_p(\ss\yy_1))
-((\zz_p\ss)\yy_1)),\yy_2\wdots\yy_q}}\\
=&\sum_i \beta_i\Rnormed{\zz_1\wdots\zz_{p-1},\Lnormed{\hh'_i,\yy_2\wdots\yy_q}}.
\end{align*}
Since~$<$ is a monomial order,  the claim follows by~\ITEM1.
\end{proof}

The following lemma says that, under certain conditions, two normal~$S$-polynomials with the same leading monomial are not of significant difference.

\begin{lemm}\label{key-lemma}
Let $\SS$ be a monic subset of $\BC(\XXX)$ such that all compositions of $S$ are trivial, and let~$\hh_{1}$, $\hh_{2}$ be two normal $\SS$-polynomials. If~$\overline{\hh_{1}}=\overline{\hh_{2}}$, then we have
$$\hh_{1}-\hh_{2}\equiv0 \mod (\SS,\overline{h_{1}}).$$
\end{lemm}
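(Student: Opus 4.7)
The plan is to induct on the common leading monomial $\mu := \overline{h_1} = \overline{h_2}$ under the monomial order on $\NF(\XXX)$; the inductive step amounts to showing that $h_1 - h_2$ is expressible as a linear combination of normal $\SS$-polynomials with leading monomials strictly below $\mu$. Each $h_i$ is either an element of $\SS$ itself or is of the form $\Rnormed{z^{(i)}_1 \wdots z^{(i)}_{p_i}, \Lnormed{s_i, y^{(i)}_1 \wdots y^{(i)}_{q_i}}}$ with $\ell(\overline{s_i}) \geq 2$ and $p_i + q_i \geq 1$, yielding three essential cases.

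In Case A (both $h_i \in \SS$): if $\mu \in \XXX$, then $h_1 - h_2$ is precisely the inclusion composition $(h_1, h_2)_\mu$, trivial by hypothesis; otherwise $\mu \in \NF(\XXX) \sm \XXX$, $\lcm(\overline{h_1}, \overline{h_2}) = \mu$ with $\ell(\mu) < \ell(\overline{h_1}) + \ell(\overline{h_2})$, and $h_1 - h_2$ is the intersection composition $(h_1, h_2)_\mu$ with all four auxiliary words empty. In Case B (without loss of generality $h_1 = s_1 \in \SS$ while $h_2$ is strictly wrapped around some $s_2 \in \SS$): the condition $\overline{h_1} = \overline{h_2}$ forces $\overline{s_2} \mid \overline{s_1}$ and hence $\lcm(\overline{s_1}, \overline{s_2}) = \overline{s_1}$ with $\ell(\lcm) < \ell(\overline{s_1}) + \ell(\overline{s_2})$, so that the intersection composition $(s_1, s_2)_{\overline{s_1}}$ is defined and equals $h_1 - h_2$, trivial by hypothesis. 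Case C is where both $h_i$ are strictly wrapped around $s_1, s_2$ with $\ell(\overline{s_i}) \geq 2$; writing $\overline{s_i} = [u_i; v_i]$, I split on whether $\ell(\lcm(\overline{s_1}, \overline{s_2})) < \ell(\overline{s_1}) + \ell(\overline{s_2})$ (in which case the trivial intersection composition at $\lcm(\overline{s_1}, \overline{s_2})$ is ``wrapped'' via Lemma~\ref{Ids}\ITEM2 and Lemma~\ref{lemm-ass} to reconstruct $h_1 - h_2$ modulo smaller terms) or $\ell(\lcm) = \ell(\overline{s_1}) + \ell(\overline{s_2})$ (in which case $\overline{s_1}$ and $\overline{s_2}$ occupy disjoint letters inside $\mu$, and the bicommutative identities of Lemma~\ref{lemm-ass}, combined with triviality of the relevant multiplication compositions, rewrite $h_1$ into the shape of $h_2$ modulo a sum of normal $\SS$-polynomials with leading monomials below $\mu$).

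The main obstacle is the bookkeeping in Case C: tracking how the outer letters $z^{(i)}_*, y^{(i)}_*$ distribute around $\lcm(\overline{s_1}, \overline{s_2})$ inside $\mu$, and verifying that each re-bracketing produced by Lemma~\ref{Ids} or by Lemma~\ref{lemm-ass} introduces only normal $\SS$-polynomials with leading monomial strictly below $\mu$. Once this bookkeeping is controlled, the triviality of the intersection composition at $\lcm(\overline{s_1}, \overline{s_2})$, combined with the induction hypothesis applied to those strictly smaller-leading-monomial summands, gives $h_1 - h_2 \equiv 0 \ \mod(\SS, \mu)$ and closes the argument.
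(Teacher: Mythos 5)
Your proposal is correct and follows essentially the same route as the paper: treat the case where the common leading monomial is a letter via the inclusion composition, realize $h_1-h_2$ (after factoring out the common wrapping letters) as a wrapped intersection composition at $\lcm(\overline{\ss_1},\overline{\ss_2})$ when the inner leading monomials overlap, and handle the disjoint case via the identities of Lemma~\ref{lemm-ass} together with Lemma~\ref{Ids}. The only cosmetic difference is your outer induction on $\mu$, which the paper does not need: once the difference is expressed through trivial compositions and rewritten by Lemma~\ref{Ids}\ITEM1, all summands already have leading monomials $<\overline{\hh_1}$, which is exactly the required conclusion.
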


\begin{proof}
If~$\ell(\overline{\hh_1})=1=\ell(\overline{\hh_2})$, then we may assume~$\hh_{1}=\ss_1$ and~$\hh_{2}=\ss_2$, where~$\ss_1,\ss_2\in\SS$.  Since every inclusion composition of $\SS$ is trivial, we have~$\hh_{1}-\hh_{2}\equiv0 \mod (\SS,\overline{h_{1}})$.
Now we suppose $\ell(\overline{\hh_1})>1$. Then we may assume
$$\hh_1=\Rnormed{\zz_1\wdots\zz_p,
\Lnormed{\Rnormed{\aa_1\wdots\aa_{m},
\Lnormed{\ss_1,\bb_1\wdots\bb_n
}
},\yy_1\wdots\yy_q
}
},$$
$$\hh_2=\Rnormed{\zz_1\wdots\zz_p,
\Lnormed{\Rnormed{\aa'_1\wdots\aa'_{\mm'},
\Lnormed{\ss_2,\bb'_1\wdots\bb'_{\nn'}
}
},\yy_1\wdots\yy_q
}
},$$
where $\ss_1,\ss_2\in\SS$, $\ell(\overline{\ss_1}), \ell(\overline{\ss_2})>1$, $\zz_1\wdots\zz_p,\yy_1\wdots\yy_q,
\aa_1\wdots\aa_{m},\bb_1\wdots\bb_n,
\aa'_1\wdots\aa'_{\mm'},\bb'_1\wdots\bb'_{\nn'}$ lie in~$\XXX$, $\pp,\qq,\nn,\mm,\nn',\mm'\in \mathbb{N}$ and
$\{\aa_1\wdots\aa_{m}\}\cap\{\aa'_1\wdots\aa'_{\mm'}\}=
\{\bb_1\wdots\bb_n\}\cap\{\bb'_1\wdots\bb'_{\nn'}\}=\emptyset$.
In particular, we deduce
$$\lcm(\overline{\ss_1},\overline{\ss_2})=\overline{\Rnormed{\aa_1\wdots\aa_{m},
\Lnormed{\ss_1,\bb_1\wdots\bb_n}}}=
\overline{\Rnormed{\aa'_1\wdots\aa'_{\mm'},
\Lnormed{\ss_2,\bb'_1\wdots\bb'_{\nn'}}}}.$$
There are two cases to consider.
If $\nn+\mm<\ell(\overline{\ss_2})$, then
we have
$$(\ss_1,\ss_2)_{\lcm(\overline{\ss_1},\overline{\ss_2})}=
 \Rnormed{\aa_1\wdots\aa_{m},\Lnormed{\ss_1,\bb_1\wdots\bb_n}}-
 \Rnormed{\aa'_1\wdots\aa'_{\mm'},\Lnormed{\ss_2,\bb'_1\wdots\bb'_{\nn'}}}
 =\sum_j \beta_j\hh'_j,$$ where
each~$\beta_j$ lies in~$\kk$, each $\hh'_j$ is a normal $\SS$-polynomial satisfying~$\ov{\hh'_j}<\lcm(\overline{\ss_1},\overline{\ss_2})$.
So we obtain
$$
\hh_{1}-\hh_{2}
=\Rnormed{\zz_1\wdots\zz_p,
\Lnormed{(\ss_1,\ss_2)_{\lcm(\overline{\ss_1},
\overline{\ss_2})},\yy_1\wdots\yy_q}}
=\sum_j \beta_j\Rnormed{\zz_1\wdots\zz_p,
\Lnormed{\hh'_j,\yy_1\wdots\yy_q}}.
$$
By Lemma~\ref{Ids}\ITEM1, each $\Rnormed{\zz_1\wdots\zz_p,
\Lnormed{\hh'_j,\yy_1\wdots\yy_q}}$ can be written as a linear combination of normal $\SS$-polynomials such that their leading monomials are
$$\leq\overline{
\Rnormed{\zz_1\wdots\zz_{p-1},\zz_p,
\Lnormed{\hh'_j,\yy_1\wdots\yy_q}}}<
\overline{\Rnormed{\zz_1\wdots\zz_p,
\Lnormed{\lcm(\overline{\ss_1},\overline{\ss_2}),\yy_1\wdots\yy_q}}}
=\overline{h_{1}}.$$
Therefore, we have $\hh_{1}-\hh_{2}\equiv0 \mod (\SS,\overline{h_{1}})$.

If $n + m =\ell(\overline{s_2})$, then we deduce~$n,m,n',m'\in \mathbb{N}^{+}$,
$\overline{\ss_2}=\Rnormed{\aa_1\wdots\aa_{m-1},
\Lnormed{\aa_{m},\bb_1\wdots\bb_n
}
}$ and $\overline{\ss_1}=\Rnormed{\aa'_1\wdots\aa'_{\mm'-1},
\Lnormed{\aa'_{\mm'},\bb'_1\wdots\bb'_{\nn'}
}
}$.
Thus
we have
$$(\ss_1\overline{\ss_2})
=\Rnormed{\aa_1\wdots\aa_{m},\Lnormed{\ss_1,\bb_1\wdots\bb_n}},\
(\overline{\ss_1}\ss_2)=\Rnormed{\aa'_1\wdots\aa'_{\mm'},
\Lnormed{\ss_2,\bb'_1\wdots\bb'_{\nn'}
}}$$
and
$$\hh_{1}=\Rnormed{\zz_1\wdots\zz_{p-1},\zz_p,
\Lnormed{\ss_1,\overline{\ss_2},\yy_1\wdots\yy_q}}.$$
By Lemma \ref{Ids}\ITEM2,
we have
\begin{align*}
\hh_{2}&=\Rnormed{\zz_1\wdots\zz_p,\aa'_1\wdots\aa'_{\mm'},
\Lnormed{\ss_2,\bb'_1\wdots\bb'_{\nn'},\yy_1\wdots\yy_q
}}\\
&=\Rnormed{\zz_1\wdots\zz_p,\aa'_1\wdots\aa'_{\mm'-1},
\Lnormed{\aa'_{\mm'},\ss_2,\bb'_1\wdots\bb'_{\nn'},\yy_1\wdots\yy_q
}}
+\sum_i \alpha_i\hh'_i\\
&=\Rnormed{\zz_1\wdots\zz_{p-1},\zz_p,
\Lnormed{\overline{\ss_1},\ss_2,\yy_1\wdots\yy_q
}}+\sum_i \alpha_i\hh'_i,
\end{align*}
 where each $\alpha_i$ lies in~$\kk$,  and each~$\hh'_i$ is a normal $\SS$-polynomial such that $\overline{\hh'_i}<\overline{\hh_{2}}$.
Finally we obtain
\begin{align*} &\hh_{1}-\hh_{2}&\\
=&\Rnormed{\zz_1\wdots\zz_{p-1},\zz_p,
\Lnormed{\ss_1,\overline{\ss_2},\yy_1\wdots\yy_q}}-
\Rnormed{\zz_1\wdots\zz_{p-1},\zz_p,
\Lnormed{\overline{\ss_1},\ss_2,\yy_1\wdots\yy_q
}}-\sum_i \alpha_i\hh'_i&\\
=&\Rnormed{\zz_1\wdots\zz_{p-1},\zz_p,
\Lnormed{\ss_1,\overline{\ss_2},\yy_1\wdots\yy_q}}+
\Rnormed{\zz_1\wdots\zz_{p-1},\zz_p,
\Lnormed{\ss_1,\ss_2,\yy_1\wdots\yy_q}}&\\
&-
\Rnormed{\zz_1\wdots\zz_{p-1},\zz_p,
\Lnormed{\ss_1,\ss_2,\yy_1\wdots\yy_q}}-
\Rnormed{\zz_1\wdots\zz_{p-1},\zz_p,
\Lnormed{\overline{\ss_1},\ss_2,\yy_1\wdots\yy_q
}}-\sum_i \alpha_i\hh'_i&\\
=&\Rnormed{\zz_1\wdots\zz_{p-1},\zz_p,
\Lnormed{\ss_1,(\overline{\ss_2}-\ss_2), \yy_1\wdots\yy_q}}&\\
&+\Rnormed{\zz_1\wdots\zz_{p-1},\zz_p,
\Lnormed{(\ss_1-\overline{\ss_1}),\ss_2,\yy_1\wdots\yy_q}}
-\sum_i \alpha_i\hh'_i .&
\end{align*}
Since we have
 $$\ov{\Rnormed{\zz_1\wdots\zz_{p-1},\zz_p,
\Lnormed{\ss_1,(\overline{\ss_2}-\ss_2), \yy_1\wdots\yy_q}}}
<\ov{\Rnormed{\zz_1\wdots\zz_{p-1},\zz_p,
\Lnormed{\ss_1,\overline{\ss_2}, \yy_1\wdots\yy_q}}}
=\ov{h_1}$$
and $$\ov{\Rnormed{\zz_1\wdots\zz_{p-1},\zz_p,
\Lnormed{(\ss_1-\overline{\ss_1}),\ss_2,\yy_1\wdots\yy_q}}}
<\ov{\Rnormed{\zz_1\wdots\zz_{p-1},\zz_p,
\Lnormed{ \overline{\ss_1},\ss_2,\yy_1\wdots\yy_q}}}
=\ov{h_1},$$
the lemma follows by Lemma~\ref{Ids}\ITEM1.
\end{proof}

 Now we are ready to prove the Composition-Diamond lemma for bicommutative algebras.
\begin{theorem}\label{cd-lemma for BC}  \emph{(}Composition-Diamond lemma for bicommutative algebras\emph{)}
Let $\SS$ be a monic subset of $\BC(\XXX)$. Let $\Id\SS$ be the ideal of $\BC(\XXX)$ generated by $\SS$. Then the followings are equivalent.

\ITEM1 All the compositions of~$\SS$ are trivial.

\ITEM2 The set~$\SS$ is a Gr\"{o}bner--Shirshov basis  for $\BC(\XXX)/\Id\SS$, that is, for every nonzero polynomial~$\ff\in \Id\SS$,  we have~$\overline {\ff}=\overline{\hh}$ for some normal~$\SS$-polynomial~$\hh$.

\ITEM3 The set $\Irr(\SS)=\NF(\XXX)\sm\{\mu\in \NF(\XXX)\mid \mu= \overline{\hh}, \hh \mbox{ is a normal }\SS\mbox{-polynomial} \}$ forms  a linear basis of the algebra $\BC(\XXX | \SS):=\BC(\XXX)/\Id\SS$.

\end{theorem}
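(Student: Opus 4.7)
The plan is to prove the cycle \ITEM1 $\Rightarrow$ \ITEM2 $\Rightarrow$ \ITEM3 $\Rightarrow$ \ITEM1, so that the heavy lifting is isolated in the first implication while the other two follow by standard leading-monomial arguments built on Lemmas~\ref{f=Irr+n-s-polynomials}, \ref{Ids} and~\ref{key-lemma}.

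For \ITEM1 $\Rightarrow$ \ITEM2, I would take an arbitrary nonzero $\ff\in \Id\SS$ and first express it as a finite $\kk$-linear combination of $\SS$-polynomials; invoking Lemma~\ref{Ids}\ITEM1, this can be rewritten as $\ff=\sum_i \alpha_i \hh_i$, where every $\hh_i$ is a normal $\SS$-polynomial. Set $\mu=\max_i \overline{\hh_i}$, so that $\overline{\ff}\leq\mu$. The plan is to run induction on $\mu$ (using that $<$ is a well order). If $\overline{\ff}=\mu$, then $\overline{\ff}=\overline{\hh_i}$ for any index with $\overline{\hh_i}=\mu$ and $\alpha_i\neq0$, and we are done. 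Otherwise $\overline{\ff}<\mu$, forcing the cancellation $\sum_{\overline{\hh_i}=\mu}\alpha_i=0$; fixing one such index $i_0$ and applying Lemma~\ref{key-lemma} to each remaining pair $(\hh_i,\hh_{i_0})$ with $\overline{\hh_i}=\mu$ yields $\hh_i=\hh_{i_0}+g_i$ with $g_i\equiv 0\mod(\SS,\mu)$. Substituting back and using the cancellation of coefficients at $\hh_{i_0}$, we obtain a new presentation of $\ff$ as a linear combination of normal $\SS$-polynomials whose maximal leading monomial is strictly smaller than $\mu$, which closes the induction.

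For \ITEM2 $\Rightarrow$ \ITEM3, Lemma~\ref{f=Irr+n-s-polynomials} already shows that $\Irr(\SS)$ spans the quotient; linear independence follows at once, because any nontrivial relation $\sum\alpha_i\mu_i\in\Id\SS$ with $\mu_i\in\Irr(\SS)$ would, by~\ITEM2, have its leading monomial $\max_i\mu_i$ equal to $\overline{\hh}$ for a normal $\SS$-polynomial $\hh$, contradicting the definition of $\Irr(\SS)$. For \ITEM3 $\Rightarrow$ \ITEM1, I would treat each of the four kinds of compositions uniformly: each composition $\cc$ lies in $\Id\SS$ (as a multiple of, or difference of, elements already known to lie in $\Id\SS$), and by the definitions its leading monomial satisfies $\overline{\cc}\leq \bar h$ (multiplication type) or $\overline{\cc}<\mu$ (inclusion, multiplication, intersection types, where $\mu$ is the associated monomial). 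Applying Lemma~\ref{f=Irr+n-s-polynomials} write $\cc=\sum_i\alpha_i\mu_i+\sum_j\beta_j\hh_j'$ with all leading monomials $\leq\overline{\cc}$; since $\sum_i\alpha_i\mu_i=\cc-\sum_j\beta_j\hh_j'\in\Id\SS$ and $\Irr(\SS)$ is a basis of the quotient by~\ITEM3, every $\alpha_i=0$, so $\cc$ is trivial in the required sense.

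The main obstacle will be the implication \ITEM1 $\Rightarrow$ \ITEM2, specifically verifying that the ``pairwise cancellation'' step of the induction genuinely produces a new representation with strictly smaller $\mu$: Lemma~\ref{key-lemma} delivers exactly the needed congruence $\hh_i-\hh_{i_0}\equiv 0\mod(\SS,\mu)$ for any two normal $\SS$-polynomials sharing leading monomial $\mu$ (possibly coming from distinct elements of $\SS$, which is where the inclusion and intersection compositions are indispensable). Once this bookkeeping is correct, the remaining implications are routine applications of the previous lemmas together with the definition of $\Irr(\SS)$.
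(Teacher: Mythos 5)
Your proposal is correct and follows essentially the same route as the paper: \ITEM1 $\Rightarrow$ \ITEM2 via writing $\ff$ as a combination of normal $\SS$-polynomials (Lemma~\ref{Ids}\ITEM1) and inducting on the maximal leading monomial, using Lemma~\ref{key-lemma} to lower it when cancellation occurs, then \ITEM2 $\Rightarrow$ \ITEM3 and \ITEM3 $\Rightarrow$ \ITEM1 by the leading-monomial argument and Lemma~\ref{f=Irr+n-s-polynomials}, exactly as in the paper. Your \ITEM3 $\Rightarrow$ \ITEM1 simply spells out details the paper leaves terse, with the correct distinction between the $\leq$ bound for left/right multiplication compositions and the strict bound for the other types.
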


\begin{proof}
 \ITEM1 $\Rightarrow$ \ITEM2     For every  nonzero element~$\ff$ in~$\Id\SS$,  by Lemma~\ref{Ids}\ITEM1, we may assume that~$\ff=\sum_{i=1}^n\alpha_i\hh_{i}$,
  where each
$\alpha_i\in \kk, \alpha_i\neq 0$ and each $\hh_{i}$ is a normal $\SS$-polynomial.
Define~$
\mu_i= \overline{\hh_{i}}$ and assume that~$\mu_1=\mu_2= \cdots=\mu_{_l}>\mu_{l+1}\geq \cdots.
$
 Use induction on~$\mu_1$.
 If~$\mu_1=\ov{\ff}$, then there is nothing to prove.
 If~$\mu_1>\ov{\ff}$, then we have~$\sum_{i=1}^l\alpha_{i}=0$ and
$$
\ff
  =\sum_{ i=1}^l\alpha_i\hh_{1}
  -\sum_{ i=1}^l\alpha_i(\hh_{1}
  -   \hh_{i})+\sum_{ i=l+1}^n\alpha_i\hh_{i}
  =\sum_{j }\beta_{j}\hh_{j}'+\sum_{ i=l+1}^n\alpha_i\hh_{i},
$$
 where each $\ov{\hh_{j}'}<  \mu_1$ by Lemma \ref{key-lemma}.
 Claim \ITEM2 follows by induction hypothesis.

\ITEM2 $\Rightarrow$ \ITEM3  By Lemma \ref{f=Irr+n-s-polynomials},
$\Irr(\SS)$ generates
$\BC(\XXX|\SS)$ as a vector space. Suppose that~$\sum_{i}\alpha_i\mu_i=0$ in $\BC(\XXX|\SS)$,
where each $ \alpha_i\in \kk\sm \{0\}$ and each~$\mu_i\in  \Irr(\SS)$,  $\mu_1>\mu_2> \pdots$. Then we have~$0\neq \sum_{i}\alpha_i\mu_i\in \Id\SS$. Thus~$\overline{\sum_{i}\alpha_i\mu_i}=\mu_1\in \Irr(\SS)$,
 which contradicts with~\ITEM2.

\ITEM3 $ \Rightarrow$ \ITEM1  Every composition of~$\SS$
   is an element in $\Id\SS$. By Lemma~\ref{f=Irr+n-s-polynomials} and \ITEM3,
     we obtain  \ITEM1.
 \end{proof}

\subsection{Finite Gr\"obner-Shirshov bases}
Our aim in this subsection is to show that every finitely generated bicommutative algebra has a finite \gsb.

\begin{defi}
Let $\SS$ be a Gr\"{o}bner--Shirshov basis for $\BC(\XXX|\SS)$.  Then we call $\SS$ a \emph{minimal Gr\"{o}bner--Shirshov basis}, if for every $\ss\in \SS$,  we have~$\ov\ss\in \Irr(S\setminus \{s\})$.
\end{defi}

 The following lemma is a general fact for various kind of algebras, but we still offer a proof for the convenience of the readers.
\begin{lemm}\label{lIrr}
Let $I$ be an ideal of $\BC(\XXX)$ and $\SS$ a Gr\"{o}bner--Shirshov basis for $\BC(\XXX)/I$. For every~$T\subseteq S$,
if $\Irr(\TT)=\Irr(\SS)$ then $\TT$ is also a Gr\"{o}bner--Shirshov basis for $\BC(\XXX)/I$.
\end{lemm}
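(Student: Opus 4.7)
The plan is to use the Composition--Diamond lemma (Theorem~\ref{cd-lemma for BC}) in both directions: first extract that $\Irr(S)$ is a linear basis of $\BC(\XXX)/I$, then transport this information back to conclude things about $T$.

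First I would observe that since $T\subseteq S$, we have the trivial inclusion $\Id T\subseteq \Id S=I$, so there is a canonical surjection $\pi:\BC(\XXX)/\Id T\twoheadrightarrow \BC(\XXX)/I$. By Lemma~\ref{f=Irr+n-s-polynomials} applied to~$T$, the set $\Irr(T)$ generates $\BC(\XXX)/\Id T$ as a $\kk$-vector space. On the other hand, because $S$ is a Gr\"obner--Shirshov basis for $\BC(\XXX)/I$, the implication \ITEM2$\Rightarrow$\ITEM3 of Theorem~\ref{cd-lemma for BC} tells us that $\Irr(S)$ is a linear basis of $\BC(\XXX)/I$.

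Next I would use the hypothesis $\Irr(T)=\Irr(S)$ to show that $\Irr(T)$ is actually linearly independent in $\BC(\XXX)/\Id T$. Indeed, suppose $\sum_i \alpha_i\mu_i\in \Id T$ with $\mu_i\in\Irr(T)$ pairwise distinct. Then $\sum_i \alpha_i\mu_i\in I$ as well, and because the images of $\mu_i\in\Irr(T)=\Irr(S)$ form part of a $\kk$-basis of $\BC(\XXX)/I$, every $\alpha_i$ must vanish. Combined with the spanning property, this shows $\Irr(T)$ is a linear basis of $\BC(\XXX)/\Id T$. Consequently $\pi$ sends a basis bijectively onto a basis, hence is an isomorphism, which gives $\Id T=I$.

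Finally, since $\Irr(T)$ is a linear basis of $\BC(\XXX)/\Id T=\BC(\XXX|T)$, the implication \ITEM3$\Rightarrow$\ITEM2 of Theorem~\ref{cd-lemma for BC} yields that $T$ is a Gr\"obner--Shirshov basis for $\BC(\XXX)/\Id T=\BC(\XXX)/I$, which is what we wanted. The only mildly subtle point, and the step I would flag as the main obstacle, is the verification that $\Id T=I$: the inclusion $\Id T\subseteq I$ is automatic, but the reverse inclusion genuinely uses both halves of the Composition--Diamond lemma, namely the basis property of $\Irr(S)$ (to obtain linear independence of $\Irr(T)$ modulo~$\Id T$) together with the spanning property of~$\Irr(T)$ from Lemma~\ref{f=Irr+n-s-polynomials}.
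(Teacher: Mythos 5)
Your argument is correct, but it gets to the crucial equality $\Id{\TT}=I$ by a different mechanism than the paper. The paper's proof is a direct reduction argument: for a nonzero $\ff\in I$, since $\SS$ is a \gsb\ and $\Irr(\TT)=\Irr(\SS)$, the leading monomial $\ov{\ff}$ equals $\ov{\hh'}$ for some normal $\TT$-polynomial $\hh'$; subtracting $\mathsf{lc}(\ff)\hh'$ and inducting on the leading monomial (a well order) shows that every $\ff\in I$ is a linear combination of normal $\TT$-polynomials, which gives $I\subseteq\Id{\TT}$ and, at the same time, essentially exhibits the defining property of a \gsb\ for $\TT$. You instead run a purely linear-algebraic comparison: $\Irr(\TT)$ spans $\BC(\XXX)/\Id{\TT}$ by Lemma~\ref{f=Irr+n-s-polynomials}, it is linearly independent there because $\Id{\TT}\subseteq I$ and $\Irr(\TT)=\Irr(\SS)$ is a basis modulo $I$ (Theorem~\ref{cd-lemma for BC}, second implies third), so the canonical surjection $\BC(\XXX)/\Id{\TT}\to\BC(\XXX)/I$ carries a basis bijectively onto a basis and is therefore an isomorphism, forcing $\Id{\TT}=I$; then the third-implies-second direction of Theorem~\ref{cd-lemma for BC} finishes the proof. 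Both routes are valid; yours avoids any explicit induction but uses the full equivalence of the Composition--Diamond lemma in both directions, whereas the paper's rewriting argument is more constructive (it produces the expression of $\ff$ in normal $\TT$-polynomials) and only needs the \gsb\ property of $\SS$ together with the definition once $I=\Id{\TT}$ is in hand.
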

\begin{proof} For every~$\ff\in I$, since $\Irr(\TT)=\Irr(\SS)$ and $\SS$ is a Gr\"{o}bner--Shirshov basis for~$\BC(\XXX)/I$, we have $ \overline{\ff}=\overline{\hh}=\overline{\hh'}$ for some normal $S$-polynomial~$\hh$ and for some normal~$T$-polynomial~$\hh'$.  So we obtain~$\ff_1=\ff-\mathsf{lc}(\ff)\hh'\in I$ and $\overline{\ff_1}<\overline{\ff}$. By induction on~$\overline{\ff}$, we deduce that~$\ff$ can be written as a linear combination of normal $\TT$-polynomials, in particular,  we have $\ff\in \Id\TT$. This shows that $I= \Id\TT$. So the result follows from Theorem \ref{cd-lemma for BC}.
\end{proof}

 Let~$I$ be an ideal of~$\BC(\XXX)$.  Then the set~$\{f\in I \mid \mathsf{lc}(\ff)=1\}$ forms a Gr\"obner-Shirshov basis for~$\BC(\XXX)/I$, so Gr\"obner-Shirshov bases always exist.      Now we show that minimal Gr\"{o}bner--Shirshov bases always exist.
\begin{lemm}\label{lemma-minimal}
Let~$S$ be a Gr\"{o}bner--Shirshov basis for~$\BC(\XXX)/\Id{S}$. Then there exists a set~$T\subseteq S$ such that~$T$ is a minimal Gr\"{o}bner--Shirshov basis for the algebra~$\BC(\XXX)/\Id{S}$.
\end{lemm}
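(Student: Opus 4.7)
The plan is to construct $T$ as a system of representatives for the ``minimal'' leading monomials appearing in $S$, where minimality is taken with respect to the relation on $\NF(\XXX)$ characterizing leading monomials of normal polynomials. Concretely, introduce a relation $\preceq_L$ on $\NF(\XXX)$: declare $\mu \preceq_L \nu$ iff either $\mu = x \in \XXX$ and $\nu = x$, or $\mu = [u,v] \in \NF(\XXX) \setminus \XXX$ and $\nu = [u',v']$ with $u \mid u'$ and $v \mid v'$ in $[\XXX]^+$. By inspection of the definition of a normal $S$-polynomial, $\nu$ is the leading monomial of some normal $s$-polynomial precisely when $\overline s \preceq_L \nu$; consequently the condition $\overline s \in \Irr(S \setminus \{s\})$ is equivalent to: no $s' \in S \setminus \{s\}$ satisfies $\overline{s'} \preceq_L \overline s$. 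Moreover $\preceq_L$ is transitive, and for every $\nu \in \NF(\XXX)$ the set $\{\mu : \mu \preceq_L \nu\}$ is finite, so $\preceq_L$ satisfies the descending chain condition on every subset of $\NF(\XXX)$.

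Set $M = \{\overline s \mid s \in S\}$ and let $M_0 \subseteq M$ be its set of $\preceq_L$-minimal elements; by the DCC, every $m \in M$ lies above some $m_0 \in M_0$. Using the axiom of choice, pick for each $m \in M_0$ an element $s_m \in S$ with $\overline{s_m} = m$, and set $T = \{s_m \mid m \in M_0\}$. First, $T$ will be a Gr\"{o}bner--Shirshov basis: by Lemma~\ref{lIrr} it suffices to show $\Irr(T) = \Irr(S)$; one inclusion is automatic since $T \subseteq S$, and for the other, if $\mu \notin \Irr(S)$, then $\overline s \preceq_L \mu$ for some $s \in S$, and choosing $m \in M_0$ with $m \preceq_L \overline s$ gives $m \preceq_L \mu$ by transitivity, so $\mu$ equals the leading monomial of some normal $s_m$-polynomial, yielding $\mu \notin \Irr(T)$. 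Second, $T$ will be minimal: for distinct $t_1, t_2 \in T$ corresponding to distinct $m_1, m_2 \in M_0$, the relation $\overline{t_2} \preceq_L \overline{t_1}$ would force $m_2 \preceq_L m_1$ with $m_2 \ne m_1$, contradicting the $\preceq_L$-minimality of $m_1$ in $M$; hence $\overline{t_1} \in \Irr(T \setminus \{t_1\})$.

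The main point requiring care is the precise match between the minimality condition and the relation $\preceq_L$: when $\overline s \in \XXX$, the ``multiple'' relation $\mid$ defined earlier in the paper is strictly coarser than $\preceq_L$, so the construction must use $\preceq_L$ rather than $\mid$ to avoid removing elements whose leading monomials cannot be recovered as leading monomials of normal polynomials from the remaining generators.
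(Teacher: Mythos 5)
Your proposal is correct, and it takes a noticeably different route from the paper's. The paper first passes to a set of representatives $S'$ (one polynomial per leading monomial), then builds the minimal basis by a recursion along the well order on $\NF(\XXX)$, adding $\ff_\mu$ only when $\mu$ is still irreducible modulo the previously added elements, and finally proves $\Irr(\SS'')=\Irr(\SS')$ by an explicit rewriting argument that invokes Lemma~\ref{Ids}\ITEM2 (hence the fact that $S'$, being a \gsb, has trivial compositions). You instead isolate the order-theoretic content: the leading monomials of normal $s$-polynomials are exactly the $\preceq_L$-multiples of $\ov\ss$ (componentwise divisibility on $\NF(\XXX)\sm\XXX$, equality on $\XXX$), so minimality of a \gsb\ is purely a statement about the set $M=\ov\SS$ under $\preceq_L$; since principal down-sets of $\preceq_L$ are finite (and $\preceq_L$ is antisymmetric --- worth saying explicitly, since finiteness of down-sets alone does not give minimal elements for a mere preorder), every leading monomial sits above a $\preceq_L$-minimal one, and choosing one representative per minimal leading monomial gives $T$; transitivity yields $\Irr(\TT)=\Irr(\SS)$, Lemma~\ref{lIrr} gives the \gsb\ property, and minimality is immediate. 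What your approach buys is the elimination of both the transfinite recursion and the Lemma~\ref{Ids}\ITEM2 rewriting step; what it costs is that you must justify the characterization of leading monomials of normal $S$-polynomials (this follows from the monomial-order property together with the identities of Lemma~\ref{lemm-ass}, and is used implicitly elsewhere in the paper, e.g.\ in Theorem~\ref{finitegsb}), and you rightly flag that one must use $\preceq_L$ rather than the coarser ``multiple'' relation $\mu|\nu$ when $\ov\ss\in\XXX$, which is precisely the subtlety the paper's definition of normal $S$-polynomials creates.
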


\begin{proof}
Without loss of generality, assume that~$S$ is a nonempty set.
For each~$\mu\in \overline{\SS}:=\{\overline{s}\in \NF(\XXX) \mid s\in S\}$, we fix an arbitrary polynomial $\ff_{\mu}$ in $\SS$ such that $\overline{\ff_{\mu}}=\mu$. Define
$$
\SS'=\{\ff_{\mu}\in \SS\mid \mu\in \overline{\SS}\}.
$$
Then it is clear that $ \SS\supseteq \SS'$ and~$\Irr(\SS')=\Irr(\SS)$.
By Lemma \ref{lIrr}, $\SS'$ is a Gr\"{o}bner--Shirshov basis for~$\BC(\XXX)/\Id{S}$.
Define
$$\SS_1=\{\ss\in\SS'\mid \ell(\bar\ss)=1\},\ \SS_2=\{\ss\in\SS'\mid \ell(\bar\ss)>1\}.$$
Then we have $\SS'=\SS_1\cup\SS_2$.

Let $\mu_0=\minn\overline{\SS_2}$ be the minimal element in~$\overline{\SS_2}$. Define
$\SS_{\mu_0} =\{\ss \in \SS_2\mid \bar\ss=\mu_0\}$ and define~$\SS_{\mu} =\emptyset$ for every~$\mu<\mu_0$ in~$\NF(\XXX)\sm\XXX$.
Suppose that for every~$\nu<\mu$  in~$\NF(\XXX)\sm\XXX$, the set~$\SS_{\nu}$ has already been defined.   Then we define
\begin{displaymath}
\SS_{\mu}=
\begin{cases}
\cup_{\nu<\mu}\SS_{\nu}
 \ \ \ \ \  \ \ \ \ \ \ \ \ \text{ if $\mu\not\in \Irr(\cup_{\nu<\mu}\SS_{\nu})\cap \overline{S}$,} \\
\cup_{\nu<\mu}\SS_{\nu}\cup \{\ff_{\mu}\}  \ \ \ \  \text{ if $\mu\in \Irr(\cup_{\nu<\mu}\SS_{\nu})\cap \overline{S}$}.
\end{cases}
\end{displaymath}
Define
$$
\SS''=\SS_1\cup(\bigcup_{\mu \in \NF(\XXX)} \SS_{\mu}).
$$

We claim that~$\SS''$ is a minimal Gr\"obner-Shirshov basis for~$\BC(\XXX)/\Id\SS$.
By Lemma~\ref{lIrr}, it suffices to show $\Irr(\SS'')= \Irr(\SS')$. Since $\SS''\subseteq \SS'$,
 it suffices to show~$\Irr(\SS'')\subseteq \Irr(\SS')$, or equivalently, $\NF(\XXX)\sm \Irr(\SS')\subseteq \NF(\XXX)\sm \Irr(\SS'')$. Assume that~$\mu=\ov{\hh'}\in \NF(\XXX)\sm \Irr(\SS')$ and assume that~$\hh'$ is a normal $\SS'$-polynomial such that
  $$\hh'=\Rnormed{\zz_1\wdots\zz_p,
\Lnormed{\ss',\yy_1\wdots\yy_q
}
},$$
where~$\ss'\in\SS'$, $\zz_1\wdots\zz_p,\yy_1\wdots\yy_q\in\XXX$ and $p,q\in\mathbb{N}$.
 If~$\ss'\in\SS''$, then~$\mu\in\NF(\XXX)\sm \Irr(\SS'')$.
If~$\ss'\in S'\setminus S''$, then by the construction of~$S''$ we deduce~$\ell(\ov{\ss'})>1$ and $\overline{\ss'}=\ov{\hh''}$ for some  normal~$\SS''$-polynomial $\hh''$. So we may assume
 $$
\hh''=\Rnormed{\aa_1\wdots\aa_n,
\Lnormed{\ss'',\bb_1\wdots\bb_m
}
},$$
where $\aa_1\wdots\aa_n,\bb_1\wdots\bb_m\in\XXX$ and $m,n\in\mathbb{N}, m+n\in\mathbb{N}^+$, $\ss''\in S''$ and~$\ell(\overline{s''})\geq 2$.
Set
$$\ff=\Rnormed{\zz_1\wdots\zz_p,
\Lnormed{\hh'',\yy_1\wdots\yy_q
}
}.$$

 If $\mm\in\mathbb{N}^+$, then $\ff=
\Rnormed{\zz_1\wdots\zz_p,\aa_1\wdots\aa_n,
\Lnormed{\ss'',\bb_1\wdots\bb_m,\yy_1\wdots\yy_q
}
}$ is a normal $\SS''$-polynomial.
Therefore we obtain $\mu=\bar\ff\in\NF(\XXX)\sm \Irr(\SS'')$.
If $\mm=0$, then  since $\ss''\in\SS''\subseteq\SS'$, $\SS'$ is a \gsb\ in $\BC(\XXX)$ and Lemma \ref{Ids}\ITEM2 holds,
 we have
\begin{align*}
  \ff&=
\Rnormed{\zz_1\wdots\zz_p,\aa_1\wdots\aa_{n-1},
\Lnormed{\aa_n,\ss'',\yy_1\wdots\yy_q
}
}&\\
&=\Rnormed{\zz_1\wdots\zz_p,\aa_1\wdots\aa_n,
\Lnormed{\ss'',\yy_1\wdots\yy_q
}
}
+\sum_j \alpha_j\hh_j,&
\end{align*}
where each $\alpha_j\in\kk$, each $\hh_j$ is a normal $\SS'$-polynomial and~$\ov{\hh_j}<\bar\ff$.
Therefore we have  $$\mu=\bar\ff=\overline{\Rnormed{\zz_1\wdots\zz_p,\aa_1\wdots\aa_n,
\Lnormed{\ss'',\yy_1\wdots\yy_q
}
}}
\in\NF(\XXX)\sm \Irr(\SS'').$$

By Lemma \ref{lIrr}, $\SS''$ is a  Gr\"{o}bner--Shirshov basis for $\BC(\XXX)/\Id{S}$.
Moreover, by the construction of~$S''$, we obtain that~$\SS''$ is a minimal Gr\"{o}bner--Shirshov basis for $\BC(\XXX)/\Id{S}$.
\end{proof}

As a corollary of the Hilbert's Theorem, see for example~\cite[Sect.1.6]{gsbook1}, every finitely generated commutative algebra has a finite Gr\"obner  basis.

\begin{lemm}\label{divide}
Let $U$ be an infinite subset of $[\XXX]^+$, where $\XXX$ is a finite set. Then there exists  an infinite subset~$\{u_i\in U\mid i\in \mathbb{N}^+\}$ of $U$
such that $u_{i+1}$ is a multiple of $u_i$ for every~$i\in \mathbb{N}^+$.
\end{lemm}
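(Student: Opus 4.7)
The plan is to reduce this to the classical Dickson's lemma via the bijection between commutative monomials and exponent vectors. Since $\XXX = \{\xx_1,\ldots,\xx_r\}$ is finite, every monomial $\uu = \xx_1^{\nn_1}\cdots \xx_r^{\nn_r}\in [\XXX]^+$ corresponds to its exponent vector $(\nn_1,\ldots,\nn_r)\in \mathbb{N}^r\setminus\{(0,\ldots,0)\}$, and under this identification the divisibility relation $v\mid u$ coincides with componentwise $\leq$. Hence it suffices to establish: every infinite subset $V\subseteq \mathbb{N}^r$ contains an infinite sequence $v_1, v_2,\ldots$ of pairwise distinct elements with $v_i\leq v_{i+1}$ componentwise for all $i$.

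I would prove this claim by induction on $r$. For the base case $r=1$, any infinite subset of $\mathbb{N}$ is unbounded, so well-ordering lets us enumerate it in strictly increasing order, producing the required chain. For the inductive step, assume the result holds in $r-1$ coordinates and let $V\subseteq\mathbb{N}^r$ be infinite. I would split into two cases based on the projection $\pi\colon V\to \mathbb{N}$ onto the last coordinate. If some fibre $\pi^{-1}(k)$ is infinite, I would project it off the last coordinate to obtain an infinite subset of $\mathbb{N}^{r-1}$, apply the induction hypothesis to extract a chain there, and then re-attach the common last coordinate $k$; this yields a chain in $V$. Otherwise every fibre of $\pi$ is finite, so $\pi(V)$ is infinite and, by the base case, I can select a subfamily $W\subseteq V$ whose last coordinates are strictly increasing; applying the induction hypothesis to the first $r-1$ coordinates of $W$ I extract an infinite subchain, and the last coordinate remains strictly increasing on this subchain, producing a componentwise chain in $V$.

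Translating back via the exponent-vector correspondence, I obtain an infinite sequence $\uu_1,\uu_2,\ldots$ of distinct elements of $U$ with $\uu_i\mid \uu_{i+1}$ for every $i\in \mathbb{N}^+$, as required.

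I do not anticipate a genuine obstacle here: the only mild subtlety is bookkeeping to ensure the selected $\uu_i$ are pairwise distinct, which is automatic because $U$ is a \emph{set} (so any enumeration of the chain already consists of distinct monomials), and the divisibility condition $\uu_i\mid \uu_{i+1}$ does not require strict inequality. The argument is essentially Dickson's lemma, reformulated in the monomial language used throughout the paper.
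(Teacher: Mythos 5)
Your route is genuinely different from the paper's: the paper deduces the lemma from Hilbert's basis theorem (choose a finite subset of $U$ generating the monomial ideal, use pigeonhole to find $u_1\in U$ with infinitely many multiples in $U$, and iterate inside the set of multiples), whereas you argue directly via Dickson's lemma on exponent vectors. The reduction to $\mathbb{N}^r$ is fine and the target statement (every infinite subset of $\mathbb{N}^r$ contains an infinite componentwise chain) is true, but your inductive step has a gap in the second case. After choosing $W=\{w_1,w_2,\dots\}\subseteq V$ with strictly increasing last coordinates, you apply the induction hypothesis to ``the first $r-1$ coordinates of $W$''. As you stated it, the hypothesis applies to an infinite \emph{subset} of $\mathbb{N}^{r-1}$, so (i) it does not apply at all if the elements of $W$ have only finitely many distinct projections (that subcase is easy, but it is not covered), and (ii) when the projection set is infinite, the chain $q_1\le q_2\le\cdots$ it produces is only a chain of projection \emph{values}: the elements of $W$ realizing $q_1,q_2,\dots$ need not occur in $W$ in that order, so their last coordinates need not be increasing. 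The assertion ``the last coordinate remains strictly increasing on this subchain'' is therefore unjustified as written; the set-form induction hypothesis is too weak to support it.

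The gap is repairable in either of two standard ways. Cleanest: run the induction on the subsequence statement --- every infinite sequence in $\mathbb{N}^r$ admits an infinite componentwise nondecreasing subsequence --- first thinning to a subsequence with nondecreasing last coordinate, then applying the inductive hypothesis to the first $r-1$ coordinates of that subsequence; since a subsequence extraction respects the index order, the monotonicity already arranged in the last coordinate survives, and applying the result to any enumeration of $U$ gives the lemma (distinctness is automatic). Alternatively, keep your set formulation and add a greedy selection: the chain values $q_j$ are pairwise distinct, distinct elements of $W$ have distinct last coordinates, so the realizers of any tail $\{q_j\mid j>k\}$ have unbounded last coordinates, and one can choose realizers of a subchain of $(q_j)$ with strictly increasing last coordinates. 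With either repair your argument is complete, self-contained, and more elementary than the paper's Hilbert-basis/Gr\"obner argument, which in exchange reuses machinery already present in the paper.
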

\begin{proof}
Let~$\Id{U}$ be the ideal of~$k[X]$ generated by~$U$ and let~$k[X|U]$ be the quotient algebra~$k[X]/\Id{U}$.
By Hilbert's Theorem, there exists a finite set~$U_1\subset U$ that generates~$\Id{U}$, namely, we have~$\Id{U_1}=\Id{U}$.
Moreover, it is easy to see that~$U_1$ is a \gb\ for~$k[X]/\Id{U}$.
Since~$\UUU$ is infinite, there exists a monomial~$u_1\in U_1$ such that infinite many multiples of~$u_1$ lies in~$U$.
Define~$V_2=\{u\in U\mid u_1| u, u\neq u_1\}$ and let~$U_2$ be a finite subset of~$V_2$ such that~$V_2$ is a Gr\"obner basis for~$k[X|V_2]$.
Then we can choose an element~$u_2\in U_2$ such that there are infinitely many multiples of~$u_2$ in~$V_2$.
Continue this process, we shall obtain the desired infinite set.
\end{proof}

\begin{theorem}\label{finitegsb}
Let $\XXX$ be a finite set and let $I$
 be an ideal of $\BC(\XXX)$.
Then there exists a  finite Gr\"{o}bner--Shirshov basis for~$\BC(\XXX)/I$.
\end{theorem}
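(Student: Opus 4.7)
The plan is to start from a (possibly enormous) Gr\"obner--Shirshov basis for $\BC(X)/I$ and then cut it down to a minimal one, showing that minimality together with the finiteness of $X$ forces finiteness. First I would observe that the set $S = \{f \in I \mid \mathsf{lc}(f) = 1\}$ is automatically a GSB for $\BC(X)/I$ (its ideal equals $I$ and every nonzero element of $I$ has leading monomial matching that of some element of $S$, taken to be $f$ itself times a scalar). Applying Lemma~\ref{lemma-minimal} to this $S$ yields a minimal GSB $T \subseteq S$ for $\BC(X)/I$, and the remaining task is to prove that $T$ must be finite.

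The next step is the key structural observation: in any minimal GSB $T$, the leading monomials $\overline{T} = \{\bar t \mid t \in T\}$ are pairwise distinct and form an \emph{antichain} under the divisibility relation on $\NF(X)$. Indeed, if $\bar t \mid \bar{t'}$ with $t \neq t'$, then $\bar{t'}$ is the leading monomial of the normal $(T \setminus \{t'\})$-polynomial obtained by multiplying $t$ appropriately, contradicting $\bar{t'} \in \Irr(T \setminus \{t'\})$. Thus the theorem reduces to showing that every antichain in $\NF(X)$ is finite when $X$ is finite.

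To see this, split the antichain $\overline{T}$ into its intersection with $X$ and with $\NF(X)\setminus X$. The first piece is finite because $X$ is finite. Suppose, for contradiction, that the second piece is infinite, and enumerate it as $\{[u_i; v_i] \mid i \in \mathbb{N}^+\}$ with $u_i, v_i \in [X]^+$. The goal is to extract a divisibility chain $[u_{j_1}; v_{j_1}] \mid [u_{j_2}; v_{j_2}] \mid \cdots$, which will contradict the antichain property because divisibility in $\NF(X) \setminus X$ means $u_{j_k} \mid u_{j_{k+1}}$ and $v_{j_k} \mid v_{j_{k+1}}$ simultaneously. I would handle the first coordinate via pigeonhole plus Lemma~\ref{divide}: either only finitely many distinct $u_i$ appear (in which case infinitely many indices share a common $u$), or infinitely many distinct $u_i$ appear (in which case Lemma~\ref{divide} produces a subsequence with $u_{i_1} \mid u_{i_2} \mid \cdots$). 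Either way we obtain an infinite subsequence on which the $u$-coordinates form a chain, and repeating the same pigeonhole/Lemma~\ref{divide} argument on the corresponding $v$-coordinates yields a further subsequence with the $v$-coordinates forming a chain, finishing the contradiction.

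The main obstacle is the combinatorial step of extracting a chain from the product divisibility order on $[X]^+ \times [X]^+$ (which is essentially Dickson's lemma in $\mathbb{N}^{2\#X}$); once that is phrased as two successive applications of Lemma~\ref{divide} together with a pigeonhole to deal with value-repetitions among the first coordinates, the rest is formal. The existence and minimality statements are already handled by Lemmas~\ref{lemma-minimal} and~\ref{lIrr}, so no further work with compositions is required.
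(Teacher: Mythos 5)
Your proposal takes essentially the same route as the paper's proof: reduce to a minimal \gsb\ via Lemma~\ref{lemma-minimal}, use minimality to exclude divisibilities among leading monomials, set aside the finitely many elements whose leading monomial is a letter, and rule out an infinite family of leading monomials $[u;v]$ by applying Lemma~\ref{divide} to each coordinate together with a pigeonhole argument (in effect Dickson's lemma for $[\XXX]^+\times[\XXX]^+$). Two points need repair, though neither changes the architecture. First, the claim that the leading monomials of a minimal \gsb~$T$ form an antichain under the divisibility relation on $\NF(\XXX)$ is false as stated: when $\ell(\bar t)=1$ the definition of normal $T$-polynomial allows no multiplications of $t$, so ``multiplying $t$ appropriately'' is unavailable, and a minimal \gsb\ can contain $t$ with $\bar t=x$ together with $t'$ with $x\in\cont(\bar{t'})$ --- for instance $\{x,[x;x],[x;y],[y;x]\}$ is a minimal \gsb\ for $\BC(\{x,y\})/\Id{x}$; this is exactly the defect discussed in the paper's remark at the end of Section~3. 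Your argument survives because you only use incomparability inside $\overline{T}\cap(\NF(\XXX)\sm\XXX)$, where both leading monomials have length at least two and your justification is correct, while the letter part of $\overline{T}$ is finite simply because leading monomials of a minimal basis are pairwise distinct and $\XXX$ is finite; but the antichain statement should be asserted only for the part in $\NF(\XXX)\sm\XXX$.

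Second, ``repeating the pigeonhole/Lemma~\ref{divide} argument on the $v$-coordinates'' hides a direction-alignment issue: Lemma~\ref{divide} produces a divisibility chain among the $v$-values as a set, and the pair carrying the larger $v$ may carry the smaller $u$, in which case the two monomials $[u;v]$ are not comparable. The fix is short and is what the paper does: two distinct $v$'s in the extracted $v$-chain cannot share the same $u$ (they would give comparable monomials with equal first coordinate), so the matching $v\mapsto u$ is injective; hence, fixing the pair $[u_p;v_1]$ attached to the smallest $v$, some later $v_m$ in the chain is matched to a $u_n$ lying beyond $u_p$ in the $u$-chain, and then $[u_p;v_1]\,|\,[u_n;v_m]$ contradicts minimality. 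With these two adjustments your argument coincides with the paper's proof of Theorem~\ref{finitegsb}.
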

\begin{proof}
  By Lemma~\ref{lemma-minimal}, we may assume that~$\SS$ is a minimal \gsb\ for $\BC(\XXX)/I$.   We shall prove that~$S$ is finite. Suppose for the contrary  that~$S$ is an infinite set.

 Define~$\SS_1=\{\ss\in\SS\mid \ell(\bar\ss)=1\}, \ {\SS_2=\{\ss\in\SS\mid \ell(\bar\ss)>1\}}$.
 Then  we have
 $\SS=\SS_1\cup\SS_2$.
 Since $\SS$ is a minimal \gsb\ for $\BC(\XXX)/I$ and~$X$ is a finite set, we obtain that~$\SS_1$ is a finite set while~$\SS_2$ is infinite.
 Define
 $$\LLL=\{\uu\in[\XXX]^+\mid \mbox{ there exists } \vv\in[\XXX]^+\mbox{ such that }[\uu;\vv]\in\overline{\SS_2}\}$$
 and
 $$\RRR=\{\vv\in[\XXX]^+\mid \mbox{ there exists } \uu\in[\XXX]^+\mbox{ such that }[\uu;\vv]\in\overline{\SS_2}\}.$$

 Without loss of generality, we assume that $\LLL$ is an infinite set. By Corollary \ref{divide}, there exists an infinite set
 $$\LLL_1=\{u_i\in L\mid i\in \mathbb{N}^+\}$$
such that $u_{i+1}$ is a multiple of $u_i$ for every~$i\in \mathbb{N}^+$.
Define $$\SS'=\{[\uu;\vv]\mid \uu\in \LLL_1, \vv\in[\XXX]^+, [\uu;\vv]\in\overline{\SS}\}.$$
Obviously, the set $\SS'$ is infinite.
Define
$$\RRR_1=\{\vv\in[\XXX]^+\mid \mbox{ there exists } \uu\in[\XXX]^+\mbox{ such that } [\uu;\vv]\in\SS'\}.$$

   If $\RRR_1$ is finite, then for some monomial~$\vv\in\RRR_1$, there exist some monmomials~$u_i,u_j\in L_1$  such that~$[\uu_i;\vv], [\uu_j;\vv]\in\SS'\subseteq\overline{\SS}$ and~$u_i|u_j$.
 Thus~$ [\uu_j;\vv]$ is a multiple of~$[\uu_i;\vv]$, which contradicts to the assumption that~$\SS$ is a minimal \gsb\ for $\BC(\XXX)/I$.

If~$\RRR_1$ is infinite, then by Corollary \ref{divide}  there exists an infinite  set $\RRR_2=\{\vv_i\in R_1\mid i\in \mathbb{N}^+\}$ such that $\vv_{i+1}$ is a multiple of $\vv_i$ in $[\XXX]^+$  for
 every~$i\in \mathbb{N}^+$.
 We claim that for every monomial~$u_i$ in~$L_1$, there exists at most one monomial~$v_{j_i}\in R_2$ such that~$[u_i; v_{j_i}]\in S'$. For otherwise, say~$[u_i;v_p], [u_i;v_q]\in S'\subseteq \bar\SS$ and~$p<q$, then we have~$[u_i;v_p] | [u_i;v_q]$, which contradicts to the assumption that~$S$ is a minimal \gsb. Similarly, for every monomial~$v_i$ in~$R_2$, there exists exactly one monomial~$u_{j_i}\in L_1$ such that~$[u_{j_i};v_i]\in S'$. Assume~$[u_p;v_1]\in\SS'\subseteq\overline{\SS}$. Since~$\{u_1\wdots u_p\}$ is a finite set, by the above reasoning, for some large enough integer~$\mm>1$, there exists some element~$[u_n;v_m]\in S'\subseteq \bar\SS$  such that $n>p$.
 Therefore~$[\uu_{\nn};\vv_{\mm}]$ is a multiple of $[\uu_{p};\vv_1]$, which  contradicts to the assumption that $\SS$ is a minimal \gsb\ for~$\BC(\XXX)/I$.
\end{proof}
As a corollary, we obtain an alternative proof of the following results.
\begin{coro}\cite{bicom-variety}
Finitely generated
bicommutative algebras are weakly noetherian, i.e., satisfy the ascending chain
condition for two-sided ideals.
\end{coro}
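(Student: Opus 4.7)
The plan is to reduce the ascending chain condition to the finite Gröbner–Shirshov basis result (Theorem~\ref{finitegsb}), via a standard argument that mirrors the classical proof for commutative Noetherian rings.

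First, I would fix a presentation. Since $\mathcal{B}$ is finitely generated, we may write $\mathcal{B} = \BC(X)/J$ for some finite set $X$ and some ideal $J$ of $\BC(X)$. Two-sided ideals of $\mathcal{B}$ correspond bijectively to ideals of $\BC(X)$ containing $J$, so an ascending chain $\bar{I}_1 \subseteq \bar{I}_2 \subseteq \cdots$ of ideals in $\mathcal{B}$ lifts uniquely to an ascending chain
$$J \subseteq I_1 \subseteq I_2 \subseteq \cdots$$
of ideals of $\BC(X)$, and stabilization of the original chain is equivalent to stabilization of the lifted chain. It therefore suffices to prove the ascending chain condition directly in $\BC(X)$ when $X$ is finite.

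Next, set $I = \bigcup_{n\geq 1} I_n$, which is an ideal of $\BC(X)$ since the $I_n$ form an ascending chain. By Theorem~\ref{finitegsb}, the quotient $\BC(X)/I$ admits a finite Gröbner–Shirshov basis $S$. In particular, $\Id{S} = I$, so the finitely many elements of $S$ generate $I$ as an ideal.

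Finally, because $S$ is finite and each $s \in S$ lies in $I = \bigcup_n I_n$, there exists an integer $N$ with $S \subseteq I_N$. Then $I = \Id{S} \subseteq I_N \subseteq I$, which forces $I_N = I_{N+1} = \cdots = I$, and the chain stabilizes. This gives the ascending chain condition on ideals of $\BC(X)$, hence on ideals of $\mathcal{B}$. The only non-routine ingredient is Theorem~\ref{finitegsb} itself; the rest is the standard Hilbert-style deduction of the ACC from finite generation of ideals.
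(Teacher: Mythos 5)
Your proof is correct and follows exactly the route the paper intends: the corollary is stated as an immediate consequence of Theorem~\ref{finitegsb} (every ideal of $\BC(\XXX)$ with $\XXX$ finite has a finite Gr\"obner--Shirshov basis, hence is finitely generated), and you have simply written out the standard lifting-of-chains and union-plus-stabilization argument that the paper leaves implicit.
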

\begin{coro} The word problem of an arbitrary finitely presented bicommutative algebra is solvable. \end{coro}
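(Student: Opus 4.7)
Let $\mB=\BC(\XXX|\SS)$ with both $\XXX$ and $\SS$ finite. The word problem reduces to deciding, for any $\ff\in\BC(\XXX)$, whether $\ff\in \Id\SS$; equivalently, $\ff=\gg$ in $\mB$ iff $\ff-\gg\in\Id\SS$. The plan is to produce a finite \gsb\ $T$ for $\mB$ by an effective Buchberger--Shirshov completion, and then decide membership in $\Id\SS$ by reducing to a normal form with respect to $T$.

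First, starting from $T_0:=\SS$, at stage $i$ I would enumerate the finitely many compositions of $T_i$: for each pair $\ff,\gg\in T_i$ only finitely many left and right multiplication, inclusion, multiplication, and intersection compositions arise, since the $\lcm$ of two leading monomials in $\NF(\XXX)\sm\XXX$ is uniquely determined and the auxiliary letters range over the finite set~$\XXX$. For each composition $\hh$ I would iterate Lemma~\ref{f=Irr+n-s-polynomials} to rewrite $\hh$ as a $\kk$-linear combination of elements of $\Irr(T_i)$ plus a linear combination of normal $T_i$-polynomials whose leading monomials are strictly below $\ov\hh$; this rewriting is effective because $<$ is a well order and $T_i$ is finite. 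If the $\Irr(T_i)$-part of every reduced composition vanishes, stop; otherwise, set $T_{i+1}$ to be $T_i$ together with the monic scalings of the nonzero reduced parts.

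Next I would prove termination. If the procedure never halted, the union $T_\infty:=\bigcup_i T_i$ would be an infinite monic set with $\Id{T_\infty}=\Id\SS$ in which every newly added element $\hh$ satisfies $\ov\hh\in\Irr(T_i)$ at the moment of its addition. Thinning $T_\infty$ via the construction in Lemma~\ref{lemma-minimal} would then yield an infinite minimal \gsb\ for $\BC(\XXX)/\Id\SS$, contradicting Theorem~\ref{finitegsb}. Thus the procedure halts at some finite stage with a finite set~$T$ all of whose compositions are trivial modulo $T$, and Theorem~\ref{cd-lemma for BC} guarantees that $T$ is a finite \gsb\ for $\mB$.

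Finally, given any $\hh\in\BC(\XXX)$, iterating Lemma~\ref{f=Irr+n-s-polynomials} computes a $\kk$-linear combination of elements of $\Irr(T)$ representing $\hh$ modulo $\Id\SS$; by Theorem~\ref{cd-lemma for BC}\ITEM3 this representation is unique, so denote it $\NF_T(\hh)$. Then $\ff=\gg$ in $\mB$ iff $\NF_T(\ff-\gg)=0$, which is decidable by inspection of finitely many coefficients. The main obstacle is the termination of the completion procedure, which is where Theorem~\ref{finitegsb}---resting ultimately on Hilbert's basis theorem via Lemma~\ref{divide}---does the heavy lifting.
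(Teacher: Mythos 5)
Your proposal is correct in substance, and it supplies exactly the argument that the paper leaves implicit: the corollary is stated without proof as a consequence of Theorem~\ref{finitegsb}, and the intended reasoning is precisely a terminating completion procedure followed by comparison of normal forms, which is what you spell out. The enumeration of compositions at each stage is indeed finite, the reductions via Lemma~\ref{f=Irr+n-s-polynomials} are effective because reducibility of a monomial by a finite set amounts to finitely many divisibility checks in $[\XXX]^+$ (plus the letter case), and uniqueness of the normal form follows from Theorem~\ref{cd-lemma for BC}\ITEM3, so the final decision step is sound.

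The only step you state too quickly is termination. From an infinite union $T_\infty$ one cannot in general conclude that the minimal \gsb\ extracted by Lemma~\ref{lemma-minimal} is infinite: an infinite \gsb\ may thin down to a finite minimal one (already $\{x^2,x^3,x^4,\dots\}$ does so in the commutative setting). What rescues the claim is the property you record but do not actually use: each newly added polynomial $h$ satisfies $\ov h\in\Irr(T_i)$ at the moment of addition. If the thinned minimal basis $T''$ were finite, it would be contained in some finite stage $T_N$; then for any $h_j$ added after stage $N$ one would have $\ov{h_j}\in\Irr(T_{j-1})\subseteq\Irr(T'')=\Irr(T_\infty)$ (the last equality is established in the proof of Lemma~\ref{lemma-minimal}), while $h_j\in T_\infty$ forces $\ov{h_j}\notin\Irr(T_\infty)$, a contradiction. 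Alternatively, and more directly, the sequence of newly added leading monomials contains no pair with $\mu_i\,|\,\mu_j$ for $i<j$, and the Dickson-type argument in the proof of Theorem~\ref{finitegsb}, via Lemma~\ref{divide}, excludes such an infinite sequence. Note also that the contradiction you invoke is really with the fact proved inside Theorem~\ref{finitegsb} --- every minimal \gsb\ over a finite $\XXX$ is finite --- rather than with its literal statement, which only asserts the existence of some finite \gsb. With these two clarifications your proof is complete.
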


We conclude this section with a few comments on the Gr\"obner-Shirshov bases theory we established for bicommutative algebras. In Definition~\ref{defi-gsb}\ITEM1, when~$\bar{f}$ is a letter, certainly we can define~$\Rnormed{\zz_1\wdots\zz_{m},\Lnormed{\ff,\yy_1\wdots\yy_{n}
}}$ and $\Rnormed{\zz_1\wdots\zz_{m-1},\Lnormed{\zz_{m},\ff,\yy_1\wdots\yy_{n}
}}$ to be normal~$S$-polynomials, where~$z_1\wdots y_n\in X$. But then many of the proofs in the article will become much more lengthy with boring discussions due to this alternative way of defining normal~$S$-polynomials.  And in our way, there is also some disadvantage. For example, let~$S=\{f\}$ and~$\bar\ff=x\in X$. Then one naturally expects~$S$ to be a \gsb. However, in our way, this is not the case. We have to add all the relations of the form~$[f;y]$ and $[y;f]$ for every~$y\in X$ to~$S$ in order to form a \gsb. We accept this defect because adding these polynomials just means that every time we can rewrite~$\bar\ff$ into $\bar\ff-f$ in its appearance in $\Rnormed{\zz_1\wdots\zz_{m},\Lnormed{\ff,\yy_1\wdots\yy_{n}
}}$ and $\Rnormed{\zz_1\wdots\zz_{m-1},\Lnormed{\zz_{m},\ff,\yy_1\wdots\yy_{n}
}}$, which is in fact the same as the ordinary way of dealing with \gsb.

\section{\GK\ of bicommutative algebras}\label{sec-main-result}
Our aim in this section is to prove the main result of this article, namely, the Gelfand-Kirillov dimension of an arbitrary finitely generated bicommutative algebra is a nonnegative integer. The method we applied here is the Gr\"obner-Shirshov bases theory for bicommutative algebras, which  is established in Section \ref{gsbforbi}.
Moreover, we show that, in order to calculate the Gelfand-Kirillov dimension of a finitely generated bicommutative algebra, it suffices to investigate some property  of a finite set instead of constructing a possibly infinite linear basis for the bicommutative algebra under consideration.

We begin with some notation. Let~$\VVV, \VVV_1$ and~$\VVV_2$ be vector subspaces of a bicommutative algebra~$\MB$. We define
$$(\VVV_1\VVV_2)=\Span_k\{(\xx\yy)\mid \xx\in \VVV_1, \yy\in \VVV_2\}.$$
Then we define~$\VVV^1=\VVV$ and~$\VVV^{n}=\sum_{1\leq i\leq n-1}(\VVV^i\VVV^{n-i})$ for every~$\nn\geq 2$. Finally, we define
$$
\VVV^{\leq n}=\VVV^{1}+\VVV^{2}+\pdots+\VVV^{n}.
$$

Let~$\MB$ be a bicommutative algebra. Then   the \GK\ of~$\MB$ is defined to be
$$ \gkd\MB=\sup\limits_{\VVV}\overline{\lim\limits_{\nn \to \infty}}\log_{\nn}\mathsf{dim}(\VVV^{\leq\nn}),$$
where the supremum is taken over all finite dimensional subspaces of~$\MB$.
 It follows immediately that, for a bicommutative algebra~$\MB=\BC(\XXX|\SS)$, where $\XXX$ is a finite set and $\SS$ is a \gsb\ for $\BC(\XXX|\SS)$, we have
$$\gkd\MB=\overline{{\lim\limits_{\nn\to \infty}}}\log_{\nn}
\#  \{\mu\in\Irr(\SS)\mid \ell(\mu)\leq\nn\}.$$

Let~$\XXX=\{\xx_1,\dots,\xx_\rr\}$ be a nonempty finite set and let~$\SS$ be a finite \gsb\ for~$\BC(\XXX|\SS)$.
Define
\begin{eqnarray*}
\NNN_{\SS}=
\begin{cases}
1,& \mbox{if } \SS=\emptyset\\
\minn\{\nn\mid \bar\ss | [\xx_1^n\cdots\xx_{\rr}^n;\xx_1^n\cdots\xx_{\rr}^n], \ss\in \SS\},& \mbox{if } \SS\neq\emptyset.
\end{cases}
\end{eqnarray*}
For every $\xx\in\XXX$, we define
$$\Sd(\xx)=0,$$
 and  for every monomial $\mu=[\xx_1^{n_1}\cdots\xx_{\rr}^{n_{\rr}};\xx_1^{m_1}\cdots\xx_{\rr}^{m_{\rr}}]
 \in\NF(\XXX)\sm \XXX$,
we define
$$
\Sd(\mu)=
\#\{i\mid \nn_{i}>\NNN_{\SS},i\in\{1\wdots r\}\}
+\#\{i\mid \mm_{i}>\NNN_{\SS},i\in\{1\wdots r\}\}.$$
Finally, for an arbitrary nonempty set~$B\subseteq \Irr(\SS)$, we define
$$\Sd(B)=\max\{\Sd(\mu)\mid \mu\in B\}$$
and define
$$\Sd(\emptyset)=0.$$
We shall soon see that~$\Sd(B)$ is always finite if~$X$ is a finite set.
Now we investigate the \GK\  of a finitely generated bicommutative algebra.
 \begin{lemm}\label{lemma-GKBC}
Let  $\XXX$ be a nonempty finite set and let~$\SS$ be a finite \gsb\ for $\BC(\XXX|S)$. Then we have
$\gkd{\BC(\XXX|S)}=\Sd(\Irr(\SS))$.
\end{lemm}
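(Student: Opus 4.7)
The plan is to mirror the proof of Lemma~\ref{lemma-ca}, using the same three-step structure: first dispose of the case $k=0$, then bound the length-$\le n$ monomials in $\Irr(S)$ from above by $O(n^k)$, then exhibit $\Theta(n^k)$ such monomials from below. Throughout, write $k=\Sd(\Irr(S))$; by Theorem~\ref{cd-lemma for BC}, $\Irr(S)$ is a linear basis of $\BC(X|S)$, so it suffices to analyse the asymptotics of $\#\{\mu\in\Irr(S)\mid\ell(\mu)\le n\}$.

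If $k=0$, every $\mu=[u;v]\in\Irr(S)\sm X$ has all $2r$ exponents bounded by $N_S$, which together with the finite set $X$ makes $\Irr(S)$ finite and forces $\gkd{\BC(X|S)}=0$. If $k\ge 1$, for the upper bound each $\mu=[u;v]\in\Irr(S)$ has at most $k$ of its $2r$ exponents exceeding $N_S$: choosing which $\le k$ positions are ``heavy'' costs $O(1)$, the ``light'' positions take values in $\{0,\dots,N_S\}$ contributing a bounded factor, and the heavy exponents sum to $\le n$ yielding $O(n^k)$ possibilities. Hence $\gkd{\BC(X|S)}\le k$.

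For the lower bound (with $k\ge 1$), I would pick a witness $\mu_0=[u_0;v_0]\in\Irr(S)$ with $\Sd(\mu_0)=k$, and after reindexing say $u_0=x_1^{n_1}\cdots x_r^{n_r}$ with $n_1,\dots,n_s>N_S$ while $v_0$ has $m_{j_1},\dots,m_{j_t}>N_S$ for some $\{j_1,\dots,j_t\}\subseteq\{1,\dots,r\}$, where $s+t=k$. Since $\Irr(S)$ is closed under bicommutative divisibility (immediate from the definitions and the fact that $\overline{s}\mid\mu\mid\nu$ implies $\overline{s}\mid\nu$), the sub-monomial
\[
\nu_p:=\bigl[x_1^{p}\cdots x_s^{p};\; x_{j_1}^{p}\cdots x_{j_t}^{p}\bigr]
\]
lies in $\Irr(S)$ for $p=N_S+1$, inherited from $\mu_0$. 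I then run the same minimal-counterexample argument as in the commutative case: if $p_0>N_S+1$ were minimal with $\nu_{p_0}\notin\Irr(S)$, some $\overline{s}$ would divide $\nu_{p_0}$ but not $\nu_{p_0-1}$, which forces a coordinate of $\overline{s}$ to equal $p_0>N_S$, contradicting the definition of $N_S$. Hence $\nu_p\in\Irr(S)$ for all $p\ge N_S+1$, and counting the bicommutative sub-monomials $[u;v]$ of $\nu_p$ with supports in $\{x_1,\dots,x_s\}$ and $\{x_{j_1},\dots,x_{j_t}\}$ (both non-empty) yields $\Theta(p^{s+t})=\Theta(p^k)$ elements of $\Irr(S)$ of length $\le 2kp$, whence $\gkd{\BC(X|S)}\ge k$.

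The main obstacle I expect is the edge case $s=0$ or $t=0$: then one side of $\nu_p$ would be empty, violating the requirement $u,v\in[X]^+$ in $\NF(X)\sm X$. The remedy is to freeze that side as the original $u_0$ (respectively $v_0$), which is non-empty with all exponents already $\le N_S$, and let only the other side grow. The minimal-counterexample argument transfers essentially verbatim: the frozen side contributes only exponents $\le N_S$, so the coordinate of $\overline{s}$ that must equal $p_0$ necessarily lives on the growing side, and the $N_S$-contradiction still fires. Counting as before yields polynomial growth of degree $k$, and combining the upper and lower bounds gives $\gkd{\BC(X|S)}=k=\Sd(\Irr(S))$.
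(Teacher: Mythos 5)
Your proposal is correct and follows essentially the same route as the paper's proof: the case $\Sd(\Irr(\SS))=0$, the upper bound from $\Irr(\SS)\subseteq\{\mu\mid\Sd(\mu)\le\Sd(\Irr(\SS))\}$, and the lower bound obtained from a witness monomial by using closure of $\Irr(\SS)$ under division together with the minimal-counterexample argument against the definition of $\NNN_{\SS}$. The only (immaterial) difference is in the degenerate case $p=0$ or $q=0$: the paper freezes the light side to a single letter $\xx_t$ from its support, while you freeze the entire original light side, which works just as well.
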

\begin{proof}
Assume $\XXX=\{\xx_1,\dots,\xx_{\rr}\}$.
Define~$\MMM=\Sd(\Irr(\SS))$.
If $\MMM=0$, then we have~$\Irr(\SS)\subseteq
X\cup \{[\xx_1^{n_1}\cdots\xx_{\rr}^{n_{\rr}};
\xx_1^{m_1}\cdots\xx_{\rr}^{m_{\rr}}]\in\NF(\XXX)\sm\XXX\mid n_i,m_j\leq N_S, 1\leq i,j\leq r\}$. It follows that $\gkd{\BC(\XXX|S)}=0=\Sd(\Irr(\SS)\sm\XXX)$.

Now we suppose~$\MMM\geq 1$. Since $\MMM=\Sd(\Irr(\SS))$, for every $\mu\in \Irr(\SS)\sm \XXX\subseteq\NF(\XXX)\sm \XXX$, we have $\Sd(\mu)\leq\MMM$.
Therefore, we have
$$\Irr(\SS)\subseteq\{\mu\in\NF(\XXX)\mid \Sd(\mu)\leq\MMM\}. $$
It follows immediately  that we have
\begin{equation}\label{b1}
\gkd{\BC(\XXX|\SS)}\leq M.
\end{equation}
Moreover, since~$\Sd(\Irr(S)\sm \XXX)=M$, there exists some monomial~$\mu\in\Irr(\SS)\sm \XXX$ such that~$\Sd(\mu)=\MMM$.
Without loss of generality, we assume $\mu=[\xx_1^{n_1}\cdots\xx_{\rr}^{n_{\rr}};\xx_1^{m_1}\cdots\xx_{\rr}^{m_{\rr}}]$, where~$\nn_{\ii_1},\dots,\nn_{\ii_p},\mm_{\jj_1},\dots,\mm_{\jj_q}>\NNN_{\SS}$,
 $p+q=\MMM$, $ p,q\in\mathbb{N}$,  $\nn_{l},\mm_{l'} \in \{0,1\}$ for all~$l\in \{1\wdots r\}\setminus\{i_1\wdots i_p\} $ and~$l'\in \{1\wdots r\}\setminus\{j_1\wdots j_q\}$.
There are three cases to consider.

  If $q=0$, then we have~$p=M$. Without loss of generality,  we may assume $$\mu=[\xx_1^{n_1}\cdots\xx_{\rr}^{n_{\rr}};
 \xx_1^{m_1}\cdots\xx_{\rr}^{m_{\rr}}],$$ where~${\nn_1,\dots,\nn_{_{\MMM}}>\NNN_{\SS}}$, and there exists some integer~$t\in\{1,...,\rr\}$ such that $\mm_t\geq 1$.
 For every monomial~$\nu\in \Irr(\SS)$ and for every monomial~$\nu'\in \NF(\XXX)$ with~$\nu'|\nu$, it is clear that~$\nu'$ also lies in~$\Irr(\SS)$. So we deduce $$[\xx_{1}^{\NNN_{\SS}+1}\cdots\xx_{_{\MMM}}^{\NNN_{\SS}+1};
 \xx_t]\in\Irr(\SS).$$
 Now we show that
 $$\Irr(\SS)\supseteq \{[\xx_1^{l_1}\cdots\xx_{_{\MMM}}^{l_{_{\MMM}}};\xx_t]
 \in\NF(\XXX)\sm \XXX
 \mid l_{1},\dots,l_{_{\MMM}}\geq 0\}.$$
 It suffices to show that $[\xx_{1}^{\nn}\cdots\xx_{_{\MMM}}^{\nn};\xx_t]$ belongs to $\Irr(\SS)$ for every $\nn\geq\NNN_{\SS}+1$.
Suppose for the contrary that   $[\xx_{1}^{\nn}\cdots\xx_{_{\MMM}}^{\nn};\xx_t]\notin\Irr(\SS)$ for some~$\nn>\NNN_{\SS}+1$.
Then we define
$$\nn_0=\minn\{\nn\mid [\xx_{1}^{\nn}\cdots\xx_{_{\MMM}}^{\nn};\xx_t]\notin\Irr(\SS)\}.$$
Clearly, we have
$$\nn_0>\NNN_{\SS}+1$$
and $[\xx_{1}^{\nn_0-1}\cdots\xx_{_{\MMM}}^{\nn_0-1};\xx_t]\in\Irr(\SS)$.
Therefore,  there exists some element~$\ss\in\SS$ such that we have~$\bar\ss|[\xx_{1}^{\nn_0}\cdots\xx_{_{\MMM}}^{\nn_0};\xx_t]$ and $\bar\ss\nmid [\xx_{1}^{\nn_0-1}\cdots\xx_{_{\MMM}}^{\nn_0-1};\xx_t]$.
By the definition of~$\NNN_\SS$, we deduce~$\NNN_\SS\geq\nn_0$,
which contradicts the fact that~$\nn_0>\NNN_{\SS}+1$.
So we have
$$\{[\xx_1^{l_1}\cdots\xx_{_{\MMM}}^{l_{_{\MMM}}};\xx_t]
 \in\NF(\XXX)\sm \XXX
 \mid l_{1},\dots,l_{_{\MMM}}\geq 0\} \subseteq\Irr(\SS).$$
Therefore, if~$q=0$, then there exist~$M$ distinct letters~$x_{i_1}\wdots x_{i_M}\in X$, and~$x_t\in X$ such that
\begin{equation} \label{q0}
\{[\xx_{i_1}^{l_1}\cdots\xx_{i_{\MMM}}^{l_{_{\MMM}}};\xx_t]
 \in\NF(\XXX) \sm \XXX
 \mid l_{1},\dots,l_{_{\MMM}}\geq 0\} \subseteq \Irr(S).
 \end{equation}
 If $p=0$, then similarly there exist~$M$ distinct letters~$x_{i_1}\wdots x_{i_M}\in X$, and~$x_t\in X$ such that
 \begin{equation}\label{p0}
 \{[\xx_t;\xx_{i_1}^{l_1}\cdots\xx_{i_{\MMM}}^{l_{_{\MMM}}}]
\in\NF(\XXX) \sm \XXX
\mid l_{1},\dots,l_{_{\MMM}}\geq0\}\subseteq\Irr(\SS).
\end{equation}

  If~$p,q\geq 1$, then by a similar proof as for the case when~$q=0$, we can show that
\begin{equation}\label{pq}
\{[\xx_{\ii_1}^{\nn_{\ii_1}}\cdots\xx_{\ii_p}^{\nn_{\ii_p}};
\xx_{\jj_1}^{\mm_{\jj_1}}\cdots\xx_{\jj_q}^{\mm_{\jj_q}}]
\in\NF(\XXX)\sm \XXX \mid \nn_{\ii_1}\wdots \mm_{\jj_q}\geq 0\}\subseteq\Irr(\SS).
\end{equation}
Combining~\eqref{q0},\eqref{p0} and \eqref{pq}, we have~$\gkd{\BC(\XXX|\SS)}\geq \MMM$. Finally,  by~\eqref{b1},  we obtain~$\gkd{\BC(\XXX|\SS)}=\MMM$.
\end{proof}

\begin{theorem}\label{GKBC}
Let  $\XXX=\{\xx_1,\dots,\xx_{\rr}\}$ be a nonempty finite set and let~$\MB=\BC(\XXX|\SS)$, where~$\SS$ is a finite \gsb\ for $\BC(\XXX|S)$. Then we have
$\gkd{\mB}=\Sd(\{ \mu \mid \mu=[\xx_{1}^{\nn_{1}}\cdots\xx_{\rr}^{\nn_{\rr}};
\xx_{1}^{\mm_{1}}\cdots\xx_{\rr}^{\mm_{\rr}}]\in \Irr(\SS)\sm\XXX,  \nn_i,\mm_j\in\{0,1, \NNN_{\SS}+1\}, i,j\in\{1\wdots r\}\}).$
In particular,  the Gelfand-Kirillov dimension of a finitely generated bicommutative algebra is a nonnegative integer.
\end{theorem}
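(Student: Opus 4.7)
The plan is to parallel Theorem~\ref{thm-ca}: combine Lemma~\ref{lemma-GKBC} with an ``exponent collapse'' argument showing that the maximum of $\Sd$ over the finite set described in the statement already realises the maximum over all of $\Irr(\SS)$. Denote the finite set by $\BB$ and set $\MMM = \Sd(\Irr(\SS))$, $\MMM' = \Sd(\BB)$. Since $\BB \subseteq \Irr(\SS)$, trivially $\MMM' \leq \MMM$, so by Lemma~\ref{lemma-GKBC} it is enough to prove $\MMM \leq \MMM'$; the case $\MMM = 0$ is immediate, so assume $\MMM \geq 1$ and choose $\mu \in \Irr(\SS)$ with $\Sd(\mu) = \MMM$. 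Since letters have $\Sd = 0$, we have $\mu \in \NF(\XXX)\setminus \XXX$, so $\mu = [\xx_1^{\nn_1}\cdots \xx_\rr^{\nn_\rr};\, \xx_1^{\mm_1}\cdots \xx_\rr^{\mm_\rr}]$. Let $\{i_1,\dots,i_\pp\}$ collect the indices with $\nn_{i_k} > \NNN_\SS$ and $\{j_1,\dots,j_\qq\}$ those with $\mm_{j_k} > \NNN_\SS$, so that $\pp + \qq = \MMM$.

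I would then construct a witness $\mu' \in \BB$ with $\Sd(\mu') = \MMM$ by normalising each exponent exceeding $\NNN_\SS$ to $\NNN_\SS + 1$ and collapsing the remaining positive exponents to $0$ or $1$, taking care to keep both sides of the bracket nonempty so that $\mu'$ stays in $\NF(\XXX) \setminus \XXX$. Three cases arise. \textbf{(a)} If $\pp, \qq \geq 1$, put $\mu' = [\xx_{i_1}^{\NNN_\SS + 1}\cdots \xx_{i_\pp}^{\NNN_\SS + 1};\, \xx_{j_1}^{\NNN_\SS + 1}\cdots \xx_{j_\qq}^{\NNN_\SS + 1}]$. \textbf{(b)} If $\pp = 0$, then $\qq = \MMM \geq 1$ and, because the left word of $\mu$ is nonempty, some $\nn_i \geq 1$; put $\mu' = [\xx_i;\, \xx_{j_1}^{\NNN_\SS + 1}\cdots \xx_{j_\qq}^{\NNN_\SS + 1}]$. \textbf{(c)} The case $\qq = 0$ is symmetric.

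In every case $\mu' \mid \mu$ in the $\NF(\XXX)$-sense: the left word of $\mu'$ divides that of $\mu$ (using $\nn_{i_k} \geq \NNN_\SS + 1$, or $\nn_i \geq 1$ in case (b)) and similarly on the right. Invoking the closure of $\Irr(\SS)$ under $\NF(\XXX)$-divisibility---already used in the proof of Lemma~\ref{lemma-GKBC} and justified because a normal $\SS$-polynomial witnessing $\mu' \notin \Irr(\SS)$ could be extended by left- and right-multiplying by the missing letters to witness $\mu \notin \Irr(\SS)$---one obtains $\mu' \in \Irr(\SS)$. All exponents of $\mu'$ lie in $\{0, 1, \NNN_\SS + 1\}$ and $\mu' \notin \XXX$, so $\mu' \in \BB$ with $\Sd(\mu') = \pp + \qq = \MMM$. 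Thus $\MMM' \geq \MMM$, and together with Lemma~\ref{lemma-GKBC} this yields $\gkd{\mB} = \Sd(\BB)$; since $\BB$ is finite and $\Sd$ is integer-valued, the integrality statement follows. The main obstacle I expect is the bookkeeping for the degenerate cases (b) and (c), where one must simultaneously keep $\mu'$ inside $\NF(\XXX) \setminus \XXX$ and preserve the divisibility $\mu' \mid \mu$ despite artificially replacing an exponent $\leq \NNN_\SS$ by $1$; the rest of the argument is a routine adaptation of Theorem~\ref{thm-ca}.
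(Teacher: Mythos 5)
Your proposal is correct and matches the paper's argument: the trivial inequality $\MMM'\leq\MMM$, reduction via Lemma~\ref{lemma-GKBC}, and an exponent-collapsing witness in the finite set $B$ that divides the maximizer $\mu$ and hence lies in $\Irr(\SS)$ by closure under divisibility. The only (immaterial) difference is the collapsing rule: the paper sends intermediate exponents in $\{1,\dots,\NNN_\SS\}$ to $1$ uniformly, whereas you drop them to $0$ except when needed to keep a side of the bracket nonempty.
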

\begin{proof}
Define~$$\MMM'=\Sd(\{ \mu \mid \mu=[\xx_{1}^{\nn_{1}}\cdots\xx_{\rr}^{\nn_{\rr}};
\xx_{1}^{\mm_{1}}\cdots\xx_{\rr}^{\mm_{\rr}}]\in \Irr(\SS)\sm\XXX,  \nn_i,\mm_j\in\{0,1, \NNN_{\SS}+1\}, 1\leq i,j\leq r\})$$
and
$$\MMM=\Sd(\Irr(\SS)).$$
Clearly we have~$\MMM'\leq\MMM$. By Lemma~\ref{lemma-GKBC},  it suffices to show~$\MMM\leq \MMM'$.

If $\MMM'=0$, then we have
 $$\Irr(\SS)\subseteq \XXX\cup \{[\xx_{1}^{\pp_{1}}\cdots\xx_{\rr}^{\pp_{\rr}};
\xx_{1}^{\qq_{1}}\cdots\xx_{\rr}^{\qq_{\rr}}] \in \NF(\XXX)\sm \XXX \mid \pp_{i},\qq_{j}\leq \NNN_{\SS}, 1\leq i,j\leq r\}.$$
Then it follows immediately that we have~$M=\gkd{\BC(\XXX|\SS)}=0=M'$.

If $\MMM'>0$, then we have~$M\geq M'>0$, and thus  there exists some monomial~$\nu=[\xx_{1}^{\nn_{1}}\cdots\xx_{\rr}^{\nn_{\rr}};
\xx_{1}^{\mm_{1}}\cdots\xx_{\rr}^{\mm_{\rr}}]\in\Irr(\SS)\sm \XXX$ satisfying~$\Sd(\nu)=\MMM>0$. By the construction of~$\Irr(S)$, every monomial~$\nu'$ satisfying~$\nu'|\nu$ lies in~$\Irr(S)$.
 For every~$i\in\{1\wdots r\}$, we define
\begin{eqnarray*}
n'_i=
\begin{cases}
N_S+1,& \mbox{if } n_i>N_S,\\
1,& \mbox{if } n_i\in\{1\wdots N_S\},\\
0,& \mbox{if } n_i=0;
\end{cases}
\ \ \ \ \ \ \ \ \
m'_i=
\begin{cases}
N_S+1,& \mbox{if } m_i>N_S,\\
1,& \mbox{if } m_i\in\{1\wdots N_S\},\\
0,& \mbox{if } m_i=0.
\end{cases}
\end{eqnarray*}
Then it follows that we have
$$\Sd(\nu')=M$$
and
$$\nu'=[\xx_{1}^{\nn_{1}'}\cdots\xx_{\rr}^{\nn_{\rr}'};
\xx_{1}^{\mm_{1}'}\cdots\xx_{\rr}^{\mm_{\rr}'}]\in B,$$
where  $$
 B:=\{ \mu \mid \mu=[\xx_{1}^{\nn_{1}}\cdots\xx_{\rr}^{\nn_{\rr}};
\xx_{1}^{\mm_{1}}\cdots\xx_{\rr}^{\mm_{\rr}}]\in \Irr(\SS)\sm\XXX,  \nn_i,\mm_j\in\{0,1, \NNN_{\SS}+1\}, i,j\in\{1\wdots r\}\}.
 $$
Therefore we have
 $\MMM'\geq\Sd(\nu)=\MMM$.
 The proof is completed.
\end{proof}
 Theorem~\ref{GKBC} offers a fast algorithm for calculating the Gelfand-Kirillov dimension of a finitely presented bicommutative algebra by investigating the finite set
$$\{ \mu\mid \mu=[\xx_{1}^{\nn_{1}}\cdots\xx_{\rr}^{\nn_{\rr}};
\xx_{1}^{\mm_{1}}\cdots\xx_{\rr}^{\mm_{\rr}}]\in\Irr(\SS)\setminus X,  \nn_i,\mm_j\in\{0,1, \NNN_{\SS}+1\}, i,j\in\{1\wdots r\}\}$$
rather than~$\Irr(\SS)$.

\begin{exam}
Let  $\XXX=\{\xx_1,\dots,\xx_{\rr}\}$ be a nonempty finite set. Then  the Gelfand-Kirillov dimension of the free bicommutative algebra $\BC(X)$ generated by $X$ is $2r$.
\end{exam}
\begin{proof}
As a special case of Theorem~\ref{GKBC}, say $\XXX=\{\xx_1,\dots,\xx_{\rr}\}$ and $\SS=\emptyset$, then we
have~$\NNN_S=1$ and $[\xx_{1}^{2}\cdots\xx_{\rr}^{2};
\xx_{1}^{2}\cdots\xx_{\rr}^{2}]\in \Irr(\SS)$, and thus $\gkd{\BC(X)}=2r$.
\end{proof}

\begin{coro}\label{biporp-1}
Let  $\XXX=\{\xx_1,\dots,\xx_{\rr}\}$ be a nonempty finite set.
Let $\SS$ be a nonempty finite subset of~$\NF(\XXX)\sm\XXX$ and~$\mu\in\NF(\XXX)\sm\XXX$. Suppose that~$\gkd{\BC(\XXX|\SS)}=n\geq 3$.
Let
$$B=\{ \mu'\mid \mu'=[\xx_{1}^{\nn_{1}}\cdots\xx_{\rr}^{\nn_{\rr}};
\xx_{1}^{\mm_{1}}\cdots\xx_{\rr}^{\mm_{\rr}}]\in\NF(\XXX)\setminus X,  \nn_i,\mm_j\in\{0,1, \NNN_{\SS}+1\}, i,j\in\{1\wdots r\}\}.$$
If there exists some monomial~$\nu\in B\cap \Irr(\SS\cup\{\mu\})$ with~$\Sd(\nu)=n$, then we have
$$
\gkd{\BC(\XXX|\SS\cup\{\mu\})}=\gkd{\BC(\XXX|\SS)};
$$
Otherwise, we have
$$
\gkd{\BC(\XXX|\SS\cup\{\mu\})}=\gkd{\BC(\XXX|\SS)}-1.
$$
\end{coro}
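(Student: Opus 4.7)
The plan is to apply Theorem~\ref{GKBC} to both $\BC(\XXX|\SS)$ and $\BC(\XXX|\SS\cup\{\mu\})$ and to compare the resulting maxima of $\Sd$ over the common finite test set $B$. A preliminary step is to verify that any monic subset of $\NF(\XXX)\setminus\XXX$ is automatically a Gr\"obner--Shirshov basis: left and right multiplication compositions do not arise because every leading monomial has length $\geq 2$; multiplication compositions do not arise either, since each polynomial already belongs to $\Span_\kk(\NF(\XXX)\setminus\XXX)$; inclusion compositions vanish because each polynomial coincides with its own leading monomial; and each intersection composition reduces to $\lcm(\bar\ff,\bar\gg)-\lcm(\bar\ff,\bar\gg)=0$. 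Hence both $\SS$ and $\SS\cup\{\mu\}$ are Gr\"obner--Shirshov bases, and Theorem~\ref{GKBC} yields $\gkd{\BC(\XXX|\SS)}=\Sd(B\cap\Irr(\SS))=n$ and $\gkd{\BC(\XXX|\SS\cup\{\mu\})}=\Sd(B\cap\Irr(\SS\cup\{\mu\}))\leq n$, where a monomial $\nu\in B\cap\Irr(\SS)$ leaves $\Irr(\SS\cup\{\mu\})$ precisely when $\mu\mid\nu$.

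If some $\nu\in B\cap\Irr(\SS\cup\{\mu\})$ has $\Sd(\nu)=n$, the upper bound is attained and the two dimensions agree. Otherwise every $\nu\in B\cap\Irr(\SS)$ with $\Sd(\nu)=n$ satisfies $\mu\mid\nu$, and it suffices to exhibit some $\nu'\in B\cap\Irr(\SS\cup\{\mu\})$ with $\Sd(\nu')=n-1$. Fix one such $\nu=[u_\nu;v_\nu]$ and write $\mu=[a;b]$, so that $a\mid u_\nu$ and $b\mid v_\nu$ in $[\XXX]^+$.

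The construction rests on the following rigidity, which I would establish first: for each letter $x$ appearing in $a$, either $x$ has exponent $\NNN_\SS+1$ in $u_\nu$, or $u_\nu$ is a power of the single letter $x$; the analogous statement holds for $b$ and $v_\nu$. Indeed, if some $x$ in $a$ had exponent $1$ in $u_\nu$ while $u_\nu$ contained another letter, zeroing that exponent would produce $\nu''\in B$ with $\nu''\mid\nu$, $u_{\nu''}$ nonempty, $\mu\nmid\nu''$ and $\Sd(\nu'')=\Sd(\nu)=n$, contradicting the standing hypothesis. Given this rigidity, if $u_\nu$ has at least two distinct letters, pick any $x$ in $a$: its exponent in $u_\nu$ is $\NNN_\SS+1$, and zeroing it yields $\nu'=[u_{\nu'};v_\nu]\in B$ with $u_{\nu'}$ nonempty, $\nu'\mid\nu$, $\mu\nmid\nu'$ and $\Sd(\nu')=n-1$. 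Otherwise $u_\nu$ is a single letter, so $\Sd(u_\nu)\leq 1$ and $\Sd(v_\nu)\geq n-1\geq 2$, forcing $v_\nu$ to contain at least two distinct letters; the symmetric construction on the $v$-side then produces the desired $\nu'$.

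The main obstacle is the ``emptying'' issue in the final step: zeroing an exponent could in principle empty its side of $\nu$ and push the resulting symbol outside $\NF(\XXX)$. The hypothesis $n\geq 3$ is used precisely to rule out the scenario where $u_\nu$ and $v_\nu$ are both single letters, since in that case the symmetric rigidity forces $\Sd(\nu)\leq 2$.
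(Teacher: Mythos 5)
Your proposal is correct and follows the same route as the paper's own proof: both reduce the statement to Theorem~\ref{GKBC} after observing that monomial subsets of $\NF(\XXX)\sm\XXX$ are automatically \gsbs, and in the ``otherwise'' branch both produce the $\Sd$-value $n-1$ witness by deleting from a maximal monomial $\nu\in B\cap\Irr(\SS)$ one letter belonging to the content of $\mu$, with $n\geq 3$ guaranteeing that the affected side stays nonempty. The one place where you genuinely add something is your ``rigidity'' step: the paper simply writes ``without loss of generality, we assume $p>1$ and suppose $\ii_1=1$, $\ii_2=2$ and $[\xx_1;\xx_t]\mid\mu$'', i.e.\ it takes for granted that a letter of $\mu$'s content can be found among the exponent-$(\NNN_\SS+1)$ letters on a side carrying at least two letters, whereas you prove this (if a letter of $\mu$ had exponent $1$ on a side containing another letter, deleting it would give an element of $B\cap\Irr(\SS\cup\{\mu\})$ with $\Sd=n$, contradicting the standing hypothesis), and you handle the residual single-letter-side case by passing to the other side. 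So your argument is the paper's argument with its ``without loss of generality'' actually justified. One shared caveat, not a defect of yours relative to the paper: your opening identity $\gkd{\BC(\XXX|\SS\cup\{\mu\})}=\Sd(B\cap\Irr(\SS\cup\{\mu\}))$ is not a literal instance of Theorem~\ref{GKBC}, because that theorem applied to $\SS\cup\{\mu\}$ uses $\NNN_{\SS\cup\{\mu\}}$, which can differ from $\NNN_\SS$ when $\mu$ contains an exponent exceeding $\NNN_\SS$; making this identification rigorous requires an extra inflation/deflation argument on the large exponents (or the assumption that every exponent of $\mu$ is at most $\NNN_\SS+1$), a point the paper's proof glosses over in exactly the same way.
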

\begin{proof}
Obviously, every nonempty subset of $\NF(\XXX)\sm\XXX$ is a \gsb\ in $\BC(\XXX)$ and thus~$S$ is a \gsb\ for~$\BC(\XXX|S)$.
 Then we have
 $$\gkd{\BC(\XXX|\SS)}=\Sd(B\cap\Irr(\SS))=n$$ for some positive integer~$n\geq 3$.

If there exists some monomial~$\nu\in B\cap \Irr(\SS\cup\{\mu\})$ with~$\Sd(\nu)=n$, then we have
$$
\gkd{\BC(\XXX|\SS\cup\{\mu\})}=n=\gkd{\BC(\XXX|\SS)}.
$$
If for every monomial~$\nu'\in B\cap \Irr(S)$ with~$\Sd(\nu')=n$, $\nu'$ does not lie in~$\Irr(\SS\cup\{\mu\})$,
then we have~$\mu|\nu'$.
Suppose~$\nu_0\in B\cap \Irr(S)$ and~$\Sd(\nu_0)=n$.
Then we may assume
$$\nu_0=[\xx_1^{n_1}\cdots\xx_{\rr}^{n_{\rr}};
\xx_1^{m_1}\cdots\xx_{\rr}^{m_{\rr}}]\in\Irr(\SS)\sm\XXX, $$ where~$\nn_{\ii_1}=\cdots=\nn_{\ii_p}=\mm_{\jj_1}=\cdots
=\mm_{\jj_q}=\NNN_{\SS}+1$,
$p+q=n$, $ p,q\in\mathbb{N}$, $\nn_{l},\mm_{l'} \in \{0,1\}$ for all~$l\in \{1\wdots r\}\setminus\{i_1\wdots i_p\} $ and~$l'\in \{1\wdots r\}\setminus\{j_1\wdots j_q\}$.
Since~$n\geq 3$, we have~$p>1$ or~$q>1$. Without loss of generality, we assume $p>1$ and suppose~$\ii_1=1$, $\ii_2=2$ and~$[\xx_{1};x_t]|\mu$ for some~$x_t\in X$. Then $[\xx_2^{n_2}\cdots\xx_{\rr}^{n_{\rr}};
\xx_1^{m_1}\cdots\xx_{\rr}^{m_{\rr}}]\in\Irr(\SS\cup\{\mu\})\sm\XXX$. Therefore, we have
$$
\gkd{\BC(\XXX|\SS\cup\{\mu\})}=n-1=\gkd{\BC(\XXX|\SS)}-1.
$$
The proof is completed.
\end{proof}

If we have~$\gkd{\BC(X|S)}=n>0$, then in general we cannot have a good estimation on~$\gkd{\BC(X|S\cup\{x\})}$, where~$x$ lies in~$X$. For instance, let~$X=\{x_1\wdots x_r\}\ (r>1)$ and let~$S=\{[x;y]\mid x,y\in X, x\neq x_1 \}$. Then we have~$N_S=1$ and $[x_1^{2};x_1^{2}...x_r^{2}]\in \Irr(S)$. Clearly, for every~$\mu\in \Irr(S)$, we have~$\Sd(\mu)\leq r+1$, so~$\gkd{\BC(X|S)}=r+1$. Moreover, we have~$\gkd{\BC(X|S\cup\{x_1\})}=0$ and~$\gkd{\BC(X|S\cup\{x_2\})}=r$.

The following example shows that if~$\gkd{\BC(\XXX|\SS)}=2$, then we cannot obtain a similar result as in Corollary~\ref{biporp-1}.
\begin{exam}
Let  $\XXX=\{\xx_1,\xx_2\}$ and
$\SS=\{[\xx_1;\xx_1],[\xx_1;\xx_2],[\xx_2;\xx_1]\}$.
Then we obtain that~$\NNN_S=1$.
Let~$B:=\{ \mu\mid \mu=[\xx_{1}^{\nn_{1}}\xx_{2}^{\nn_{2}};
\xx_{1}^{\mm_{1}}\xx_{2}^{\mm_{2}}]\in\NF(\XXX)\sm\XXX,  \nn_1,\nn_2,\mm_1,\mm_2\in\{0,1, 2\}\}$.
Clearly $\SS$ is a \gsb\ for $\BC(\XXX|\SS)$ and we have~$B\cap\Irr(\SS)=\{[\xx_2^2;\xx_2^2]\}$. Thus~$\gkd{\BC(\XXX|\SS)}=2$.
Let~$\nu=[\xx_2;\xx_2]$.
Then we have $B\cap\Irr(\SS\cup\{\nu\})=\emptyset$. Therefore
$$\gkd{\BC(\XXX|\SS\cup\{\nu\})}=0=\gkd{\BC(\XXX|\SS)}-2.$$
\end{exam}

The following proposition offers a rough estimation of the Gelfand-Kirillov dimension of a finitely presented bicommutative algebra.
\begin{prop}\label{GKBC2}
Let~$\XXX=\{\xx_1,\dots,\xx_{\rr}\}$ be a nonempty finite set and~$S=\{\ff_1\wdots\ff_t\}$ a finite \gsb\ for~$\BC(\XXX|\SS)$, where~$r>1$.
Then the following hold:

\ITEM1 If~$1\leq t\leq r$ and $\ov{\SS}\cap\XXX=\emptyset$, then we have
$$\gkd{\BC(\XXX|\SS)}\in\{ 2r-t\wdots 2r-1\};$$

\ITEM2 If~$t> r$, then we have
$$\gkd{\BC(\XXX|\SS)}\in \{0\wdots 2r-1\}.$$
\end{prop}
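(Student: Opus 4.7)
The plan is to mirror the proof of Proposition~\ref{gkc}, substituting Theorem~\ref{GKBC} and Corollary~\ref{biporp-1} for their commutative analogues Theorem~\ref{thm-ca} and Corollary~\ref{porp-1}. Part~\ITEM1 will be proved by induction on~$\tt$, using the fully saturated monomial $\mu_0:=[\xx_1^{\NNN_{\SS}+1}\cdots\xx_{\rr}^{\NNN_{\SS}+1};\xx_1^{\NNN_{\SS}+1}\cdots\xx_{\rr}^{\NNN_{\SS}+1}]$ to force the upper bound $2\rr-1$ at the base case and then appealing to Corollary~\ref{biporp-1} to control the drop at each successive step, while part~\ITEM2 is a one-line upper-bound computation.

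For the base case $\tt=1$ of part~\ITEM1, the hypothesis $\ov{\SS}\cap\XXX=\emptyset$ forces $\ov{\ff_1}=[\uu;\vv]$ with $\uu=\xx_1^{\nn_1}\cdots\xx_{\rr}^{\nn_{\rr}}$, $\vv=\xx_1^{\mm_1}\cdots\xx_{\rr}^{\mm_{\rr}}\in[\XXX]^+$, and then $\NNN_{\SS}=\max\{\nn_1,\ldots,\nn_{\rr},\mm_1,\ldots,\mm_{\rr}\}$. Since $\ov{\ff_1}$ divides $\mu_0$, Theorem~\ref{GKBC} gives $\gkd{\BC(\XXX|\SS)}\leq 2\rr-1$. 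Conversely, picking any index $i$ with $\nn_i>0$ (which exists because $\uu\in[\XXX]^+$), the monomial $[\prod_{k\neq i}\xx_k^{\NNN_{\SS}+1};\prod_{k=1}^{\rr}\xx_k^{\NNN_{\SS}+1}]$ is not a multiple of $\ov{\ff_1}$ (its left factor avoids $\xx_i$, while $\uu$ does not), lies in $\Irr(\SS)$, and has $\Sd$-value $2\rr-1$, so $\gkd{\BC(\XXX|\SS)}=2\rr-1$. For the inductive step $\tt\geq 2$, replacing each $\ff_i$ by $\ov{\ff_i}$ preserves $\Irr(\SS)$, so we may assume $\SS\subseteq \NF(\XXX)\sm\XXX$; every such set is automatically a \gsb. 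The induction hypothesis applied to $T=\{\ov{\ff_1},\ldots,\ov{\ff_{\tt-1}}\}$ yields $p:=\gkd{\BC(\XXX|T)}\in\{2\rr-\tt+1,\ldots,2\rr-1\}$. The constraints $\tt\leq \rr$ and $\rr>1$ give $p\geq \rr+1\geq 3$, so Corollary~\ref{biporp-1} applies to the extension of $T$ by $\ov{\ff_\tt}$ and produces $\gkd{\BC(\XXX|\SS)}\in\{p-1,p\}\subseteq\{2\rr-\tt,\ldots,2\rr-1\}$.

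For part~\ITEM2 the lower bound is trivial, and the upper bound follows as in the base case above: $\tt>\rr$ forces $\SS\neq\emptyset$, so by the definition of $\NNN_{\SS}$ some $\ov{\ss}\in\ov{\SS}$ divides $[\xx_1^{\NNN_{\SS}}\cdots\xx_{\rr}^{\NNN_{\SS}};\xx_1^{\NNN_{\SS}}\cdots\xx_{\rr}^{\NNN_{\SS}}]$ and hence also $\mu_0$; thus $\mu_0\notin\Irr(\SS)$ and Theorem~\ref{GKBC} delivers $\gkd{\BC(\XXX|\SS)}\leq 2\rr-1$. The main delicate point I anticipate is maintaining the hypothesis $n\geq 3$ of Corollary~\ref{biporp-1} throughout the induction in part~\ITEM1; the assumption $\rr>1$ is precisely what keeps the running bound $p\geq \rr+1\geq 3$, so the corollary can be iterated without suffering the drop-by-two pathology exhibited by the example preceding this proposition.
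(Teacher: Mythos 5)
Your proposal is correct and follows essentially the same route as the paper: induction on $t$ for \ITEM1 with the base case $t=1$ settled by Theorem~\ref{GKBC} (exhibiting a monomial of $\Sd$-value $2r-1$ in $\Irr(\SS)$ while $[\xx_1^{\NNN_\SS+1}\cdots\xx_r^{\NNN_\SS+1};\xx_1^{\NNN_\SS+1}\cdots\xx_r^{\NNN_\SS+1}]\notin\Irr(\SS)$), the inductive step passing to leading monomials and invoking Corollary~\ref{biporp-1} under the bound $2r-t+1\geq r+1\geq 3$, and \ITEM2 handled by the same excluded monomial. The only informal point is your inference in \ITEM2 that $\ov{\ss}\mid\mu_0$ forces $\mu_0\notin\Irr(\SS)$, which needs a word when $\ov{\ss}\in\XXX$ (there one uses that $\SS$ is a \gsb\ so the $\SS$-polynomial with leading monomial $\mu_0$ yields a normal $\SS$-polynomial with that leading monomial); the paper leaves this equally implicit.
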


\begin{proof}
\ITEM1
We use induction on~$t$.
If $t=1$, then we may assume~$\ff_1=[x_1^{\nn_1}\cdots x_r^{\nn_r};x_1^{\mm_1}\cdots x_r^{\mm_r}]$, where~$\nn_1>0$ and~$ \mm_i>0$ for some integer~$i\leq r$. Thus we obtain that
$$[x_2^{\NNN_S+1}\cdots x_r^{\NNN_S+1};x_1^{\NNN_S+1}\cdots x_r^{\NNN_S+1}]\in \Irr(S) \mbox{ and } [x_1^{\NNN_S+1}\cdots x_r^{\NNN_S+1};x_1^{\NNN_S+1}\cdots x_r^{\NNN_S+1}]\notin \Irr(S). $$
  Therefore by Theorem~\ref{GKBC}, we
have
$$\gkd{\BC(\XXX|\SS)}=2r-1.$$
Now we assume~$t>1$.
Obviously, $\{\ov{\ff_1}\wdots\ov{\ff_{t-1}}\}$ is a \gsb\ in $\BC(\XXX)$ and $\Irr(\SS)=\Irr(\ov{\SS})$.
By induction hypothesis, we have   $$p:=\gkd{\BC(\XXX|\{\ov{\ff_1}\wdots\ov{\ff_{t-1}}\})}\in\{ 2r-t+1 \wdots 2r-1\}.$$
Since~$2\leq t\leq r$, we have $2r-t+1\geq3$.
So by Corollary~\ref{biporp-1}, we have
$$\gkd{\BC(\XXX|\SS)}=\gkd{\BC(\XXX|\ov{\SS})}\in \{p-1,p\}\subseteq \{ 2r-t\wdots 2r-1\}.$$

\ITEM2
Since~$[x_1^{\NNN_S+1}\cdots x_r^{\NNN_S+1};x_1^{\NNN_S+1}\cdots x_r^{\NNN_S+1}]\notin \Irr(S)$, by Theorem \ref{GKBC}, the result follows immediately.
\end{proof}

\begin{exam} Let~$\XXX=\{\xx_1,\dots,\xx_{\rr}\}$, $r>1$ and let~$t$ be a positive integer.
For every positive integer~$\pp\leq\min \{r,t\}$,
we define $m=t-p$ and define
$$\SS=\{[\xx_i;\xx_i]\mid1\leq i\leq p\}\cup\{[\xx_{r}^{i};\xx_1^{3m-i}]\mid m\leq i\leq 2m-1\}.$$
Let~$B:=\{ [x_1^{\nn_1}\cdots x_r^{\nn_r};x_1^{\mm_1}\cdots x_r^{\mm_r}]\in\NF(\XXX)\sm\XXX \mid  \nn_i,\mm_j\in\{0,1, \NNN_S+1\}, i,j\in\{1\wdots r\}\}$.
Clearly~$\SS$ is a \gsb\ for $\BC(\XXX|\SS)$ with~$\#\SS=t$ and we have~$N_S=\max\{2m, 1\}$. Then we obtain
$$[\xx_{1}^{\NNN_S+1}\xx_{p+1}^{\NNN_S+1}\cdots\xx_r^{\NNN_S+1};
\xx_2^{\NNN_S+1}\cdots\xx_{r}^{\NNN_S+1}]\in B\cap\Irr(\SS),$$
and for every monomial~$\mu\in B$ with $\Sd(\mu)=2r-p+1$, $\mu$ does not lie in~$\Irr(\SS)$.
Thus we have
$$
\gkd{\BC(\XXX|\SS)}=2r-p.
$$
\end{exam}
\subsection*{Acknowledgment}
The authors are grateful to L. A. Bokut and Yu Li for bringing the topic of bicommutative algebras to our attention.

\subsection*{Disclosure statement}
No potential conflict of interest was reported by the authors.

\subsection*{Funding}
Y.C. is supported by the NNSF of China (11571121, 12071156); Z.Z. is supported by the fellowship of China Postdoctoral Science Foundation 2021M691099 and the Young Teacher Research and Cultivation  Foundation of South China Normal University 20KJ02.

\end{document}